%

%
\documentclass[10pt]{amsart}
\usepackage{amssymb,amsmath}
\usepackage{hyperref}

\usepackage{bm}
\usepackage[]{graphicx}
\usepackage{color}
\usepackage{amssymb, epsfig}
%
%
%
%
%

%
\numberwithin{equation}{section}

\pagestyle{plain}
\setlength{\textheight}{21.0truecm}
\setlength{\textwidth}{16.5truecm}
\setlength{\oddsidemargin}{0.00truecm}
\setlength{\evensidemargin}{0.00truecm}
%
%

%
%
%




\numberwithin{figure}{section}
%
%

%
%

%
%

%
%





\newtheorem{thm}{Theorem}[section]
\newtheorem{lem}[thm]{Lemma}

\newtheorem{defin}[thm]{Definition}
\newtheorem{andefin}[thm]{Ansatz}

\newtheorem{rem}[thm]{Remark}

%
%

%

\newcommand\R{{\mathbb R}}

\newcommand\1{{1\kern-.25em\hbox{\rm I}}}
\newcommand\eu{{1\kern-.25em\hbox{\sm I}}}


\newcommand\FF{{\mathcal F}}

\newcommand\LL{{\mathcal L}}

\newcommand\NN{{\mathcal N}}
\newcommand\MM{{\mathcal M}}


 \newcommand\e{\varepsilon}
\newcommand\eps{\varepsilon}

\newcommand\s{\sigma}

\def\dse{{\rm d}S_{\eta}}
\def\dsx{{\rm d}S_{\xi}}
\def\NExt{{\mathcal E}_{\G,N}}

\def\NExt{{\mathcal E}_{\G,N}}

\def \sG { \lower 8pt \hbox {$\scriptstyle \G$}}
\def\d{{\delta}}
\def\t{{\tau}}

\newcommand\G{\Gamma}
\newcommand\Om{\Omega}

  \newcommand{\eightpoint}{\scriptstyle }
\newcommand{\nada}[1]{}

\begin{document}
%
%
%
%
%
\title[Sharp interface limit for the generalized Cahn-Hilliard]
{A Hilbert expansions method for the rigorous sharp interface
limit of the generalized Cahn-Hilliard Equation}
\author[Antonopoulou, Karali, Orlandi]{D.C.~Antonopoulou$^{\dag\ddag}$,
G.D.~Karali$^{\dag\ddag}$, E. Orlandi$^*$}
%
%
%
%
%
%
\thanks
{$^*$ Dipartimento di Matematica e Fisica, Universit\'a   di Roma Tre, L.go S. Murialdo 1, 00146 Roma, Italy.   orlandi@mat.uniroma3.it}
\thanks
{$^{\dag}$ Department of Applied Mathematics, University of Crete, 
GR--714 09 Heraklion, Greece. gkarali@tem.uoc.gr, danton@tem.uoc.gr}
\thanks
{$^{\ddag}$ Institute of Applied and Computational Mathematics,
FO.R.T.H., GR--711 10 Heraklion, Greece. } 
%
%
%

\subjclass{}
%
%
\begin{abstract}
We consider   Cahn-Hilliard equations with  external forcing terms.  Energy decreasing and mass conservation  might not hold. We show that level surfaces of  the solutions of such  generalized   Cahn-Hilliard equations   tend to the solutions of a moving boundary problem under the assumption that classical solutions of the latter exist.  Our strategy is to     construct  approximate solutions of the generalized Cahn-Hilliard equation
  by the  Hilbert expansion  method used in kinetic
theory and  proposed for the standard Cahn-Hilliard equation, by
Carlen, Carvalho and Orlandi, \cite {CCO}. The constructed  approximate solutions allow to  derive rigorously the  sharp interface limit of the generalized   Cahn-Hilliard equations. 
We then estimate the difference between the true solutions and the approximate solutions by spectral analysis, as in \cite {A-B-C}.

\end{abstract}
\maketitle \textbf{Keywords:} {\small{Cahn-Hilliard equation,
forcing, sharp interface limit, Hilbert expansion.}}
%
%
\pagestyle{myheadings}
\thispagestyle{plain}
%
%
%
\section{Introduction}\label{sec1}
    In this paper, we apply an alternative method to matched asymptotic expansions, developed  by  Carlen,
Carvalho and Orlandi, in  \cite {CCO}, which allows  the study of
the sharp interface limit for the generalized  Cahn-Hilliard
equation, and derive higher order corrections to this limit. The
method is based on the Hilbert expansion used in kinetic theory;
we refer to \cite {CCO} where the analogy is explained.   We start
by recalling some back ground regarding the  Cahn-Hilliard equation
and  the results obtained in   \cite {CCO}.
 
\subsection{The  Cahn-Hilliard Equation}\label{subsec1}  Let  $\Omega$  be  a
bounded domain in $\mathbb{R}^2$. The restriction of the analysis
to two dimensions is made only for simplicity.
 Let $m=m(x,t)$ be an integrable function on $\Om$ which
represents the value of a {\it conserved ``order parameter''} at
$x$ in $\Om$ at time $t$. The order parameter is conserved in the
sense that $\int_{\Om} m(x,t){\rm d}x$ is independent of $t$.
Therefore, the evolution equation for $m$ can be written in the
form
\[
  \partial_t m(x,t) = \nabla \cdot \vec J(x,t),
\]
where the {\it current} $\vec J$ is orthogonal to the outer normal
of the boundary of $\Om$. We take
\[
\vec J(x,t) = \sigma(m(x,t))\nabla \mu(x,t),
\]
where  $\sigma(m)$ is the {\it mobility} and $\mu(x,t)$ is the
{\it chemical potential} of $x$ at time $t$. The mobility is
positive and the chemical potential is defined as the $L^2(\Om)$
Frechet derivative of a {\it free energy functional}
 $\FF$:

$$
\mu(x) = \frac {\d \FF} {\d m}(x).
$$
 The simplest and most familiar example is the so called Cahn--Hilliard
equation. It results by setting $\sigma(m) := 1$, i.e. constant
mobility, and
\[\FF (m) :=\frac 12 \int_\Om |\nabla m(x)|^2{\rm d}x +
\frac 14 \int_\Om (m^2(x) -1)^2{\rm d}x.
\]
This leads to the evolution equation
\[
 \partial_t m(x,t) = \Delta\left(-\Delta m(x,t) +
f(m(x,t))\right),
\]
where
\begin{equation}
\label{L.1} f(m) = m^3 - m.
\end{equation}
Different choices of $f$ can be made, provided they are
derivatives of a double well smooth enough potential with equal
absolute minima. If $m(x,t)$ is a solution of this equation, then
\[
{{\rm d} \over {\rm d} t} \FF (m(\cdot, t)) = -\int_\Om  |\vec J(x,t)|^2{\rm d}x,
\]
and thus, evolution  decreases the free energy. The minimizers of
the free energy are the constant functions $m = \pm 1$. These
minimizers represent the ``pure phases'' of the system. However,
unless the initial condition $m_0$ happens to satisfy $\int_\Om
m_0(x){\rm d}x = \pm |\Om|$, these ``pure phases'' cannot be
reached due to the mass conservation law. Instead, what will
eventually be produced is a region in which $m \approx +1$ while
$m \approx -1$ in its complement, with smooth transition across
its boundary. This phenomenon is referred to as {\it phase
segregation}, where the aforementioned boundary consists the {\it
interface} between the two phases. If we ``stand far enough
back'' from $\Om$, all we can observe is the interface's shape
since the structure across the interface is placed on an invisibly
small scale.

The evolution of $m$ in time under the Cahn Hilliard equation, or
another equation of this type,  drives a very slow evolution of
the interface. More specifically, let $\e$ be a small parameter,
and introduce the new variables $\tau$ and $\xi$ by
\[
\tau := \e^3t \qquad{\rm and}\qquad \xi := \e x .
\]
Then of course it follows
\[
 \partial_t = \e^3   \partial_{\tau}
\qquad{\rm and}\qquad \partial_x = \e
 \partial_\xi\ .
\]
Hence, if $m(x,t)$ is a solution of the Cahn--Hilliard equation
and we define $m^\e(\xi,\tau) := m(x(\xi),t(\tau))$ then we obtain
\begin{equation}
\label{ch0}
 \partial_\tau  m^{\e} (\xi,\tau)    =
\Delta_\xi\left(-\e\Delta_\xi
m^\e(\xi,\t)
   +\frac 1 \e  f(m^{\e}  (\xi,\t) ) \right) .
\end{equation}
 If we think of $\e$ as representing the inverse of a large length
scale,  the variable $\xi$ will be dimensionless.  The
dimensionless variables are ``slow'' and the original variables
``fast'' for small $\e$. In what follows, we keep the notation
$\xi$ for the slow spatial variables, but we drop the use of
$\tau$ and replace it by $t$ for convenience. One should just
bear in mind that now we are looking at the evolution over a {\it
very} long time scale when $\e$ is small. For the reasons
indicated above, it is custom to  consider initial data $m_0(\xi)$
that is $-1$ in the region bounded by a smooth closed curve
$\G_0$ in $\Om$, and $+1$ outside this region.   At  later times
$t$ there will still be
 a fairly sharp interface between a region where $m(\xi,t) \approx +1$
and  a region where $m(\xi,t) \approx -1$, centered on a smooth
curve $\G_t$. One might hope that for small values of $\e$, {\it
all information about the evolution of $m^\e(\xi,t)$ is contained
in the evolution of the interface $\G_t$}. This is indeed the
case as shown in \cite {CCO}. To explain the method used in \cite {CCO}, let ${\mathcal M}$
denote the set of all smooth simple closed curves in $\Om$. As we
will explain in Section 2,   ${\mathcal M}$ can be viewed as a
differentiable manifold.  A
 vector field $V$ on ${\mathcal M}$ is a functional associating to each
$\G$ in ${\mathcal M}$ a function in $C^\infty(\G)$. This function gives
the normal velocity of a point on $\G$, and thus describes a
``flow'' on ${\mathcal M}$. We may formally write
\begin{equation}
\label{flow1a} {{\rm d}\over {\rm d}t}\G_t = V(\G_t) .
\end{equation}
Now, given a flow on ${\mathcal M}$, we can produce from it an
evolution in $C^\infty(\Om)$ through the following device: Let
$m$ be any function from ${\mathcal M}$ to  $C^\infty(\Om)$. We
write $m(\xi,\G)$ to denote $m(\G)$ evaluated at $\xi\in \Om$. We
can then define a time dependent function $m(\xi,t)$ on $\Om$ by
\begin{equation}
\label{flow2a} m(\xi,t) := m(\xi,\G_t) .
\end{equation}
Notice that time dependence in $m(\xi,t)$ enters {\it only} through
the evolution of $\G_t$.
 Now if, for small $\e$ and sharp interface initial data, all of the
information about the evolution of solutions of the Cahn--Hilliard
equation were contained in the motion of the interface, then one
might hope to find a vector field $V$ on ${\mathcal M}$ governing the
evolution of the interface, and a function $m$  from ${\mathcal M}$ to
$C^\infty(\Om)$ so that \eqref {flow2a} defines the corresponding solution
of the Cahn--Hilliard equation.

In  \cite {CCO}, a result of this type has been proved.  More
specifically, a sequence of vector fields $V_0,V_1,V_2,\cdots$
defined on ${\mathcal M}$ was constructed such that the interface
for the solution of \eqref {ch0} satisfies \eqref {flow1a} for $V
:= \sum_{j=0}^\infty \e^jV_j$.  It turned out that the leading
term $V_0$  is the vector field generating the {\it Mullins
Sekerka} flow, as one could expect from the pioneering work of
Pego \cite{Pego}  made rigorous by Alikakos, Bates and Chen   \cite {A-B-C}.  
In these papers 
the approximate solutions were constructed by using matched
asymptotic expansions which give no information on the higher
order corrections to the flow. The approach introduced in \cite
{CCO} unable   to determine at any given order the velocity of
the flow.

Let us fix a number $S>0$ that will later be interpreted as a
``surface tension', denote by $K(\xi)\equiv K(\xi,\G)$  the
curvature at $\xi \in \G$ and by $\nu$ the unit outward normal either to $ \partial \Om$ or to $ \G$.
Further, for each $\G$ in ${\mathcal
M}$, let $\mu $ be the solution of
\begin{equation}
\label{ms1a} \Delta\mu (\xi) = 0 \qquad{\rm for}\qquad \xi \in
\Om\setminus \G,
\end{equation}
subject to the boundary conditions
\begin{equation}
\label{ms2a} \mu (\xi) = S\left(K(\xi)-\frac { 2\pi}
{|\G|}\right)\;\;\;\;{\rm on}\;\;\;\;\G,\;\;\;\;
  \partial_\nu \mu  = 0\;\;\;\;{\rm on}\;\;\;\;
\partial\Om,
\end{equation}
where $|\G|$ denotes the arc length of $\G$  and $\partial_\nu$ is the outward normal
derivative to $\partial \Om$.
 Now define $V_0(\G)$ as the real valued function on $\G$ given by
\begin{equation}
\label{ms3a} V_0(\xi,\G) :=  \frac 12  \left [  \partial_\nu  \mu \right]_{\G }(\xi) \qquad \xi \in \G,
\end{equation}
where the brackets on the right-hand side denote the jump of the
normal derivative across $\G $.  In this way one defines a vector
field on ${\mathcal M}$ which generates a flow known as the
Mullins--Sekerka flow.
For the local existence of a unique smooth solution
of the free boundary problem \eqref {ms1a}, \eqref {ms2a} and
\eqref {ms3a} see  Chen,\cite{C2}   and    Escher and Simonett, \cite{ES}. 
As it is well known, the Mullins--Sekerka flow conserves the area
enclosed by $\G_t$ and decreases the arc length of $\G_t$.

The higher order terms in  $\sum_{j=0}\e^{j}V_j$ are more
complicated. In  \cite {CCO}, $V_1$ which is the next correction
to $V_0$ was computed and a general technic of calculating all the
higher order terms has been presented. The description of $V_1$,
like that of $V_0$, is in the context of potential theory.

\subsection{The $\varepsilon$-dependent generalized Cahn-Hilliard
Equation}\label{subsec2}
We consider the generalized  Cahn-Hilliard equation of the
following type
\begin{equation}\label{1a}
\begin{split}
& \partial_t  m^{\eps}(\xi,t)=\Delta\Big{(}-\eps
\Delta
m^\eps(\xi,t)+\frac{f(m^{\eps}(\xi,t))}{\eps}-G_2(\xi;\eps)\Big{)}
+G_1(\xi;\eps),\;\;\;\xi\;\;\mbox{in}\;\;\;\Omega,\;\;\;t> 0,
\end{split}
\end{equation}
where $\Delta$ is the  Neumann Laplacian  operator on $\Omega$.
The terms $G_1$ and $G_2$ may depend on time also. In the present
analysis, we shall consider the case where $G_1$ and $G_2$ depend
only on $\xi$ since our aim is to explain the main strategy in
the simplest interesting setting. As it will be clear in the
sequence, the proposed method is suitable for the time dependent
case as well.

The term $G_2$ in \eqref{1a} models general external fields,  see
\cite{Hohen,Gurtin}. In  \cite{Kitahara}  the authors apply the
Kawasaki  exchange dynamics to derive a modified Cahn-Hilliard
equation where $G_2$ describes the external gravity field. The
free energy-independent term $G_1$ may describe an external mass
supply, cf. \cite{Gurtin}, or \cite{Antal} where $G_1$ was
defined as a deterministic Gaussian function.  Such a model is
described for example in \cite{Antal}, in order to model spinodal
decomposition in the presence of a moving particle source, as a
mechanism for the formation of Liesengang bands.  
In addition, $G_1$ was introduced as a conservative white noise
of thermal fluctuations cf. \cite{Hohen} or \cite{cook}
(Cahn-Hilliard-Cook model).  
Existence
and uniqueness of solution for the stochastic problem was
established in \cite{Debu2,Weber,AK} while dynamics and
stochastic stability were analyzed for the one-dimensional case
in \cite{ABK}. Furthermore, the interface stochastic motion and
singular perturbation has been studied for many related models
like Allen-Cahn or Ginzburg Landau and phase-field models, cf.
for example \cite{Fun1,abak}.

Integrating  \eqref{1a}  over $\Omega$ we
get
 \begin{equation}\label{genlowmass2}
\partial_{t}\left ( \int_\Om m^\e (\xi,t ) d \xi\right)=\int_{\partial\Omega}\partial_{\nu} \Big{(}-\eps \Delta m^\eps +
\frac{f(m^\eps)}{\eps}-G_2(s;\eps)\Big{)}ds+\int_{\Omega}G_1(\xi;\eps)d\xi.
\end{equation}
Therefore,   $\int_\Om m^\e (\xi,t ) d \xi $ is not conserved unless
the  second member of \eqref {genlowmass2} is  null.
 Generally, due to the
presence of the external force field $G_2$ and the external mass
supply $G_1$, a free energy decreasing is not expected.   For a
mathematical analysis of the problem when $G_2=0$ and $G_1$ is in
$L^2(\Omega)$ cf. \cite{C-H-forcing,El-Z}.

An equivalent system formulation of \eqref{1a} is the following
\begin {equation}\label {peg1}  \partial_t   m^\e (\xi,t)    =
   \Delta
\mu^{\e} (\xi, t ) +   G_1(\xi;\e),  \end {equation}
\begin {equation} \label{peg2}
 \mu^{\e}(\xi, t )  = -\e \Delta
m^\e(\xi,t)
   +\frac 1 \e  f(m^\e  (\xi,t) ) -   G_2(\xi;\e),      \end {equation}
where $\Delta$ is the  Neumann Laplacian  operator on $\Omega$.
This representation will be used in our analysis. For the
purposes of this paper we consider the $\eps$-dependent
generalized Cahn-Hilliard equation \eqref{1a} (and equivalently
the system \eqref {peg1}, \eqref {peg2}) supplemented with an
initial condition
\begin{equation}\label{initial1}
m^\eps(\xi,0)=m^\eps_0(\xi) \simeq  \left \{\begin {split} & -1  \quad \hbox {on} \quad \Om^-_0\cr &
+1 \quad \hbox {on} \quad \Om^+_0 , \end {split} \right.
\end{equation}
where    $\Om^-_0$ is the region of $\Omega$ enclosed by a smooth closed curve $\G_0$
 and $ \Om^+_0 = \Om \setminus \left ( \Om^-_0 \cup \G_0 \right)$.
 Thus, we are assuming  that the interface is already initial formed. 
 Further we take the following
Neumann boundary conditions
\begin{equation}\label{bcgen}
\partial_\nu  m^\eps=\partial_\nu  \Delta m^\e= 0\;\;\;\mbox{on}\;\;\;\partial\Omega.
\end{equation}
 We assume that the forcing  terms $G_1$ and $G_2$ are sufficiently smooth,
and that   $ \partial_ \nu G_2 =0$ on $ \partial \Om$ so that  \eqref {bcgen} becomes
\begin{equation}\label{bcgen1}
\partial_\nu  m^\eps=\partial_\nu  \mu^\e=\partial_\nu  \Delta m^\e= 0\;\;\;\mbox{on}\;\;\;\partial\Omega.
\end{equation}
We dot not  require
\begin{equation}\label{masscons}
 \int_{\Omega}G_1(\xi;\eps)d\xi=0.
\end{equation}
Hence, mass conservation  might not hold. The precise assumptions
for  the forcing terms $G_1$ and $G_2$ will be given in Section
2. For sufficiently smooth initial conditions and forcing terms
$G_1$, $G_2$, there exists a unique  classical  solution of the
generalized Cahn-Hilliard equation. The proof is analogous to
that of the homogeneous case presented in \cite{El-Z}.
 
  Notice that   if we write \eqref {1a} in the original  not scaled  variables  $(x,t)$
 the terms $G_1$ and $G_2$ are  
 small perturbations of the standard Cahn-Hilliard
equation. The term $G_1$ in the original variables $(x,t)$ is
multiplied by a factor $\e^3$ and the term $G_2$ by a factor $\e$.
The problem that we pose is the following. Take  as in the
homogeneous Cahn-Hilliard equation, initial data $m_0(\xi)$ like in \eqref {initial1}.
Due to the presence of $G_1$ and $G_2$ the constant functions
$m^\e = \pm 1$ are not anymore stationary solutions of \eqref
{1a}. But we still expect that eventually at later times $t$ there
will appear a fairly sharp interface between the regions where
$m^\e(\xi,t) \approx +1$ and where $m^\e(\xi,t) \approx -1$,
centered on some smooth curve $\G_t$.
 We prove that this is
indeed the case.
We derive the motion of $\G_t$
determining the vector field. It turns out that the leading term
$V_0$ in the vector field $\sum_{j=0}^{N-1} \e^j V_j
\left(\G^{(N)}_t \right)$, governing the interfacial flow (see
\eqref {e0}), is not the vector field generating the Mullins
Sekerka  flow appearing in the sharp limit of the homogeneous
Cahn-Hilliard equation. In fact, we obtain
  \begin{equation}\label{nov2} V_0 (\cdot,\G_t)= V^{(0)}_0(\cdot,\G_t) + \langle V_0\rangle_{\eightpoint
\Gamma_t}, \end {equation} where
 $$  \int_{\G_t}V^{(0)}_0(\eta ,\G_t) \dse =0, $$
 and
\begin{equation}
\label{nov.4}  \langle V_0\rangle_{\eightpoint \Gamma_t}
 =   \frac 1 {2 |\G_t|} \int_\Om  G_{1,0} (\eta)  {\rm d}\eta\;\;\;\;  t \in [0,T]. \end {equation}
  Here, and in the following, we denote by $\dse$ the element
of the arc length along $\G$ or $\partial \Om $.   We will indeed  prove that, as $ \e \to
0$ the singular limit  of \eqref {peg1} and \eqref {peg2} leads
to the following  moving boundary problem: Given a closed curve
$\G^0$ in $\Omega$ that it is the boundary of an open set
$\Om^-_0 \subset \Om$ find a family $ \Big{\{} \G_t \in \MM: t \in
[0,T] \Big{\}}$  and functions $\mu(\xi,t)=\mu(\xi,\G_t) $ for $t
\in [0,T]$ and $\xi \in \Om$   so that
\begin{equation}
\label{nov.3}  \begin {split} & \Delta\mu (\xi,t) = -G_{1,0}
(\xi)   \qquad \xi \in \Om\setminus \G_t,\qquad  t \in  [0,T), \cr
& \mu (\xi,t) =  2S K(\xi,\G_t) -   G_{2,0} (\xi)  \quad \hbox
{\rm on}\quad \G_t,\;\;\;\;
  \partial_ \nu \mu (\cdot,t) = 0  \quad \hbox {\rm on}\quad
\partial \Om, \qquad  t \in  [0,T),  \cr &
 V_0(\cdot, \G_t) = \frac 12  \left [  \partial_
\nu   \mu \right]_{\G_t }(\xi) \qquad \xi \in \G_t, \qquad t \in
(0,T), \cr & \G_0 = \G^0,
\end {split}
\end{equation}
where $G_{1,0}(\xi):=\displaystyle{\lim_{\eps\rightarrow
0}}G_1(\xi;\eps)$ and
$G_{2,0}(\xi):=\displaystyle{\lim_{\eps\rightarrow
0}}G_2(\xi;\eps)$ and $S>0$ is the surface tension defined in \eqref {surf}.
In \cite{AKK} the authors
applied formal asymptotics to analyze the sharp interface motion
for  generalized Cahn-Hilliard equations of the form \eqref
{1a}. The limit problem, which was  formally
derived in \cite{AKK},
   agrees exactly to \eqref{nov.3} which is rigorously proven in
this paper.

We immediately obtain for any $t \in (0,T)$
  \begin{equation}
\label{ms4a}
2 \int_{\G_t} V_0(\eta,\G_t) \dse = \int_{\G_t}    \left [  \partial_ \nu   \mu
\right]_{\G_t}(\eta) \dse =    -  \int_{\Om\setminus \G_t}\Delta \mu
(\eta,t) {\rm d} \eta  =  \int_\Om  G_{1,0} (\eta)  {\rm d}\eta,
\end{equation}
i.e  \eqref {nov.4}. Recalling that   ${{\rm d}\over {\rm d}t}
|\Om_{\G_t}^-| =  \int_{\G_t} V_0(\eta,\G_t) \dse$, we obtain
that the area enclosed  by $\G_t$ is not conserved unless $
\int_\Om  G_{1,0} (\eta)  {\rm d}\eta=0$. Also, we have
\begin {equation} \label{nov.5a} \begin{split}
{ {\rm d}\over {\rm d}t}  |\G_t|& =    \int_{\G_t} K(\eta, \G_t)
V_0(\eta,\G_t)\dse= \frac 1 {2S}  \left ( \int_{\G_t} \mu
V_0(\eta,\G_t)    \dse  +  \int_{\G_t} V_0(\eta,\G_t) G_{2,0}
(\eta) \dse \right ).
 \end{split}
\end{equation}
Let us denote by $ \mu^\pm (\cdot, \G_t)$  the restriction of $
\mu (\cdot, \G_t)$ in $\Om^\pm_t$. It follows that
\begin {equation} \label{nov.6} \begin{split}
   2 \int_{\G_t}  \mu  V_0(\eta,\G_t)\dse & =  \int_{\G_t}   \mu^+ \partial_{\nu} \mu^+  \dse -
\int_{\G_t}   \mu^- \partial_{\nu} \mu^-  \dse  \cr &=
-  \int_{\Om^-_t} div ( \mu \nabla
\mu )   {\rm d} \xi -     \int_{\Om^+_t} div ( \mu \nabla
\mu )   {\rm d} \xi   \cr &=  -  \int_{\Om}   |\nabla
\mu|^2  {\rm d} \xi  - \int_{\Om \setminus \G_t} \mu \Delta \mu    {\rm d} \xi  =
 -  \int_{\Om}   |\nabla
\mu|^2  {\rm d} \xi  + \int_{\Om \setminus \G_t} \mu  G_{1,0} (\xi)  {\rm d}\xi.
\end{split}
\end{equation}
From these computations there is no reason to expect  that ${
{\rm d}\over {\rm d}t}  |\G_t|$  is not positive. So, even in the
case when the volume is conserved, i.e  when $ \int_\Om  G_{1,0}
(\eta)  {\rm d}\eta =0$ the length of the curve does not
decrease. The unknown  $ \Big{\{} \G_t \in \Om: t \in [0,T]
\Big{\}}$ and $\mu^\pm$  are coupled through the system \eqref
{nov.3}. However if the position and the regularity of the moving
boundary   $ \Big{\{} \G_t \in \Om: t \in [0,T] \Big{\}}$ is
known, the chemical potential $ \mu$  is  obtained by  solving at
each time  $ t \in [0,T)$ the elliptic boundary value problem
\begin {equation} \label{nov.3a}  \begin {split} & \Delta\mu (\xi,t) = -G_{1,0} (\xi)   \qquad \xi \in
\Om\setminus \G_t,\qquad  t \in  [0,T), \cr &  \mu (\xi,t) =  2S
K(\xi,\G_t) -   G_{2,0} (\xi)  \quad \hbox {\rm on}\quad
\G_t,\;\;\;\;
  \partial_\nu \mu (\cdot,t) = 0  \quad \hbox {\rm on}\quad
\partial \Om, \qquad  t \in  [0,T).
\end {split}
\end{equation}
In this sense we call a family  $\{\Gamma(t);t\in[0, T)\}$  of
surfaces a  solution of \eqref {nov.3}. To our  knowledge there
are no result regarding the existence and the uniqueness of
solution for the moving boundary problem of the type \eqref
{nov.3}. A modified Mullins Sekerka motion has been studied by
\cite {EN}, but it differs from \eqref {nov.3} either at the
presence of the term $-G_{1,0}$ which is replaced in \cite {EN}
by a specific function of  time  only, either at the presence of
$-G_{2,0}$ which does not appear in \cite {EN}. We think that a
method similar to the one used in \cite {EN} might be useful to
give   existence and uniqueness of  the classical solution of \eqref {nov.3}.
For the purposes of this paper, we assume that there exists a
unique classical solution of the free boundary problem \eqref
{nov.3}.

%
\section{Notations and Main results}\label{sec2}

\subsection{ Vector fields and flow on the   curve space }\label{subsec3}  Let $\MM $
denote the set of all smooth simple closed curves in $\Om \subset
\R^2$. To discuss motion in $\MM $ it is convenient to introduce
local coordinates in the neighborhood of any given $\G \in \MM $.
To this aim we define:
\medskip
  \begin {defin} \label {curv}
Let $K(\xi)=K(\xi,\Gamma)$ denote the curvature at a point
$\xi\in \Gamma$ for   $\Gamma\in\MM$.  We define
$$k(\Gamma):=\displaystyle{\max_{\xi\in\Gamma}}|K(\xi)|.$$
\end{defin}
We denote  by $d(\xi, \G)$ the signed distance of $\xi \in \Omega
$ from $\G$. We define $d<0$ when $\xi$ is inside $\G$  and $d>0$
when $\xi$ is outside $\G$. As long as  $d(\xi, \G) \le
\frac{1}{k(\Gamma)} $ there is a uniquely determined point  $\eta
\in \G$ such that $|\eta- \xi| = d(\xi, \G)$; this is the  point
in $\Gamma$  closest  to $\xi$. Therefore, for any $\eps_0$ such
that $0<\eps_0<\frac{1}{k(\Gamma)}$, let
$$\mathcal{N}(\eps_0)=\mathcal{N}(\eps_0,\Gamma):=\Big{\{}\xi\in
\Omega :|d(\xi,\Gamma)|\le \eps_0\Big{\}}.$$ There is a natural set
of coordinates in $\mathcal{N}(\eps_0)$.  Given $\xi \in
\mathcal{N}(\eps_0)$ we denote by $\rho$ the diffeomorphism
 $ \rho:  \NN(\e_0) \to  [-\e_0, \e_0] \times \Gamma$  defined by
  $ \rho(\xi)= ( d (\xi), s(\xi))$ (whenever this does not cause ambiguity we omit to write the explicit
  dependence of $\NN$ or $d$ on $\G$).
  We have that
  $$ \xi=   s(\xi)+  d\; \nu (s(\xi)),$$
  where $\nu(s(\xi))$ denotes the unit outward normal to $\G$ at $ s(\xi)$.
  For $d \in [-\e_0,\e_0]$ and $s \in \G$ let $  \alpha (d ,s) $  be the Jacobian of the local change of
variables $ \alpha(d,s)= \hbox {det} \frac {\partial
\rho^{-1}(d,s)} {\partial (d,s)}  $. A standard computation (cf.
\cite [appendix] {GT}) gives
  $  \alpha(d,s) = \prod_{i=1}^{n-1} \left (  1- d K_i(s) \right ), $
where  $K_i(s)$, $i=1,\dots,n$ are the principal  curvatures at $
s \in \G$,  in the direction $i$.  When $n=2$ we have
\begin{equation} \label {8.15a} \alpha(d,s) =   1- d K (s).  \end {equation}
In the sequel we identify functions of variable $\xi$ and
functions of variable $(d,s)$ in the domain $\NN(\e_0)$.
We denote by
$$z = \frac d \e $$ the stretched variable.

The introduced coordinates in  $ \NN(\e_0, \G)$ provide the means
to give $\MM $ the structure of a differentiable manifold and to
study motions in this manifold, see \cite [Section 2]{CCO}. A
vector field $V$ on $\MM $ is a functional associating to each
$\G$ in $\MM$ a function in $C^\infty(\G)$. This function defines
the normal velocity of a point on $\G$ and thus, describes a
``flow'' on $\MM $. More specifically, we may formally write
\begin{equation}
\label{flow1} {{\rm d}\over {\rm d}t}\G_t = V(\G_t).
\end{equation}
We denote the lifetime $T$ of the flow  \eqref{flow1}, starting
at $\G \in \MM$ as
\begin{equation}
\label{ll.2} T = \inf \Big{\{} t>0: k(\G_t) \le k_0\Big{\}},
\end{equation}
where $k_0$ is  any arbitrarily chosen positive number so that  $
k(\G) \le k_0<\infty$. If  $V(\cdot,\G) = K(\cdot,\G)$, the
curvature at $s\in \G$, one obtains the curve shortening {\it
flow by curvature}. When $V(\cdot,\G)$ is given by \eqref
{ms3a}   we  have the Mullins--Sekerka vector field, described in
the  introduction. When
  $V(\cdot,\G)$ is given by \eqref  {nov2} we have the flow characterizing the sharp
  interface motion studied in this paper.

A given flow on $\MM $ produces an evolution in $C^\infty(\Om)$
through the following device: Let $m$ be any function from
${\mathcal M}$ to  $C^\infty(\Om)$; we write $m(\xi,\G)$ to
denote $m(\G)$ evaluated at $\xi\in \Om$. Then a time dependent
function $m(\xi,t)$ may be defined on $\Om$ as follows:
\begin{equation}
\label{flow2} m(\xi,t): = m(\xi,\G_t).
\end{equation}

 There is an obvious but useful decomposition of vector fields on
$\MM$. Given a vector field $V$ on $\MM$ we may apply the
decomposition
\begin{equation}
\label{decp} V(\cdot,\G) = V^{(0)}(\cdot,\G) + \langle
V\rangle_{\eightpoint \Gamma},
\end{equation}
where
\begin{equation}
\label{avapart} \langle V\rangle_{\eightpoint \Gamma} := {1\over
|\G|}\int_\G V(\xi,\G){\rm d}S_\xi,
\end{equation}
 and
 \[
V^{(0)}(\cdot,\G) := V(\cdot,\G) - \langle V\rangle_{\eightpoint
\Gamma}.
\]
 Since $\langle V\rangle_{\eightpoint
\Gamma}$ is constant then  by its definition $V^{(0)}$ is
orthogonal to the constants in the $L^2(\Gamma)$ inner product
i.e satisfies
$$  \int_\G V^{(0)}(\eta,\G) \dse =0,$$
and therefore, it generates a volume preserving flow in the sense
that for any $t$ the area enclosed by $\Gamma=\Gamma_t$ is
constant.

Under the ansatz  given below, in this paper, we derive separate
equations for the components $V^{(0)}(\cdot,\G)$ and $\langle
V\rangle_{\eightpoint \Gamma}$ for each of the vector fields
$V_j$.
  \begin {andefin} \label {main}  Let
$V_0,V_1,V_2,\cdots$ be a sequence of vector fields on $\MM$ and
$m_0,m_1,m_2,\cdots$ functions from $\MM$ to $C^\infty(\Om)$.
 For  any  given initial interface $\G_0$ in $\MM$
 and all $N > 0$,  let  $\G^{(N)}_t$ be the solution of
 \begin{equation}\label{e0}
{{\rm d} \G^{(N)}_t \over {\rm d}t} = \left[\sum_{j=0}^{N-1} \e^j
V_j\right]\left(\G^{(N)}_t \right)\qquad {\rm with}\qquad
\G^{(N)}_0 = \G_0.
\end{equation}

 We define the function $m^{(N)}(\xi,t)$  by
 \begin{equation}\label{e1}
m^{(N)}(\xi,t) =  m_0  \Big{(}\frac {d(\xi,\G^{(N)}_t)} \e
\Big{)} + \sum_{j=1}^N \e^j m_j(\xi,\G^{(N)}_t),
\end{equation}
 and notice that
$m^{(N)}(\xi,t)$ depends on $t$ only through $\G^{(N)}_t$.

 We set
 \begin{equation}\label{e1a}
m_0(z):=r\Big{(}\frac{\eps}{\eps_0}z\Big{)}\bar
m(z)+\Big{(}1-r\Big{(}\frac{\eps}{\eps_0}z\Big{)}\Big{)}{\rm
sgn}(z), \end{equation} where $\bar m(z):={\rm tanh}(z/\sqrt{2})$
\footnote {The explicit form of  the solution is never used. We
will use only its qualitative properties.} defined for any
$z\in\mathbb{R}$ is the unique solution of the Euler-Lagrange
equation
$$ -m'' (z) +f(m (z)) =0, \qquad z \in \R, \quad  \lim_{z \to \pm \infty}  m(z)= \pm 1, $$
  and $r$ is a smooth even unimodal cut-off function, $r(u)=1$ for $|u|  < \frac 12$ and $r(u)=0$ for $u>1$.

In addition, let
\begin{equation}\label{e2}
m_j(\xi,\G^{(N)}_t)  :=   h_j \Big{(}{d(\xi,\G^{(N)}_t)\over \e},
s(\xi,\G^{(N)}_t)\Big{)}
     + \phi_j (\xi,\G^{(N)}_t), \quad \xi \in \Omega, \quad
     j=1,\cdots,N,
\end{equation}
  where $h_j$ are  $C^\infty (\Omega)$  functions  equal to $ 0 $ in $\Om \setminus \NN
  (\e_0)$ and when $d(\xi,\G^{(N)}_t)=0$. The functions $\phi_j$, $ j=1,\cdots,N $  are in $C^\infty (\Omega)$,
  satisfy the Neumann boundary conditions on $ \partial \Omega$ and admit a global Lipschitz bound,
  independent of $\e$,
  i.e.
  $$ \|\phi_j\|_{ Lip (\Om)} \le C \quad j=1,\cdots,N,$$
  where $C$ is a constant independent of $\e$.
\end {andefin}

{\bf  Notational convention} Below we denote by $m^{(N)}(\xi,t):= m^{(N)}(\xi,\G^{(N)}_t)$ and
$\mu^{(N-1)}(\xi,t):= \mu^{(N-1)}(\xi,\G^{(N)}_t)$.
If there is no ambiguity we write  $\G$ or $\G_t$  for $\G^{(N)}_t$.  Trough what follows, we write $C$ to designate a generic positive constant independent on $\e$. Its actual numerical value  may change from one occurrence to the next.

\begin {rem} The Ansatz \ref {main}  must be modified when $G_1$ and $G_2$ depend on time. 
 The   \eqref {e1}   should be replaced by
 $$     m^{(N)}(\xi,t) =  m_0  \Big{(}\frac {d(\xi,\G^{(N)}_t)} \e
\Big{)} + \sum_{j=1}^N \e^j m_j(\xi, t, \G^{(N)}_t). $$
Notice that
$m^{(N)}(\xi,t)$ depends now on $t$ not only   through $\G^{(N)}_t$.
One can verify that   the first order function $ m_0 $  keeps to depend  on $t$ only trough $\G^{(N)}_t$.
  \end {rem}

\subsection {Main results}\label{subsec4}
We start constructing a function $ m^{(N)} (\xi, \G_t)$, for $
\xi \in \Om$ and $ t \in [0,T]$ where $T$ is the lifetime of
\eqref  {e0} and show
that it is an approximate solution of \eqref {1a}. We make the
following assumptions on the forcing terms $G_1$ and $G_2$.

\medskip \noindent
{\bf  A1: Assumptions for $G_1$ and $G_2$.}
For any $N>1$ we require
\begin{equation}\label{GG1a} \begin {split}
&G_i(\xi;\e) = \sum_{j=0}^{N-1}  \e^j G_{i,j}(\xi) + \e^{N}
G_{i,N}(\xi,\e ), \qquad   |G_{i,N}(\xi,\e )| \le C,\cr &
 G_{i,j}  \in C^\infty (\Om)  \qquad j=1,
\cdots, N-1, \quad  i=1,2, \cr &
 \partial_{\nu} G_{2,j } =0 \quad \hbox {on} \quad \partial \Om,  \qquad j=1,
 \cdots,
 N.
\end {split} \end{equation}

\begin {rem}We require
$G_{i,j} \in C^\infty (\Om)$ for $j=1, \cdots, N-1$ and for
$i=1,2$ to  avoid regularity problems, but this assumption can be
relaxed.   \end {rem}

 \medskip
\noindent\begin{thm} \label{1.1} Let $N>1 $ and $G_1$ and $G_2$
be as in assumptions  A1.
    There exist vector fields $V_j$,
$j=0,\cdots,(N-1)$ and  functions $m_j$, $j=0,\cdots, N$ as
prescribed in the Ansatz \ref{main} having the following
properties: Let $T$ denote the lifetime of the solution of \eqref
{e0} in ${\mathcal M}$. Then there is a constant $C_N$ so that
for all $t < T$
\begin{equation}
\label{ch0N}
 \partial_t  m^{(N)} (\xi,t)    =
\Delta\left(-\e \Delta
m^{(N)} (\xi,t)
   +\frac 1 \e  f(m^{(N)}   (\xi,t) )
   - \sum_{j=0}^{N-1}  \e^j G_{2,j}(\xi)  \right) +   \sum_{j=0}^{N-1}  \e^j G_{1,j}(\xi)  + \Delta
   R^{(N)}
(\xi,t),
\end{equation}
where
\begin{equation}
\label{goodfit} \sup_{\xi\in \Om,t \in [0,T]}\left|R^{(N)}
(\xi,t)\right| \le  C_N\e^{N-1}.
\end{equation}
Finally, the sequences of vector fields and functions are
essentially uniquely determined since given $V_j$ for $j < k$
then $V_k$ is determined up to ${\mathcal O}(\e^{k+1})$, and
similarly, given $m_j$ for $j < k$ then $m_k$ is determined up to
${\mathcal O}(\e^{k+1})$.
\end{thm}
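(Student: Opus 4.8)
The plan is to follow the Hilbert expansion scheme of \cite{CCO}, the genuinely new point being the bookkeeping of the forcing terms $G_1,G_2$ through every level of the expansion, and to determine the vector fields $V_j$ and the profiles $m_j$ by matching powers of $\e$ in the system form \eqref{peg1}--\eqref{peg2}. First I would substitute the ansatz \eqref{e1}--\eqref{e2} for $m^{(N)}$, together with a parallel expansion $\mu^{(N-1)}=\sum_{j=0}^{N-1}\e^j\mu_j$, into \eqref{peg1} and \eqref{peg2}. In the tubular neighbourhood $\NN(\e_0,\G_t)$ I would pass to the coordinates $(d,s)$ and the stretched variable $z=d/\e$, using \eqref{8.15a} to expand the Neumann Laplacian as $\Delta=\e^{-2}\partial_z^2+\e^{-1}(\text{curvature terms})\partial_z+\dots$, and I would compute $\partial_t m^{(N)}$ by the chain rule, observing that $t$ enters only through $\G^{(N)}_t$ so that $\partial_t d(\xi,\G_t)$ equals minus the normal velocity $\sum_{j<N}\e^j V_j$. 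Inserting also $G_i=\sum_{j<N}\e^j G_{i,j}+\e^N G_{i,N}$ from A1 and collecting equal powers of $\e$ produces an \emph{inner hierarchy} of ODEs in $z$ (with $s$ a parameter) coupled to an \emph{outer hierarchy} of elliptic problems on $\Om\setminus\G_t$.

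The inner equations all take the form $\LL h_j=\Psi_j$, with $\LL:=-\partial_z^2+f'(\bar m(z))$ the linearisation at $\bar m$ and $\Psi_j$ an explicit expression in $m_0,\dots,m_{j-1}$, the lower-order velocities, the curvature, and the $G_{i,k}$ with $k<j$. Since $\ker\LL=\mathrm{span}\{\bar m'\}$ with $\bar m'$ exponentially decaying, the Fredholm alternative yields a unique exponentially decaying $h_j$ exactly when $\int_\R\bar m'(z)\,\Psi_j(z,s)\,dz=0$; isolating in this identity the coefficient of the unknown velocity fixes the zero-average part $V_j^{(0)}$ in the decomposition \eqref{decp}. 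The average $\langle V_j\rangle_{\Gamma_t}$ is invisible to the local solvability condition and is instead forced globally: integrating \eqref{peg1} over $\Om$ as in \eqref{genlowmass2}, using ${\rm d}|\Om^-_{\G_t}|/{\rm d}t=\int_{\G_t}V\,\dse$ and the $\partial_\nu$-conditions, and matching powers of $\e$ gives $\langle V_j\rangle_{\Gamma_t}$ in terms of $\int_\Om G_{1,j}$ — at leading order precisely \eqref{nov.4}, while the leading inner solvability condition together with the outer problem reproduces the modified Mullins--Sekerka law \eqref{nov.3}, with $-G_{1,0}$ in the bulk and $-G_{2,0}$ in the interfacial condition. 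Concurrently I would solve the outer hierarchy: $\mu_0$ solves \eqref{nov.3a}, each higher $\mu_j$ and the bulk correction $\phi_j$ in \eqref{e2} solves a linear elliptic Neumann problem whose data come from lower orders and from $G_{i,j}$, and the integration constants in the $h_j$ are chosen so that $h_j(z,s)$ tends as $z\to\pm\infty$ to the boundary trace of the outer solution; the exponential decay of $\LL^{-1}$ on exponentially decaying data then makes multiplication by the cut-off $r$ cost only $O(e^{-c/\e})$.

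With the $V_j$ and $m_j$ constructed order by order as above, the residual in \eqref{ch0N} consists of the truncation terms from $\Delta(-\e\Delta m^{(N)}+\e^{-1}f(m^{(N)}))$ and from $\partial_t m^{(N)}$, the $\e^N G_{i,N}$ contributions controlled by $|G_{i,N}|\le C$, and the exponentially small commutator terms coming from the cut-offs; the exponential decay of $\bar m'$ and of the $h_j$ in $z$ together with the uniform Lipschitz bounds on the $\phi_j$ bound each of these in $L^\infty(\Om\times[0,T])$. To put the part of the residual not already of the form $\sum_j\e^j G_{1,j}$ into the form $\Delta R^{(N)}$ one solves, at each $t$, a Neumann--Poisson problem on $\Om$; its right-hand side has mean value $O(\e^N)$ — this is again the global mass balance that fixed the $\langle V_j\rangle$ — so elliptic regularity gives a solution $R^{(N)}$ with $\|R^{(N)}\|_{L^\infty}\le C_N\e^{N-1}$, the loss of one power of $\e$ coming from the stretched-variable scaling.

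Uniqueness ``up to $\OO(\e^{k+1})$'' follows by induction: once $V_0,\dots,V_{k-1}$ and $m_0,\dots,m_{k-1}$ are fixed, the order-$\e^k$ inner solvability condition determines $V_k^{(0)}$, and $h_k$ is then unique after a normalisation such as $\int_\R\bar m'(z)h_k(z,s)\,dz=0$; the global balance determines $\langle V_k\rangle$, and the outer Neumann problem determines $\mu_k,\phi_k$; any residual freedom changes $m^{(N)}$ only at order $\e^{k+1}$. I expect the main obstacle to be the combinatorial organisation of the inner hierarchy — extracting at each order the exact coefficient of the unknown velocity in $\Psi_j$ so that the solvability condition genuinely determines $V_j^{(0)}$, and verifying that this local determination is compatible with the global determination of $\langle V_j\rangle$ imposed by the non-conservation of mass — since $G_1$ and $G_2$ perturb the clean structure of \cite{CCO} at every level.
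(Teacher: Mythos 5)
Your proposal follows essentially the same route as the paper: the two-step Hilbert expansion in which the Neumann solvability of the $\Delta\mu_j$ hierarchy (equivalently, the global mass balance) fixes the averages $\langle V_j\rangle_{\Gamma_t}$, the Fredholm alternative for $\LL=-\partial_z^2+f'(\bar m)$ in the inner hierarchy fixes $V_j^{(0)}$ via the Dirichlet--Neumann operator, and the residual is recast as $\Delta R^{(N)}$ by solving a Neumann--Poisson problem whose data have mean $O(\e^N)$, losing one power of $\e$. The only points you gloss over are technical devices the paper needs to iterate to arbitrary order (the added/subtracted $\e\,\alpha_j(s)\bar m'(z)$ terms that keep each solvability condition exact, and the time-integrated mean-residual correction $\tilde m^{(N)}=m^{(N)}-\int_0^t\bar R_1$), but these do not alter the strategy.
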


\begin{rem}\label{remth}
 In Theorem \ref {1.1} and in the following, the symbol ${\mathcal
O}(\e^{m})$ denotes terms which are of order $\e^m$ uniformly in
all their variables. The qualified nature of uniqueness stated in
this theorem is an indication that there will be choices to be
made at every stage of the approximation.
\end{rem}

The proof of Theorem \ref {1.1} follows the main lines of the
scheme introduced in \cite {CCO} and it is proven    in  Section
5.  There,  the complete result relating the solution of  \eqref {ch0N}  and its sharp interface limit is given.  The construction behind the proof is patterned on the Hilbert
expansion of kinetic theory. We refer the interested reader to
\cite [Subsection 3.2] {CCO} where this connection is  discussed.
We first construct an approximate solution up to order $N$ of the
chemical potential $\mu^{\e}$
 (cf. \eqref {peg1}) assuming that the left hand side of \eqref {peg1} is known and it
 is given by the Ansatz \ref {main}. This is done in Section 3.
We,  then,  insert the  constructed approximate chemical potential into \eqref {peg2}.
 The  approximate solution $m^{(N)}$ is determined  provided certain 
     compatibility conditions  are verified. This  is done in  Section 4. Finally in Section 5 we
     construct
     $ (\tilde m^{(N)}, \tilde
\mu^{(N-1)}) $      where $\tilde
m^{(N)}$ is an $\e^N$ modification of $m^{(N)}$   and $\tilde \mu^{(N-1)}$ is an
$\e^{N-1} $ modification of $\mu^{(N-1)}$ and we show Theorem \ref {1.1}.

 Let $V_j$, $  j=1,\cdots,N-1$, be the sequence
  of vector fields introduced in the Ansatz \ref {main}. According to \eqref {decp}, we split them  as
\begin{equation*}
V_j(\cdot,\G) = V^{(0)}_j(\cdot,\G) + \langle
V_j\rangle_{\eightpoint \Gamma}\ , \qquad  j=1,\cdots,N-1.
\end{equation*}
The term $V^{(0)}_j$ is determined in Theorem \ref {re.3} by
applying the Dirichelet-Neumann operator in the context of
potential theory, while the term $\langle V_j\rangle_{\eightpoint
\Gamma}$, which is constant on $\Gamma$, is determined in Theorem
\ref {re.2}.

As already explained in the introduction, the leading term $V_0$
in the vector field $\sum_{j=0}^{N-1} \e^j V_j \left(\G^{(N)}_t
\right)$ governing the interfacial flow (cf. \eqref {e0}) is
given by
$$  V_0 (\cdot,\G_t)= V^{(0)}_0(\cdot,\G_t) + \langle V_0\rangle_{\eightpoint
\Gamma_t}. $$ 
 The  family of curves   $ \Big{\{} \G_t \in \MM, t \in [0,T]\Big{\}}$ driven by $V_0$ is the solution of the
 moving boundary problem  \eqref {nov.3}.
For a curve $\G \in \MM$, the term $V^{(0)}_0(\G)$ is determined, see Lemma \ref {42} as a
real valued function on $\G$ given by
\begin{equation}
\label{ms3} V^{(0)}_0(\xi,\G) =  \left [ 
\partial_\nu \mu_{0,0,0} \right]_{\G }(\xi) \qquad \xi \in \G,
\end{equation}
where the brackets on the right denote the jump in the normal
derivative across $\G $. For each $\G \in \MM$, $\mu_{0,0,0}$ is
the solution of
\begin{equation}
\label{ms1} \Delta\mu (\xi) = 0 \qquad{\rm for}\qquad \xi
\in \Omega \setminus \G,
\end{equation}
subject to the boundary conditions
\begin{equation}
\label{ms2} \mu  (\xi) = S\left(K(\xi)  -
\frac {\int_\G  K (\xi)  \dsx }   {|\G|} \right)   +  \frac 14 \left ( B_{0,0,0}(\xi)  -
\frac {\int_\G B_{0,0,0}(\xi) \dsx }   {|\G|}   \right )
 \;\;\;\; {\rm on}\;\;\;\;\G,\;\;\;\;
   \partial_ \nu \mu  = 0\;\;\;\;{\rm
 on}\;\;\;\;
\partial\Omega.
\end{equation}
 The term  $B_{0,0,0}$ (cf. \eqref {roma.3}) is given by
  \begin{equation}
  B_{0,0,0}(\xi)=  -2  \left [  \tilde \mu_{0,0,0}(\xi)
+  G_{2,0}(\xi ) \right ] \qquad \xi \in \G,
   \end{equation}
where (cf. \eqref {roma.22})
  \begin{equation} \tilde \mu_{0,0,0}
(\xi)  =  \langle V_0\rangle_{\eightpoint \Gamma}
 2 \int_{\Gamma} G(\xi,\eta) \dse -    \int_\Om G(\xi,\eta)G_{1,0} (\eta)  {\rm
 d}\eta,
\end{equation}
for $ G(\xi, \eta) $ the Green function in $\Omega$,  with
Neumann  boundary condition on $ \partial \Omega$, satisfying the
equation
\begin{equation}
\label{2.1000}
  \Delta G(\xi, \eta)  = \delta ( \xi -
\eta) - \frac 1 {|\Omega|}  \ ,
\end{equation}
so  that
\begin{equation}
\label{NY.4}
 \int_\Om  G(\xi, \eta) d \eta = \int_\Om  G(\xi, \eta) d \xi =0\ . \end{equation}
The second step is to show that there is an actual solution of
\eqref {1a}   close to the constructed approximate solution
$m^{(N)}(\cdot, \cdot)$ whenever  both of them start from the
initial datum $m^{\eps}_0$,  see \eqref  {initial1}. This step
for the  standard Cahn-Hilliard equation (i.e. for $G_1=G_2=0$)
has been proven in the work of Alikakos, Bates and Chen, \cite
{A-B-C}, by application of spectral estimates.   We use the spectral
estimates as in \cite {A-B-C}.  Namely the liner operator
$$ \LL w = \Delta \left ( \e \Delta w - \frac 1 \e f'(m^{(N)}(t)) w\right ), $$
that one obtains linearizing \eqref {1a} at $m^{(N)}(t)$,  $t \in
[0,T]$; the solution constructed in Theorem \ref {1.1}, is the
same as in \cite {A-B-C}. The approximate solution has the
requirements  needed to apply the spectral estimates proven by
\cite {AF} in two space dimensions and by \cite {Chen2} in
arbitrary space dimensions and in more general setting. The
assumptions imposed on the forcing terms $G_1$ and $G_2$ together
with the assumption \eqref {D.10} are enough to have these terms
under control. We state the theorem and we  outline in  the appendix
the  proof.

 Let $p>0$ and $\|\cdot\|_{p,\Omega}$ be the usual norm in
$L^p(\Omega)$, then for $T>0$ we define the norm
$$\|u\|_{p,\Omega_T}:=\Big{(}\int_0^T\|u\|_{p,\Omega}^pdt\Big{)}^{1/p}.$$

 \begin{thm} \label {thFaprox}
Take $N>1$,   $G_1$ and $G_2$ as in Assumption A1.
Further assume that
\begin{equation}
\label{D.10}
 \int_\Om  G_{1,N}(\xi,\e) d \xi =0, \qquad \forall  \eps>0.  \end{equation}
Let $m^{(N)}(t)$ for $ t \in [0,T]$  where $T$ is the lifetime
of  \eqref  {e0}, be the solution of \eqref{ch0N}.   
Let   $m^\eps$ be  the solution of the generalized Cahn-Hilliard
equation \eqref{1a} supplemented by the boundary conditions \eqref
{bcgen1} and having initial datum $m^{\eps}(\xi,0)
=m^{(N)}(\xi,0)$, $\xi\in\Omega$. 
Then, there exists $\e_0>0$ so that for  all $ \e \in (0, \e_0]$,  for any pair $\lambda>\frac{13}{3}$,
$N>\frac{3\lambda+5}{3}$,  it holds that
 \begin{equation}\label{order}
\|m^\eps-m^{(N)}\|_{3,\Omega_T}\leq \eps^\lambda.
\end{equation}
  \end{thm}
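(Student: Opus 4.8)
The plan is to follow the spectral–estimate scheme of Alikakos, Bates and Chen \cite{A-B-C}; the only genuinely new feature is the presence of the forcing terms, which will be kept under control by Assumption A1 and the extra hypothesis \eqref{D.10}. First I would set $R:=m^\eps-m^{(N)}$. Subtracting \eqref{ch0N} from \eqref{1a}, using A1 to replace $G_i-\sum_{j=0}^{N-1}\e^jG_{i,j}$ by $\e^NG_{i,N}$ with $|G_{i,N}|\le C$, and writing $f(m^\eps)-f(m^{(N)})=f'(m^{(N)})R+\QQ(R)$ with $\QQ(R):=3m^{(N)}R^2+R^3$, one gets
\begin{equation*}
\partial_t R=\Delta\Big(-\e\Delta R+\tfrac1\e f'(m^{(N)})R+\tfrac1\e\QQ(R)-\e^NG_{2,N}\Big)+\e^NG_{1,N}-\Delta R^{(N)},
\end{equation*}
where $R^{(N)}$ is the residual of Theorem \ref{1.1}, with $\sup_{\Om\times[0,T]}|R^{(N)}|\le C_N\e^{N-1}$. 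Since $m^\eps$ and $m^{(N)}$ obey the same Neumann conditions \eqref{bcgen1} and start from the same datum, and since the right–hand side has spatial mean $\tfrac{\e^N}{|\Om|}\int_\Om G_{1,N}(\xi;\e)\,d\xi=0$ by \eqref{D.10} (all the divergence–form terms having zero mean), $R$ stays mean–zero on $[0,T]$; this is exactly the purpose of \eqref{D.10}, and it lets me measure $R$ in the $H^{-1}(\Om)$ norm $\|\cdot\|_{-1}$ (inverse of the Neumann Laplacian on mean–zero functions, so that $R(0)=0$ in this norm).

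Next I would pair the equation for $R$ with $(-\Delta)^{-1}R$; using $\langle\Delta v,R\rangle_{-1}=-\int_\Om vR$ for mean–zero $v$ one obtains the energy identity
\begin{equation*}
\tfrac12\tfrac{d}{dt}\|R\|_{-1}^2+\int_\Om\Big(\e|\nabla R|^2+\tfrac1\e f'(m^{(N)})R^2\Big)d\xi=-\tfrac1\e\int_\Om\QQ(R)\,R\,d\xi+\e^N\int_\Om G_{2,N}R\,d\xi+\langle\e^NG_{1,N},R\rangle_{-1}+\int_\Om R^{(N)}R\,d\xi .
\end{equation*}
The key ingredient is the spectral lower bound for $\LL$ linearised at $m^{(N)}$, proved in two dimensions by \cite{AF} and in general by \cite{Chen2}: there are $c_0,C_0>0$ independent of $\e$ and of $t\in[0,T]$ such that, for every mean–zero $\phi\in H^1(\Om)$,
\begin{equation*}
\int_\Om\Big(\e|\nabla\phi|^2+\tfrac1\e f'(m^{(N)})\phi^2\Big)d\xi\ \ge\ c_0\,\e\int_\Om|\nabla\phi|^2\,d\xi\ -\ C_0\|\phi\|_{-1}^2 .
\end{equation*}
Before invoking it I would check that $m^{(N)}$, which coincides with the approximate solution of \cite{A-B-C} up to the order–$\e^j$ corrections produced by $G_1,G_2$, has the transition profile and interface regularity demanded by \cite{AF,Chen2}; this uses Theorem \ref{1.1} and the uniform Lipschitz bounds built into Ansatz \ref{main}.

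Inserting the spectral bound into the energy identity supplies the dissipative term $c_0\e\|\nabla R\|_{L^2(\Om)}^2$. I would absorb the forcing and residual terms on the right by Cauchy–Schwarz, the interpolation $\|R\|_{L^2(\Om)}^2\le\|R\|_{-1}\|\nabla R\|_{L^2(\Om)}$ and Young's inequality, leaving a harmless remainder $C\e^{\beta}$ with $\beta=\beta(N)$ an explicit power growing with $N$ (the slowest contribution being $\int R^{(N)}R$, controlled through $\|R^{(N)}\|_{L^\infty}\le C_N\e^{N-1}$). The cubic term $\tfrac1\e\int\QQ(R)R=\tfrac1\e\int(3m^{(N)}R^3+R^4)$ is the one that forces a continuation argument: using the two–dimensional Gagliardo–Nirenberg inequalities $\|R\|_{L^3(\Om)}^3\le C\|R\|_{L^2(\Om)}^2\|\nabla R\|_{L^2(\Om)}$ and $\|R\|_{L^4(\Om)}^4\le C\|R\|_{L^2(\Om)}^2\|\nabla R\|_{L^2(\Om)}^2$, together again with $\|R\|_{L^2(\Om)}^2\le\|R\|_{-1}\|\nabla R\|_{L^2(\Om)}$, this term is $\le\tfrac{c_0}{2}\e\|\nabla R\|_{L^2(\Om)}^2+(\text{lower order})$ \emph{provided $R$ is already known to be small}. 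So I would let $T^\ast\le T$ be the largest time with $\|m^\eps-m^{(N)}\|_{3,\Omega_{t}}\le\e^{\lambda}$ for $t\le T^\ast$; on $[0,T^\ast]$ the nonlinearity is absorbed, Gronwall's inequality for $\|R\|_{-1}^2$ gives $\sup_{[0,T^\ast]}\|R\|_{-1}^2+\int_0^{T^\ast}\e\|\nabla R\|_{L^2(\Om)}^2\,dt\le C\e^{\beta}$, and then $\|R\|_{L^3(\Om)}^3\le C\|R\|_{-1}\|\nabla R\|_{L^2(\Om)}^2$ integrated in time gives $\|m^\eps-m^{(N)}\|_{3,\Omega_{T^\ast}}^3\le C\e^{\beta/2}\,\e^{-1}\e^{\beta}$. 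The constraints $\lambda>\tfrac{13}{3}$ and $N>\tfrac{3\lambda+5}{3}$ are exactly what make the resulting power of $\e$ strictly larger than $3\lambda$, so for $\e$ small $\|m^\eps-m^{(N)}\|_{3,\Omega_{T^\ast}}\le\tfrac12\e^{\lambda}$; by continuity $T^\ast=T$, which is \eqref{order}.

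The hard part will not be any single inequality but the bookkeeping of powers of $\e$: the only dissipation available is the weak term $\e\|\nabla R\|_{L^2(\Om)}^2$, whereas the nonlinearity and the forcing carry negative powers of $\e$, so one must verify that the order–$N$ accuracy of Theorem \ref{1.1} together with \eqref{D.10}, as quantified by the stated ranges of $\lambda$ and $N$, is high enough both to close the bootstrap and to upgrade the $H^{-1}$/energy control into the $L^3(\Omega_T)$ estimate; the second delicate point is confirming that the constructed $m^{(N)}$ satisfies the structural hypotheses of the spectral estimates of \cite{AF,Chen2}.
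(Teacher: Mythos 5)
Your proposal follows essentially the same route as the paper: the paper's own proof is an outline that defers to Alikakos--Bates--Chen, highlighting exactly the two points you identify, namely that \eqref{D.10} together with the Neumann conditions keeps $R=m^\eps-m^{(N)}$ mean-zero so the $H^{-1}$/spectral-estimate machinery applies, and that $m^{(N)}$ must be checked against the structural hypotheses of \cite{AF,Chen2} (the paper makes this concrete via the parity identity $\int_\R \tilde h_1(z,s)(\bar m'(z))^2 f''(\bar m(z))\,dz=0$, which follows from $\tilde h_1$ being even and $\bar m$ odd as established in Lemma \ref{42}). Your more detailed account of the energy identity, the absorption of the cubic terms, and the bootstrap closing under $\lambda>\tfrac{13}{3}$, $N>\tfrac{3\lambda+5}{3}$ is consistent with the argument of \cite{A-B-C} that the paper invokes.
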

 
\begin{rem}\label{onthm}
The result of the previous theorem coincides with the
analogous result in $L^p(\Omega_T)$ norm of Theorem 2.1 of
\cite{A-B-C} for the case $G_1=G_2=0$ in dimensions $n=2$, since
$3=p=2\frac{n+4}{n+2}$ and since
$\lambda>\frac{13}{3}=(n+2)\frac{n^2+6n+10}{4n+16}$.
\end{rem}
\begin {rem} \label {THF1} Set   $\tilde G_{1,N-1}= G_{1,N-1}+ \e G_{1,N} $  and therefore $\tilde G_{1,N}=0$.
Determine  the velocity field $V_{N-1} $  replacing $ G_{1,N-1}$
with $\tilde  G_{1,N-1}$. In this way   the   condition \eqref
{D.10} of Theorem \ref {thFaprox}  is  trivially satisfied.
\end {rem}
 \section {The  Construction of the Approximate Chemical
potential}\label{sec3} In this section, we apply
potential theory to show  the following result.
    \begin {thm}  \label {re.2}
 Take $N>1$ and  $G_1$  as in Assumption A1.
 Let $\G^{(N)}_t$, $t \in [0,T]$, be the solution of \eqref {e0} in
${\mathcal  M}$,  with $T$ its lifetime, see \eqref {ll.2}.  Let  $m^{(N)}(\cdot,
\G^{(N)}_t)$  be  as  in  the Ansatz \ref{main}.
 There is a
 unique way to determine the
$\langle V_j\rangle  (\G^{(N)}_t) $, $j=0,\cdots,(N-1)$, such
that there exists a unique (up to a constant in $\xi$) expansion
   \begin{equation}
\label{FF.1} \mu^{(N-1)}(\xi, t)=
 \sum_{i=0}^{N-1} \e ^i
\mu_i (\xi,t)    \qquad  \hbox { in } \Omega \times (0,T),
\end{equation}
 with
\begin{equation}
\label{L.1b}
   \partial_t  m^{(N)}(\xi,t)  =
   \Delta
\mu^{(N-1)}(\xi, t) + \sum_{j=0}^{N-1}  \e^j G_{1,j}(\xi) +
R_1(\xi, t, \e) \qquad  \hbox { in } \Omega \times (0,T),
\end{equation} with $ R_1$ given in \eqref{remainder}. Further
$\mu^{(N-1)}  (\cdot, t)$, for  $ t \in  (0,T)$, is a $C^{\infty}
(\Om)$  function satisfying the  Neumann  homogeneous  boundary
conditions on $
\partial \Om$,
  \begin{equation}
\label{2.014}
 \sup_{{\xi, t} \in \Om \times [0,T]}  | R_1(\xi, t, \e) | \le
 C(T)\e^{N-1},
\end{equation} and
\begin{equation}
\label{2.015} \sup_{ t \in [0,T]} | \int_\Om  R_1(\xi, t, \e)   d
\xi | \le C(T) \e^N ,
\end{equation}
 where $C(T)$ is a constant independent of $\e$.  Moreover, the
 terms
$\mu_i $ appearing in \eqref{FF.1} are specified by
\eqref{peg1as1}, \eqref{rep1} and \eqref{rep10} below.
\end {thm}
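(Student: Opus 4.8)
The plan is to insert the Ansatz \eqref{e1}--\eqref{e2} into the left-hand side of \eqref{peg1} and expand everything in powers of $\e$, treating the stretched variable $z = d/\e$ and the tangential/bulk variables on equal footing, exactly as in \cite{CCO}. First I would compute $\partial_t m^{(N)}(\xi,t)$: since $m^{(N)}$ depends on $t$ only through $\G^{(N)}_t$, the time derivative is obtained by differentiating $d(\xi,\G^{(N)}_t)$ and $s(\xi,\G^{(N)}_t)$ along the flow \eqref{e0}, which brings in $\sum_{j=0}^{N-1}\e^j V_j$ through the normal velocity of the interface. The leading contribution is $-\frac1\e \bar m'(z)\, V(\G_t)$ plus lower-order terms from the $\e^j m_j$ corrections and from the Jacobian factor $\alpha(d,s) = 1 - dK(s)$. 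Collecting powers of $\e$, the equation $\partial_t m^{(N)} = \Delta\mu^{(N-1)} + \sum_j \e^j G_{1,j} + R_1$ becomes, order by order, a sequence of equations of the form $\Delta_\xi \mu_i = (\text{known data involving } m_0,\dots,m_i, V_0,\dots,V_i, G_{1,0},\dots,G_{1,i})$, where $\Delta_\xi$ in the coordinates $(d,s)$ splits into a fast part $\frac1{\e^2}\partial_{zz}$, a curvature-coupling part, and the tangential Laplace--Beltrami part.

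The key step is the solvability analysis at each order. Writing $\mu_i = \tilde\mu_i(z,s) + (\text{bulk part})$, the fast equation at the lowest relevant order forces $\partial_{zz}\tilde\mu_i$ to equal a prescribed profile; since $\bar m'(z)$ spans the kernel of the linearized operator $-\partial_{zz} + f'(\bar m)$, the Fredholm alternative requires that the source, tested against $\bar m'$, integrate to zero in $z$. This orthogonality condition is precisely what pins down $\langle V_j\rangle_\Gamma$: the part of the source that is constant along $\Gamma$ (coming from the decomposition \eqref{decp}) must balance against the flux generated by $G_{1,j}$ through the Green-function representation \eqref{2.1000}--\eqref{NY.4}. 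Solving the resulting scalar equation for $\langle V_j\rangle(\G^{(N)}_t)$ gives existence and uniqueness of these constants, and then back-substitution determines $\tilde\mu_i$ uniquely up to an additive constant in $\xi$ (the stated ambiguity), with the bulk part of $\mu_i$ recovered by solving a Neumann problem on $\Omega\setminus\Gamma$ with data built from the lower-order terms — this is where formulas \eqref{peg1as1}, \eqref{rep1}, \eqref{rep10} come from, and where the Green function $G(\xi,\eta)$ enters explicitly as in the formula for $\tilde\mu_{0,0,0}$.

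Finally, the remainder $R_1$ collects all terms of order $\e^{N-1}$ and higher that were truncated: contributions from the $\e^N m_N$ correction, from the tail $\e^N G_{1,N}$ of the forcing, from the cut-off function $r(\e z/\e_0)$ and its derivatives (which are supported where $|z|\sim \e_0/\e$, hence exponentially small after pairing with $\bar m'$), and from the mismatch of the coordinate change near $\partial\NN(\e_0)$. The bound \eqref{2.014} follows from the global Lipschitz control on the $\phi_j$, the smoothness of the $h_j$, and the regularity of $\G^{(N)}_t$ up to its lifetime $T$; the sharper bound \eqref{2.015} on the integral of $R_1$ follows because integrating $\Delta\mu^{(N-1)}$ over $\Omega$ kills it by the Neumann condition, so $\int_\Omega R_1\,d\xi$ equals $\partial_t\!\int_\Omega m^{(N)} - \int_\Omega \sum_j \e^j G_{1,j}$ up to the chosen orders, and the definition of $\langle V_j\rangle$ was arranged precisely so that this difference is $O(\e^N)$. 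I expect the main obstacle to be bookkeeping the order-by-order solvability: verifying that at every stage the orthogonality condition involves $\langle V_j\rangle$ in a nondegenerate (in fact affine, invertible) way and that no hidden obstruction to solving the Neumann bulk problems appears — i.e., checking the compatibility of the Neumann data at each order, which is the analogue of the compatibility conditions referenced for Section 4.
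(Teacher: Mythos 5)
Your overall architecture (order-by-order expansion of $\partial_t m^{(N)}$ along the flow, solvability conditions fixing $\langle V_j\rangle$, Green-function representation of the $\mu_i$, remainder gaining a power of $\e$ upon integration) matches the paper, but the key mechanism you invoke to determine $\langle V_j\rangle$ is the wrong one, and this is a genuine gap. You claim that $\langle V_j\rangle_\G$ is pinned down by the Fredholm alternative for the layer operator $\LL=-\partial_{zz}+f'(\bar m)$, i.e.\ by orthogonality of the source to $\bar m'$ in $L^2(\R)$. That condition belongs to the \emph{second} equation of the system, \eqref{peg2}, and in the paper it is what determines the mean-zero parts $V^{(0)}_j$ and the layer profiles $h_j$ in Section 4 (Lemmas \ref{41}--\ref{44}), via the Dirichlet--Neumann operator. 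In Theorem \ref{re.2} the equation at each order is simply $\Delta\mu_j=\sum_{i=0}^{j}D_{V_i}m_{j-i}-G_{1,j}$ in $\Om$ with $\partial_\nu\mu_j=0$ on $\partial\Om$; there is no $f'(\bar m)$ in sight, and the only obstruction is the classical Neumann compatibility condition that the source have zero mean over $\Om$. Computing $\int_\Om \frac1\e m_0'(d/\e)V(s)\,d\xi$ in local coordinates (the paper's \eqref{rm.5}) produces the factor $\int_\R m_0'(z)\,dz=2(1-e^{-\e_0/\e})$ times $\int_{\G_t}V\,\dse$, which together with $\int_{\G_t}V_j^{(0)}\,\dse=0$ yields $\langle V_j\rangle_{\G_t}=\frac{1+c_0(\e)}{2|\G_t|}\bigl[\int_\Om G_{1,j}-b_j(t)\bigr]$. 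Your mechanism would instead test against $\bar m'$ and produce the constant $\int_\R(\bar m')^2\,dz=4S$ in place of $2$, so it cannot reproduce \eqref{nov.4}; it also conflates a pointwise-in-$s$ condition (which determines a \emph{function} $V_j^{(0)}(s)$) with the single scalar constraint that determines the \emph{constant} $\langle V_j\rangle$.

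A related, smaller inaccuracy: you propose to expand $\Delta_\xi\mu_i$ into a fast part $\e^{-2}\partial_{zz}$ plus curvature and tangential terms. The paper does that expansion only for $m^{(N)}$ in Section 4 (Appendix A.2); in Theorem \ref{re.2} the $\mu_i$ are \emph{not} given a two-scale structure but are obtained directly as volume potentials $\int_\Om G(\xi,\eta)(\cdot)\,d\eta$ against the layer-concentrated source, which is what makes them $C^\infty(\Om)$ with Neumann data and unique up to a constant. Your account of the remainder bounds \eqref{2.014}--\eqref{2.015} is essentially right (the extra power of $\e$ in the integrated bound comes from the exponential decay in $z$ of the layer terms, equivalently from the cancellation built into the choice of $\langle V_j\rangle$), but the proof as planned would stall at the solvability step until the two compatibility conditions are disentangled.
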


  We look for a function $\mu^{(N-1)} $  from $ \MM$ to $ C^\infty (\Om)$ having  the form
\begin{equation}
\label{2.09} \mu^{(N-1)} (\xi,\G)  = \sum_{i=0}^{N-1} \e^i\mu_i(
\xi, \G)  \qquad   \xi \in \Om , \qquad    \partial _\nu \mu_i  = 0  \quad \hbox {on} \quad \partial \Om,
\end{equation}
where   $\mu_i$, $ i=0,\cdots, N-1$ are functions to be
determined.
  We insert into \eqref{peg1} the function $ m^{(N)}$, given by  the Ansatz \ref{main},   and
$\mu^{(N-1)}$ given by \eqref{2.09}, where both are evaluated  at
$\Gamma:=\G^{(N)}_t$, for  $\G^{(N)}_t$ the solution of
\eqref{e0}. Therefore, we obtain $(N-1)$ Laplace equations  for
$\mu_i(\cdot, \G^{(N)}_t) $, $i=1,\cdots,(N-1)$. The compatibility
conditions are needed in order to solve  these equations and
determine $\langle V_j \rangle (\G^{(N)}_t)$  for
$j=0,\cdots,(N-1)$.

\vskip0.5cm\noindent

When differentiating $m^{(N)} (\cdot, \G^{(N)}_t) $ with respect
to $t$ we  need to  take into  account that $m^{(N)}$ depends on
$\G_t$ through a fast and slow scale.  The fast scale brings a
factor $\e^{-1}$.

\begin{defin}
Let $m$ be a function from ${\mathcal M}$ to  $C^\infty(\Om)$  of
the type $ h  \Big{( }\frac { d(\xi,\G)} \e, s(\xi,\G) \Big{)}$
and $V$ be a vector field on   $\MM$. We define
\begin{equation}
\label{gi.3} D_V m (\xi,\G) := \frac 1 \e   h' \Big{(} \frac {
d(\xi,\G)} \e, s(\xi,\G)  \Big{)} V (s(\xi)),
\end{equation} where the prime indicates the derivative of $h$ with respect to the first variable $
z=\frac { d(\xi,\G)} \e$.

In addition, for any $W_N := \sum_{j=0 } ^{N-1} \e^j V_j $ with
$V_0,\cdots,V_{N-1}$ vector fields on $\MM$, we define
\begin{equation}\label{e3}
D_{W_N} m (\xi,\G) :=  \sum_{j=0 }^{N-1} \e^j D_{V_j} m (\xi,\G).
\end{equation}
\end{defin}

 Note that by the orthogonality of $\nabla_{\xi} d$
with respect to the surface there
 is no contribution  in \eqref {gi.3} from
$s(\xi,\G)$. Therefore, cf. \cite{CCO} for the detailed
computations, differentiating \eqref{e1} with respect to $t$,
applying the chain rule at the right-hand side (here the velocity
will appear since $m_j$ are defined on $\Gamma$) and then using
\eqref{e2}, \eqref{gi.3} and \eqref{e3}, we arrive at
\begin{equation}
\label{flux1} \begin{split}    \partial_t   m^{(N) }  =&
D_{W_N}(m^{(N)})  = D_{V_0}m_0
+ \e \left[ D_{V_1}m_0 + D_{V_0}m_1\right]\\
&+ \e^2 \left[ D_{V_1}m_1 + D_{V_0}m_2 +   D_{V_2}m_0\right]
+\cdots+
    \e^{N-1} \left [ \sum_{i=0}^{N-1} D_{V_i}m_{N-1-i}  \right] + R_{N} +
    E,  \end{split}
  \end{equation}
where
\begin{equation}
\label{flux20} R_{N}    \equiv  \e^{N}    \left [ \sum_{i=0}^{N-1}
D_{V_i}m_{N-i} \right] + {\mathcal O}(\e^{N}).   \end{equation}
The $m_0$ is the function defined in \eqref {e1a} and the term
$E\equiv  E(\xi, t, \e)$ is  obtained by differentiating
${\displaystyle  r \Big{(}\frac {d(\xi,\G_t)} {\e_0 }\Big{)} }$,
the unimodal function appearing in the definition of $m_0$, with
respect to the velocity field. $E$ is given by
 \begin{equation}
\label{PA.2}  E = \frac 1{\e_0 } r'\Big{(}\frac {d(\xi,\G_t)}
 {\e_0}\Big{)}  \left [ \sum_{i=0}^{N-1} \e^i V_i(\s(\xi),t) \right ]
\Big{ \{}   \bar m-  \Big{ [} \1_{\{d(\xi, \G_t)>0 \}} -
\1_{\{d(\xi, \G_t)<0 \}}  \Big{ ]}    \Big{ \}}.
\end{equation}
  Note that $E$ is exponentially small since $r'$ is
different from zero only for $ \frac {\e_0}{2 \e} \le |z| \le
\frac { \e_0 } {\e} $ while $\bar m$ converges exponentially fast
to $\pm 1$ as $z\rightarrow \pm \infty$, \cite{CCO}. Taking into
account \eqref{flux1} and
  \eqref{peg1} we    obtain  a set of $N$ equations for the $\mu_i$,
$i=0,\cdots, N-1$.

\vskip0.5cm  \noindent {\bf Zero order term    in $ \e$: }

\begin{equation}
\label{2.010}  \left \{ \begin {split} &
 D_{V_0}m_0 \equiv \frac 1  \e V_0   m'=
 \Delta
\mu_{0}   + G_{1,0}   \qquad \hbox {for}\qquad  \xi \in \Om,  \quad   t \in [0,T],
\cr &      \partial_ \nu \mu_0 = 0  \quad \hbox {on} \quad \partial \Om,   \end {split} \right .
      \end{equation}
where $G_{1,0}$ is the zero order term in $\e$ of $G_1$.

\vskip0.5cm \noindent {\bf  First order term    in $ \e$: }

\begin{equation}
\label{2.011}    \left \{ \begin {split} & \Big{[ }D_{V_1}m_0 +
D_{V_0}m_1\Big{]}
   =
\Delta \mu_1  + G_{1,1} \qquad \hbox {for} \qquad  \xi \in \Om, \quad   t \in [0,T], \cr &
  \partial_\nu \mu_1 = 0  \quad \hbox {on} \quad \partial \Om.
  \end {split} \right .
 \end{equation}

\vskip0.5cm  \noindent {\bf n-th order term    in $ \e$  ($n \le
N-1$): }

\begin{equation}
\label{2.112}      \left \{ \begin {split} &  \left[ \sum_{i=0}^n D_{V_i}m_{n-i}  \right]
     =
  \Delta
\mu_n  + G_{1,n} \qquad \hbox {for} \qquad  \xi \in \Om, \quad   t \in [0,T],
\cr &
 \partial _ \nu \mu_n = 0  \quad \hbox {on} \quad \partial \Om.
  \end {split} \right .
   \end{equation}

\vskip0.5cm \noindent  {\bf Remainder  term:} The remainder term,
see \eqref{flux20}  and \eqref{PA.2}, is  given by
\begin{equation}
\label{remainder} R_1( \xi, t, \e) =   \e^N G_{1,N} (\xi,\e) +
R_{N} (\xi,t,\e)+    E(\xi, t, \e).    \end{equation} Since the
derivative in $R_N$ (cf. \eqref {flux20}) brings down a factor
$\e^{-1}$ then this yields easily the next estimate for $R_1$
\begin{equation}
\label{Ap.1} \sup_{ (\xi,t) \in \Om \times [0,T]} | R_1 (\xi,t)|
\le C(T) \e^{N-1} . \end{equation}
 Further, one   gains  an extra power of
$\e$  when integrating  $R_1$, since the terms  of order
$\e^{N-1}$   have support in  $\NN(\e_0)$
\begin{equation}
\label{Ap.2}
 \sup_{  t \in   [0,T]} \int_{\Om}  | R_1 (\xi,t,\e)| {\rm d } \xi  \le C(T) \e^N.  \end{equation}

In the sequel, we prove existence and uniqueness (up to a
constant) of solutions of the equations obtained so far at
different orders. In Lemma \ref{230} and in Lemma \ref{231} we
consider the first and second order equation respectively.
Finally, in Lemma \ref{234} we outline the proof for solving the
equation to a generic order. In the next lemma we write in an
explicit way the dependence of the mean velocity on $\e$. This is
done in order to get easily the leading velocity field governing
the interfacial flows, see \eqref {e0}.

 \begin{lem} \label{230}
 Under the conditions
 \begin{equation}
\label{zcond1} \begin {split}  &
 V_0 (\cdot,\G_t)= V^{(0)}_0(\cdot,\G_t) + \langle V_0\rangle_{\eightpoint
\Gamma_t}[1+    c_0(\e)],  \cr &  \int_{\G_t}V^{(0)}_0(\eta
,\G_t) \dse  =0, \quad \langle V_0\rangle_{\eightpoint \Gamma_t}
 =   \frac 1 {2 |\G_t|}  \int_\Om  G_{1,0} (\eta)  {\rm d}\eta\;\;\;\;  t \in [0,T] ,  \end {split}
   \end{equation}
where $c_0(\e)$ defined in \eqref {ce1} goes to zero
exponentially fast  as $\e \to 0$,
 there exists a unique solution (up to constants in $\xi$) of \eqref{2.010} given by
\begin{equation}
\label{peg1as1} \mu_0 (\xi,\G_t) = \mu_{0,0}(\xi,\G_t) + \tilde
\mu_0 (\xi,\G_t),  \end{equation} where
\begin{equation}
\label{j1}  \mu_{0,0}(\xi,\G_t) =
 \int_\Om G(\xi,\eta)\left(
{1\over \e}m_0'\left({d(\eta,\G_t)\over
\e}\right)V^{(0)}_0(s(\eta),\G_t)\right) {\rm d}\eta   +   c_0(t).
\end{equation} Here, $c_0(t)$ is a constant (in $\xi$) to be
determined, and
\begin{equation}
\label{j2} \tilde \mu_0 (\xi,\G_t)  =  \langle
V_0\rangle_{\eightpoint \Gamma_t} [1+    c_0(\e)]
 \int_\Om G(\xi,\eta)\left(
{1\over \e}m_0'\left({d(\eta,\G_t)\over \e}\right) \right) {\rm
d}\eta -    \int_\Om G(\xi,\eta)G_{1,0} (\eta)  {\rm d}\eta.
\end{equation}
The term $\mu_0$ is in $C^\infty (\Om)$ for any $ t \in [0,T]$.
\end{lem}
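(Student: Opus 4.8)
The plan is to read \eqref{2.010} as an inhomogeneous Neumann problem for the Laplacian,
\[
\Delta\mu_0(\xi,\G_t)=\tfrac1\e V_0(s(\xi),\G_t)\,m_0'\!\big(\tfrac{d(\xi,\G_t)}{\e}\big)-G_{1,0}(\xi)\ \hbox{ in }\Om,\qquad \partial_\nu\mu_0=0\ \hbox{ on }\partial\Om,
\]
and to invoke the Fredholm alternative for the Neumann Laplacian: the problem has a solution, unique up to an additive constant in $\xi$, exactly when the right-hand side has vanishing average over $\Om$, and then the solution is recovered by convolving against the Neumann Green function $G$ of \eqref{2.1000}. So the argument splits into (i) showing that the conditions \eqref{zcond1} on $V_0$ are precisely the solvability condition, (ii) writing the Green-function representation and decomposing it into the pieces \eqref{peg1as1}, \eqref{j1}, \eqref{j2}, and (iii) checking that $\mu_0(\cdot,\G_t)\in C^\infty(\Om)$.

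For (i) I would compute $\int_\Om \tfrac1\e V_0(s(\xi),\G_t)\,m_0'(d(\xi,\G_t)/\e)\,{\rm d}\xi$ by passing to the coordinates $(d,s)$ on the tube $\NN(\e_0,\G_t)$ — outside of which the integrand vanishes, since $m_0'$ and the cut-off are supported there — using the Jacobian $\alpha(d,s)=1-dK(s)$ of \eqref{8.15a} and then rescaling $z=d/\e$. The integral becomes $\big(\int_{\G_t}V_0(s,\G_t)\,{\rm d}S_s\big)\big(\int m_0'(z)\,{\rm d}z\big)$ plus a curvature term carrying $\int z\,m_0'(z)\,{\rm d}z$, and the latter vanishes because $m_0'$ is even, so curvature drops out entirely. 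Inserting $V_0=V_0^{(0)}+\langle V_0\rangle_{\eightpoint\G_t}[1+c_0(\e)]$, the $V_0^{(0)}$ contribution is killed by $\int_{\G_t}V_0^{(0)}\,\dse=0$, leaving $\langle V_0\rangle_{\eightpoint\G_t}[1+c_0(\e)]\,|\G_t|\,I(\e)$ with $I(\e):=\int_{\R}m_0'(z)\,{\rm d}z$, which equals $2$ up to an error exponentially small in $\e$ coming from the cut-off (the constant $c_0(\e)$ is defined precisely to absorb this error, through $[1+c_0(\e)]\,I(\e)=2$). Then the compatibility identity $\int_\Om(\hbox{RHS})\,{\rm d}\xi=0$ reads $2|\G_t|\,\langle V_0\rangle_{\eightpoint\G_t}=\int_\Om G_{1,0}(\eta)\,{\rm d}\eta$, i.e. \eqref{zcond1}; the component $V_0^{(0)}$ remains undetermined at this stage and is fixed later in Lemma \ref{42}.

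Granting solvability, for (ii) I set
\[
\mu_0(\xi,\G_t)=\int_\Om G(\xi,\eta)\Big(\tfrac1\e m_0'\!\big(\tfrac{d(\eta,\G_t)}{\e}\big)\,V_0(s(\eta),\G_t)-G_{1,0}(\eta)\Big)\,{\rm d}\eta+c_0(t);
\]
by \eqref{2.1000} and the just-verified vanishing of the average, the constant term $-1/|\Om|$ in $\Delta G$ contributes nothing and this $\mu_0$ solves the Neumann problem, with $c_0(t)$ the residual additive constant. Splitting $V_0$ once more into its mean-zero and constant parts exhibits $\mu_{0,0}$ of \eqref{j1} (to which the constant $c_0(t)$ is attached) and $\tilde\mu_0$ of \eqref{j2}. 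For (iii), smoothness of $\mu_0$ in $\xi$ follows from $G_{1,0}\in C^\infty(\Om)$ (Assumption A1), from smoothness of $\bar m$, of the cut-off $r$, and of $d(\cdot,\G_t)$, $s(\cdot,\G_t)$ on $\NN(\e_0,\G_t)$ for smooth $\G_t$, together with elliptic regularity for the smooth Neumann problem; uniqueness up to an additive constant is the standard fact that a function harmonic in $\Om$ with vanishing normal derivative on $\partial\Om$ is constant. The only step I expect to require real care is the bookkeeping in (i): isolating the exponentially small cut-off correction $c_0(\e)$ and confirming that $V_0^{(0)}$ makes no contribution to the compatibility integral — the rest is routine potential theory and elliptic regularity.
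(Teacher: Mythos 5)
Your proposal is correct and follows essentially the same route as the paper: the solvability condition for the Neumann problem is computed in local tube coordinates, the curvature term drops out by the evenness of $m_0'$, the exponentially small cut-off defect is absorbed into $c_0(\e)$ exactly as in \eqref{ce1}, and the solution is then written via the Neumann Green function and split according to the decomposition of $V_0$. The only difference is that you spell out the Green-function representation and the regularity step in more detail than the paper, which simply invokes ``potential theory'' at that point.
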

 \begin{proof} Because  $ \partial_ \nu\mu_0  = 0$   on  $\partial \Om$,
 the solvability of \eqref{2.010} requires that for all $t \in [0,T]$
\begin{equation}
\label{exact1}
 \int_\Om \left(
{1\over \e}m_0'\left({d(\eta,\G_t)\over
\e}\right)V_0(s(\eta),t)\right) {\rm d}\eta  - \int_\Om  G_{1,0}
(\eta)  {\rm d}\eta
   =0 .    \end{equation}
In two dimensions (note that in three dimensions there will be
extra terms), by using local coordinates it follows that
  \begin {equation}  \label {rm.5}  \begin  {split}
  & \int_\Om \left(
{1\over \e}m_0'\left({d(\eta,\G_t)\over
\e}\right)V_0(s(\eta),t)\right) {\rm d}\eta = \int_{\NN(\e_0)}
\left( \frac 1 \e m_0'\left({d(\eta,\G_t)\over
\e}\right)V_0(s(\eta),t)\right) {\rm d}\eta  \cr & = \frac 1 \e
\int_{\G} \int_{-\frac   {\e_0} \e}^{\frac  {\e_0} \e} m_0'(z)
V_0(s,t)  \e(1-\e zK(s)){\rm d}s{\rm d}z\cr &= \int_{\G}
\int_{-\frac  {\e_0}\e}^{\frac  {\e_0} \e}  m_0'(z) V_0(s,t) {\rm
d}s{\rm d}z - \e \int_{\G} \int_{-\frac {\e_0} \e}^{\frac {\e_0}
\e}  z m_0'(z) K(s) V_0(s,t) {\rm d}s{\rm d}z \cr  &
 =  \int_{\G} \int_{-\frac  {\e_0} \e}^{\frac  {\e_0} \e}  m_0'(z) V_0(s,t) {\rm d}s{\rm d}z  =
 2(1- e^{-\frac  {\e_0} \e})
\int_{\G_t}V_0(\eta ,\G_t) \dse.  \end {split}
\end{equation}
The last line holds true since $m_0'$ is even and exponentially
decreasing. Replacing  now \eqref  {rm.5} in \eqref{exact1}, we
obtain
\begin{equation}\label{help1} \int_\Om
G_{1,0} (\eta)  {\rm d}\eta=    2 (1- e^{-\frac  {\e_0} \e})
\int_{\G_t}V_0(\eta ,\G_t) \dse.
\end{equation}
 Taking into account the splitting  $V_0= V^{(0)}_0 +\langle V_0\rangle_{\eightpoint \Gamma}[1+    c_0(\e)] $,
 (cf. the first and second equality in \eqref{zcond1}),
and using \eqref{help1}, we arrive at
$$
 \int_{\Gamma_t}V_0 (\eta,\G_t)dS_\eta =  |\Gamma_t|\langle V_0\rangle_{\eightpoint \Gamma} [1+    c_0(\e)] =    \frac{1}{2}  (1+    c_0(\e)) \int_\Om
G_{1,0} (\eta)  {\rm d}\eta,
$$
where
\begin {equation}  \label {ce1} c_0(\e):= \frac { e^{-\frac  {\e_0} \e}} {   (1- e^{-\frac  {\e_0} \e})}.
\end {equation}
This forces to take $V_0$ satisfying the third relation of
\eqref{zcond1}. By potential theory, once the compatibility
condition  is satisfied, the solution is given by \eqref
{peg1as1}.
\end{proof}
\begin {rem} Note that  $  \langle V_0\rangle_{\eightpoint
\Gamma_t} $ and  $ \tilde \mu_0 (\xi,\G_t) $ are  completely
determined once we know $m_0$. The quantity $
\mu_{0,0}(\xi,\G_t)$  depends on $c_0(t)$ and $V^{(0)}_0$.  These
quantities  will be  determined when proving Theorem \ref {re.3}.
\end {rem}
\subsection{The first order term in $ \e$ } For the derivation
of the first order correction we need to prove the solvability of
equation \eqref {2.011}.

 \begin{lem}\label {231}   There exists a unique (up to constants in $\xi$) solution $\mu_1$ of
\eqref{2.011}
 provided that
\begin{equation}
\label{ov.2}
V_1(\G_t) \equiv   V_1^{(0)}(\G_t) + \langle
V_1\rangle_{\eightpoint \G_t },  \end{equation} with
\begin{equation}
\label{exact2} \int_{\G_t} V^{(0)}_1(\eta, \G_t)\dse =   0,     \qquad
\forall t \in [0,T],  \end{equation} and $\langle
V_1\rangle_{\eightpoint \G_{t}}$  chosen  according to
$$ \langle V_1\rangle_{\eightpoint \G_t } =
\frac 1{ 2 |\G_t|} [1+c_0(\e)] \left [  \int_{\Om} G_{1,1}(\xi)d\xi    -  b_1(t) \right ], $$ where $b_1(t)$ is defined in
\eqref{svm2} and $c_0(\e)$ in \eqref {ce1}. The solution  is
given  by
\begin{equation}
\label{rep1} \mu_1 (\xi,t) = \mu_{1,0}(\xi,t)+  \tilde
\mu_{1}(\xi,t),  \end{equation} where  $\tilde \mu_{1}$ is defined
in \eqref{div3}, while
\begin{equation}
\label{rep2}
 \mu_{1,0}(\xi,t) = \int_\Om G(\xi,\eta)\left(
{1\over \e}m_0'\left({d(\eta,\G_t)\over
\e}\right)V^{(0)}_1(s(\eta),t)\right) {\rm d}\eta  +   c_1(t).
\end{equation}
Here, $c_1(t) $ is a constant (in $\xi$) to be determined. In
addition, the solution  is a $C^\infty (\Om ) $ function for any
$t\in [0,T]$.
\end{lem}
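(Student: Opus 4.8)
The plan is to follow \emph{verbatim} the scheme of the proof of Lemma~\ref{230}. Equation~\eqref{2.011} is a Neumann problem $\Delta\mu_1 = F_1$ in $\Om$, $\partial_\nu\mu_1 = 0$ on $\partial\Om$, with $F_1 := D_{V_1}m_0 + D_{V_0}m_1 - G_{1,1}$. By the standard solvability theory for the Neumann Laplacian on the bounded smooth connected domain $\Om$ (self-adjoint, kernel the constants), a solution exists, and is unique up to an additive constant in $\xi$, if and only if $\int_\Om F_1(\xi,t)\,{\rm d}\xi = 0$ for every $t\in[0,T]$; since $m_0$, $h_1$, $\phi_1$ and $G_{1,1}$ are $C^\infty(\Om)$, so is $F_1(\cdot,t)$, and hence so is the solution. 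Thus the whole content of the lemma is to turn the solvability condition into the stated prescription for $\langle V_1\rangle_{\eightpoint\G_t}$ and then to write the solution by potential theory.

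First I would compute $\int_\Om D_{V_1}m_0\,{\rm d}\xi$. Since, by~\eqref{gi.3}, $D_{V_1}m_0 = \tfrac1\e m_0'(d/\e)\,V_1(s)$ is supported in $\NN(\e_0)$, the change of variables used in~\eqref{rm.5} (local coordinates $(z,s)$, $z = d/\e$, Jacobian $\e(1-\e zK(s))$) together with the fact that $m_0'$ is even and exponentially decaying gives $\int_\Om D_{V_1}m_0\,{\rm d}\xi = 2(1-e^{-\e_0/\e})\int_{\G_t}V_1(\eta,\G_t)\dse$, the $zK(s)$-term dropping by parity. For $\int_\Om D_{V_0}m_1\,{\rm d}\xi$ I would use the decomposition $m_1 = h_1(d/\e,s)+\phi_1$ from~\eqref{e2}: in the same coordinates the leading $z$-integral of $h_1'$ vanishes because $h_1\equiv 0$ on $\partial\NN(\e_0)$, so that contribution is lower order, and together with the contribution of $\phi_1$ it assembles into the quantity $b_1(t)$ of~\eqref{svm2}, which depends only on the already-constructed data $V_0$, $h_1$, $\phi_1$. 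Imposing $\int_\Om F_1 = 0$ then reads $2(1-e^{-\e_0/\e})\int_{\G_t}V_1\dse + b_1(t) = \int_\Om G_{1,1}\,{\rm d}\xi$, i.e. $\int_{\G_t}V_1\dse = \tfrac12[1+c_0(\e)]\bigl(\int_\Om G_{1,1}\,{\rm d}\xi - b_1(t)\bigr)$ with $c_0(\e)$ as in~\eqref{ce1}; dividing by $|\G_t|$ gives exactly the stated value of $\langle V_1\rangle_{\eightpoint\G_t}$. This constraint only fixes the mean of $V_1$ over $\G_t$, so the mean-zero part $V_1^{(0)}$ is still free and one may impose~\eqref{exact2}.

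Once the compatibility condition holds, potential theory finishes the proof: since $F_1(\cdot,t)$ has zero mean, $\mu_1(\xi,t) = \int_\Om G(\xi,\eta)F_1(\eta,t)\,{\rm d}\eta + c_1(t)$ solves~\eqref{2.011}, where $G$ is the Neumann Green function~\eqref{2.1000}. Splitting $F_1$ into its $V_1^{(0)}$-part, namely $\tfrac1\e m_0'(d/\e)V_1^{(0)}(s)$, which by~\eqref{exact2} itself has zero mean, and the remainder $\tfrac1\e m_0'(d/\e)\langle V_1\rangle_{\eightpoint\G_t} + D_{V_0}m_1 - G_{1,1}$, which therefore also has zero mean, yields the decomposition $\mu_1 = \mu_{1,0}+\tilde\mu_1$ of~\eqref{rep1}, with $\mu_{1,0}$ of the form~\eqref{rep2} and $\tilde\mu_1$ the Green potential of the remainder, i.e.~\eqref{div3}; the constant $c_1(t)$ is the residual gauge freedom to be fixed later, and $\mu_1(\cdot,t)\in C^\infty(\Om)$ because the Green operator preserves $C^\infty(\Om)$ on a smooth domain. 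The one genuinely nontrivial step is the bookkeeping for $\int_\Om D_{V_0}m_1\,{\rm d}\xi$: one must check that the contributions of the not-yet-determined $h_1$ and $\phi_1$ really collapse into a single quantity $b_1(t)$ built only from data produced at earlier stages, so that the linear equation for $\langle V_1\rangle_{\eightpoint\G_t}$ is genuinely solvable; the remainder of the argument is a line-by-line repetition of Lemma~\ref{230}.
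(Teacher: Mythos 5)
Your proposal is correct and follows essentially the same route as the paper's proof: the Neumann solvability condition, the local-coordinate computation of $\int_\Om D_{V_1}m_0\,{\rm d}\xi$ exactly as in \eqref{rm.5}, the packaging of $\int_\Om D_{V_0}m_1\,{\rm d}\xi$ into the already-determined quantity $b_1(t)$, and the Green-function representation split into $\mu_{1,0}$ and $\tilde\mu_1$. The only difference is cosmetic (you spell out the Fredholm alternative for the Neumann Laplacian and the zero-mean bookkeeping of each piece of $F_1$ slightly more explicitly than the paper does).
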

 \begin{proof}
The solvability of \eqref{2.011} requires that
\begin{equation}
\label{cond1} \int_\Om \Big{ [}     D_{V_1}m_0 + D_{V_0}m_1 \Big{
]} d\xi   -  \int_\Om G_{1,1}(\xi)d\xi
 =0,
\end{equation} for any $ t \in [0,T]$.
Here, we are assuming that   $m_1$, $m_0$  and  $V_0$ are  already
determined   and so we define
\begin{equation}
\label{svm2} b_1(t)  = \int_\Om   D_{V_0}m_1    {\rm d}\xi  .
\end{equation} Proceeding as in \eqref
{rm.5}     we obtain
\begin {equation}
\label{svm4}  \int_\Om D_{V_1}m_0 {\rm d}\xi    = 2  (1- e^{-
\frac  {\e_0} \e } )   \int_{\G_t} V_1(\eta)\dse .
 \end {equation}
Taking into account the decomposition \eqref{ov.2}  and relation
\eqref{exact2}  we have
$$        \int_{\G_t} V_1(\eta, \G_t)\dse=  |\G_t| \langle
V_1\rangle_{\eightpoint \G_t }.  $$ Therefore, relation
  \eqref{cond1} is satisfied  if

\begin{equation}
\label{svm5} \langle V_1\rangle_{\eightpoint \G_t }  =   {1\over
2 |\G_t|} [1+c_0(\e)] \left [   \int_\Om G_{1,1}(\xi)d\xi
-b_1(t)\right ],
\end{equation}
where $c_0(\e)$ is defined in \eqref {ce1}. This determines
$\langle V_1\rangle_{\eightpoint \G_t }$, the projection of
$V_1(\G_t)$ onto the constants. The solution  of \eqref{2.011}
exists  and  it is given by
 \begin{equation}
\label{C.2}
 \begin{split} & \mu_1 (\xi, t ) = \int_\Om G(\xi,\eta)
   \Big{[} D_{V_1}m_0 + D_{V_0}m_1 \Big{]}
  {\rm d}\eta   -   \int_\Om G(\xi,\eta)  G_{1,1}(\eta) {\rm d}\eta+
 c_1(t).  \end{split}  \end{equation}
Since we shall use the  decomposition \eqref{ov.2},  it is
convenient to write
\begin{equation}
\label{div} \mu_1 (\xi,t) = \mu_{1,0}(\xi,t)+  \tilde
\mu_{1}(\xi,t),   \end{equation} where $\mu_{1,0}(\xi,t)$ is
given in \eqref{rep2} and $\tilde\mu_1$ is defined as follows
\begin{equation}
\label{div3}
  \tilde \mu_1 (\xi,t)=  \int_\Om G(\xi,\eta)
     D_{V_0}m_1
  {\rm d}\eta     +  \langle V_1\rangle_{\eightpoint \G_t } \int_\Om G(\xi,\eta)\left(
{1\over \e}m_0'\left({d(\eta,\G_t)\over \e}\right) \right) {\rm
d}\eta     - \int_{\Om} G(\xi,\eta) G_{1,1}(\eta) {\rm d}\eta.
   \end{equation}
\end{proof}

\begin{lem}\label {234}  The solution $\mu_j (\cdot,t)$ of \eqref {2.112}
for $2\le j \le N-1$ exists  and is unique (up to constant in
$\xi$) provided that
\begin{equation}  \label{svm41}
V_j(\G_t)    \equiv V_j^{(0)}(\G_t) + \langle
V_j\rangle_{\eightpoint \G_t } ,
\end{equation}
\begin{equation}
\label{svm40} \int_\G V_j^{(0)}(s,\G_t){\rm d}s = 0, \qquad \forall  t \in
[0,T],
\end{equation}
 and $\langle V_j\rangle_{\eightpoint \G_t }$
is chosen according to \eqref{svm50}. It is given by
\begin{equation}
\label{rep10} \mu_j (\xi,t) = \mu_{j,0}(\xi,t)+  \tilde
\mu_{j}(\xi,t),   \end{equation} where
\begin{equation}
\label{rep22}
   \mu_{j,0}(\xi,t) = \int_{\Om} G(\xi,\eta)\left(
\frac 1 \e m_0'\left({d(\eta,\G_t)\over
\e}\right)V^{(0)}_j(s(\eta),t)\right) {\rm d}\eta + c_j(t),
\end{equation} and
\begin{equation}
\label{div4}
 \begin{split}  \tilde \mu_j (\xi,t)=&  \int_{\Om} G(\xi,\eta)    \left[ \sum_{n=0}^{j-1} D_{V_n}m_{j-n}
\right]
       {\rm d}\eta  \\ &  +  \langle V_j\rangle_{\eightpoint \G_t } \int_{\Om} G(\xi,\eta)\left(
\frac 1 \e m_0'\left(\frac {d(\eta,\G_t)} \e \right) \right) {\rm
d}\eta    +     \int_\Om G(\xi,\eta)  G_{1,j}(\eta) {\rm d}\eta.
   \end{split} \end{equation}
The solution $\mu_j (\cdot,t)$ for $t \in (0,T]$ is a $C^\infty
(\Om) $ function.   \end{lem}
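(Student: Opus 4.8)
The plan is to argue by induction on $j$, mimicking Lemma~\ref{230} and Lemma~\ref{231} almost verbatim and only pointing out the changes. I would assume that $V_0,\dots,V_{j-1}$, $m_0,\dots,m_j$ and $\mu_0,\dots,\mu_{j-1}$ have already been produced (with the mean-free fields $V_i^{(0)}$ and the $\xi$-constants $c_i(t)$ to be pinned down later in Theorem~\ref{re.3}), and read \eqref{2.112} as a Neumann problem for $\mu_j$ on $\Om$ with right-hand side $\sum_{i=0}^{j}D_{V_i}m_{j-i}-G_{1,j}$. By the Fredholm alternative for the Neumann Laplacian it is solvable --- and the solution is unique up to an additive constant in $\xi$ --- if and only if the right-hand side has zero average over $\Om$, i.e.
\begin{equation*}
\int_\Om\Big[\sum_{i=0}^{j}D_{V_i}m_{j-i}\Big]\,{\rm d}\xi \;=\; \int_\Om G_{1,j}(\xi)\,{\rm d}\xi, \qquad t\in[0,T].
\end{equation*}

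Next I would isolate in this identity the single term carrying the still-unknown $V_j$, namely $D_{V_j}m_0 = \frac1\e m_0'\big(d(\cdot,\G_t)/\e\big)\,V_j(s(\cdot))$, and abbreviate $b_j(t):=\int_\Om\sum_{n=0}^{j-1}D_{V_n}m_{j-n}\,{\rm d}\xi$, a quantity already known by the inductive hypothesis. Repeating the local-coordinate computation of \eqref{rm.5} --- the point being that $m_0'$ is even and exponentially decaying, so the curvature term $\e\int z\,m_0'(z)K(s)V_j(s)\,{\rm d}s\,{\rm d}z$ vanishes --- yields $\int_\Om D_{V_j}m_0\,{\rm d}\xi = 2\big(1-e^{-\e_0/\e}\big)\int_{\G_t}V_j(\eta)\,\dse$. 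Substituting the splitting \eqref{svm41} together with \eqref{svm40}, so that $\int_{\G_t}V_j\,\dse=|\G_t|\,\langle V_j\rangle_{\eightpoint\G_t}$, turns the solvability condition into a single linear equation that determines $\langle V_j\rangle_{\eightpoint\G_t}$ uniquely; this is the prescription \eqref{svm50}, the exponential factor $c_0(\e)$ of \eqref{ce1} entering exactly as in Lemma~\ref{231}.

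With the compatibility condition in force, I would write the solution through the Neumann Green function $G$ of \eqref{2.1000},
\begin{equation*}
\mu_j(\xi,t)=\int_\Om G(\xi,\eta)\Big[\sum_{i=0}^{j}D_{V_i}m_{j-i}\Big]\,{\rm d}\eta-\int_\Om G(\xi,\eta)G_{1,j}(\eta)\,{\rm d}\eta+c_j(t),
\end{equation*}
and then separate off the contribution of $\frac1\e m_0'(d/\e)\,V_j^{(0)}(s)$ coming from $D_{V_j}m_0$ to recognise the decomposition $\mu_j=\mu_{j,0}+\tilde\mu_j$ of \eqref{rep10}, \eqref{rep22}, \eqref{div4}. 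The $C^\infty(\Om)$ regularity of $\mu_j(\cdot,t)$ for $t\in(0,T]$ would then follow from the smoothness of $G_{1,j}$, of $m_0$ and of the $m_n$, $V_n$ with $n<j$ (Ansatz~\ref{main}), together with interior elliptic regularity, just as in the preceding two lemmas.

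I expect the only real difficulty to be bookkeeping rather than analysis: one has to verify that the induction actually closes --- that at the moment \eqref{2.112} is solved the data $m_0,\dots,m_j$ and $V_0,\dots,V_{j-1}$ are genuinely in hand (in particular that $m_j$ has already been generated from the equation obtained by feeding the Ansatz into \eqref{peg2}), and that the leftover freedom, the constants $c_j(t)$ and the mean-free fields $V_j^{(0)}$, is exactly what is removed later in Theorem~\ref{re.3}, so that the ``unique up to a constant in $\xi$'' statement proved here is compatible with the essentially-unique determination asserted in Theorem~\ref{1.1}.
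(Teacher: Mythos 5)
Your proposal is correct and follows essentially the same route as the paper: the paper's proof of Lemma \ref{234} likewise reduces to the zero-average solvability condition \eqref{rm.1}, isolates $\int_\Om D_{V_j}m_0\,{\rm d}\xi = 2|\G_t|[1-e^{-\e_0/\e}]\langle V_j\rangle_{\eightpoint \G_t}$ via the computation of \eqref{rm.5}, defines $b_j(t)$ as in \eqref{svm30}, fixes $\langle V_j\rangle_{\eightpoint \G_t}$ by \eqref{svm50}, and then writes the solution through the Neumann Green function, leaving $V_j^{(0)}$ and $c_j(t)$ to Theorem \ref{re.3}. The only discrepancy is the sign of the $\int_\Om G(\xi,\eta)G_{1,j}(\eta)\,{\rm d}\eta$ term, where your minus sign is consistent with \eqref{div3} and the equation being solved, so the plus sign in the stated \eqref{div4} appears to be a typo in the paper rather than an error on your part.
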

 \begin{proof} The proof is analogous to the proof of Lemma \ref{231}.  The solution  exists if
\begin{equation}
\label{rm.1}
  \int_{\Om} \left[ \sum_{n=0}^j  D_{V_n}m_{j-n}  \right]  {\rm d}\xi -     \int_\Om G_{1,j}(\xi)d\xi
=0,   \end{equation} for any $ t \in [0,T]$. Here, $D_{V_n}m_{j-n}
$  for $n=0,\cdots,j-1$ are determined  and so, we define
\begin{equation}
\label{svm30} b_j(t)  =
  \int_{\Om} \left[ \sum_{n=0}^{j-1}  D_{V_n}m_{j-n}  \right]  {\rm d}\xi. \end{equation}
Requiring  \eqref {svm41}  and
  \eqref {svm40}
 we obtain
\begin{equation}
\label{svm45}   \int_{\Om} D_{V_j}m_0 {\rm d}\xi = 2|\G_t|  [1-
e^{- \frac {e_0} \e } ] \langle V_j\rangle_{\eightpoint \G_t }.
\end{equation} Hence, to fulfill relation \eqref{rm.1}, we must take
\begin{equation}
\label{svm50} \langle V_j\rangle_{\eightpoint \G_t }  =   \frac
1{ 2 |\G_t|} [1+c_0(\e)]  \left [  \int_\Om G_{1,j}(\xi)d\xi-
b_j(t)\right ],  \end{equation} where $c_0(\e)$ is defined in
\eqref {ce1}. This determines $\langle V_j\rangle_{\eightpoint
\G_t }$, the projection of $V_j(\G_t)$ onto the constants. It
still  remains to determine the orthogonal part $V_j^{(0)}$. The
solution of \eqref{2.112} exists and is given by \eqref{rep10}.
 \end{proof}
\subsection{Proof of Theorem \ref{re.2}}\label{subsecc3}

\noindent From Lemma \ref{230}, Lemma \ref{231} and Lemma
\ref{234}  we have that $  \mu^{(N-1)} $ satisfies by
construction \eqref{L.1b}. The remainder $R_1$ is defined in
\eqref{remainder} and estimated in \eqref{Ap.1} and \eqref{Ap.2}.
The term $\mu^{(N-1)} (\cdot, t)$ for $ t\in [0,T]$ satisfies the
homogeneous Neumann  boundary conditions by construction. Thus,
Theorem \ref{re.2} holds  true. \qed

\section {Derivation of the equations for $m^{(N)}$}
 Next
theorem assures the existence and (essential) uniqueness of the
functions $m_j$, $j=0,\cdots,N$, having the properties required in
the Ansatz \ref {main}.  Existence and uniqueness are obtained
provided that a certain compatibility condition is satisfied. This
determines  $ V^{(0)}_j$, the orthogonal part of the velocity
field. \vskip0.5cm
\begin {thm} \label{re.3}  Take $N>1$ and $G_2$ as in Assumption A1.
Let $T$ be the lifetime of  the solution of \eqref{e0} in
${\mathcal  M}$.  Let  $\mu^{(N-1)} (\cdot, t) $,  $t \in [0,T]$
be the function constructed   in Theorem
 \ref{re.2}. Then it is possible to choose the vector fields $V^{(0)}_j$ so that
  there exist
$m_j $, $j=0,\cdots,N$, having the properties prescribed in  the
Ansatz  \ref {main} such that
\begin{equation}
\label{G.1} \mu^{(N-1)}(\xi,t)  = -\e \Delta   m^{(N)}(\xi,t)
   +\frac 1 \e f(m^{(N)}(\xi,t))  - \sum_{j=0}^{N-1}  \e^j G_{2,j}(\xi) +  R_2(\xi,t, \e)   \qquad
\hbox {{\rm in}}\qquad
 \Omega \times (0,T],   \end{equation}
with $R_2$ given by \eqref{v.100}.
 Further, $m^{(N)}(\cdot,t) $ for $ t\in [0,T]$ is a $C^{\infty} (\Om)$ function that satisfies
the homogeneous Neumann  boundary conditions and
\begin{equation}
\label{NY.1} \sup_{\xi \in \Om} \sup_{t \in [0,T]} |R_2(\xi,t,
\e) | \le C \e^{N}.  \end{equation} Finally, the choice of the
term $V^{(0)}_j$ is specified by the equations
 \eqref{z.1}, \eqref{z.1A} and \eqref{zz.1} given below.
\end {thm}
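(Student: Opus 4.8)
The plan is to carry out the same Hilbert-expansion bookkeeping that produced Theorem~\ref{re.2}, but now for the second equation \eqref{peg2} rather than the first. We already have the approximate chemical potential $\mu^{(N-1)}(\xi,t)=\sum_{i=0}^{N-1}\e^i\mu_i(\xi,t)$ in hand, with the $\mu_i$ explicitly given by \eqref{peg1as1}, \eqref{rep1}, \eqref{rep10}. We substitute both $\mu^{(N-1)}$ and the ansatz \eqref{e1}--\eqref{e2} for $m^{(N)}$ into the relation $\mu^{(N-1)}=-\e\Delta m^{(N)}+\tfrac1\e f(m^{(N)})-\sum_j\e^j G_{2,j}+R_2$, expand everything in powers of $\e$, and express $\Delta$ in the $(z,s)$ coordinates on $\NN(\e_0)$ using the Jacobian \eqref{8.15a} (so $\Delta = \e^{-2}\partial_{zz} + \e^{-1}(\text{curvature terms})\partial_z + (\text{tangential part})$, matching the computation in \cite{CCO}). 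The key point is that the singular $\e^{-1}$ factor in front of $f$ combined with the $\e^{-2}\partial_{zz}$ from $\e\Delta$ means the leading ($\e^{-1}$, or zeroth after multiplying through) equation is the linearized Euler--Lagrange operator $\LL_0 h := -h'' + f'(m_0)h$ acting on the profile corrections $h_j(z,s)$, with $\mu_{j-\text{something}}$ and lower-order data on the right-hand side.

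Concretely, I would organize the proof order by order: at each order $n$ the equation for $h_n$ (the fast/inner part of $m_n$) reads $\LL_0 h_n = (\text{explicit expression in } \mu_i,\ m_{<n},\ K,\ G_{2,n})$, and this ODE in $z$ (with $s$ a parameter) is solvable in the class of functions decaying at $z\to\pm\infty$ precisely when the right-hand side is $L^2(\R,dz)$-orthogonal to $m_0'$, since $m_0'=\bar m'$ spans the kernel of the self-adjoint operator $\LL_0$. That orthogonality/solvability condition is exactly what pins down the outer (``$\phi$'') part of $m_n$ together with the orthogonal component $V^{(0)}_{j}$ of the velocity field: one matches the $z\to\pm\infty$ limits of $h_n$ to the boundary values of $\phi_n$ on either side of $\G$, and the jump condition that emerges is the Dirichlet--Neumann / potential-theoretic identity of the form \eqref{ms3}, i.e. $V^{(0)}_0 = [\partial_\nu\mu_{0,0,0}]_\G$ at leading order and analogous (linear, inhomogeneous) relations \eqref{z.1}, \eqref{z.1A}, \eqref{zz.1} at higher orders. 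The $C^\infty$ regularity of $m^{(N)}$ and the homogeneous Neumann conditions follow because the $\phi_j$ solve elliptic Neumann problems with smooth data (smoothness of $G_{2,j}$, of $\mu_i$, and of the geometry of $\G^{(N)}_t$ for $t<T$), and the $h_j$ are built from exponentially-decaying ODE solutions cut off away from $\NN(\e_0)$; the remainder $R_2$ collects all terms of order $\e^N$ and higher, all of which are uniformly bounded (using the Lipschitz bounds on $\phi_j$ and the exponential smallness of the cutoff error $E$-type terms), giving \eqref{NY.1}.

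For the uniqueness statement, I would argue as in \cite{CCO}: once $V_j$ and $m_j$ are fixed for $j<k$, the order-$k$ solvability condition determines $V^{(0)}_k$ and hence $\phi_k$ uniquely, and then $h_k$ is determined up to the one-dimensional kernel $\mathrm{span}(m_0')$; that residual freedom is an $\OO(\e^{k+1})$ ambiguity in $m^{(N)}$ and correspondingly in $V_k$, which is exactly the ``essential uniqueness'' asserted. The constants $c_j(t)$ left undetermined in Theorem~\ref{re.2} get fixed here by the same matching (the value of $\mu_i$ at the interface must be consistent with $2SK$-type terms), closing the loop between the two theorems.

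The main obstacle I expect is the solvability/orthogonality step at each order: one must verify that the accumulated right-hand side (which mixes the inner expansion of $\Delta\phi_j$, the curvature expansion of the Jacobian, the Taylor expansion of $f$ around $m_0$, and the already-constructed $\mu_i$) is, after subtracting the piece matched to $\phi_j$, genuinely orthogonal to $m_0'$ in $L^2(\R)$ — and to read off from the surviving term the correct potential-theoretic boundary-value problem for $V^{(0)}_j$. This is where the bulk of the computation lives (it is the analogue of the core lemmas of \cite{CCO}), and where one must be careful that the external-field contributions $G_{2,j}$ enter the interface boundary condition in the form $-G_{2,0}$ shown in \eqref{nov.3}/\eqref{ms2}, and that the $\langle V_j\rangle$ pieces already fixed in Section~3 are consistent with what the solvability condition here demands. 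Everything else — the elliptic regularity for $\phi_j$, the exponential decay estimates for $h_j$ and $E$, the bookkeeping of $R_2$ — is routine given the machinery already set up.
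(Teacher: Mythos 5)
Your overall strategy is the paper's: write the Laplacian in the $(z,s)$ coordinates, match powers of $\e$ into an inner equation on $\NN(\e_0)$ and an outer equation on $\Om\setminus\NN(\e_0)$, solve the outer equation for $\phi_n$ and extend it globally, and then treat the inner equation as $\LL h_n=A$ with $\LL=-\partial_{zz}+f'(\bar m)$, whose solvability condition $\int_\R A\,\bar m'\,dz=0$ determines $V^{(0)}_{n-1}$ (and the constants $c_{n-1}(t)$ left free in Theorem \ref{re.2}) through the single-layer representation of $\mu_{n-1,0}$ and the Dirichlet--Neumann operator. That is exactly Subsections 4.1--4.2 and Lemmas \ref{41}, \ref{42}, \ref{43}, \ref{44}.

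There is, however, one concrete device you are missing, and without it the step you yourself flag as the main obstacle does fail. The solvability condition at order $n$ involves integrals such as $\int_\R \mu_{n-1}(\e z,s)\left[1-\frac{f'(\bar m(z))}{f'(1)}\right]\bar m'(z)\,dz$, with the data evaluated at $\e z$ rather than on the interface, and moreover $\mu_{n-1,0}$ differs from its single-layer leading part $\mu_{n-1,0,0}$ by $O(\e)$. Consequently, after choosing $V^{(0)}_{n-1}$ the orthogonality to $\bar m'$ holds only up to an $O(\e)$ error, not exactly, and Lemma \ref{41} cannot be invoked as stated. The paper resolves this by adding and subtracting at each order a term $\e\,\alpha_n(s)\bar m'(z)$: the free function $\alpha_n(\cdot,\G)\in C^\infty(\G)$ absorbs the residual $O(\e)$ mismatch so that the compatibility condition is satisfied exactly, while the subtracted copy is carried to the next order (and ultimately produces the term $-\e^{N+1}\alpha_N\bar m'$ in the remainder \eqref{v.100}). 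The residual freedom in the kernel ${\rm span}(\bar m')$ that you invoke for the uniqueness statement is not a substitute for this, since the issue is existence of a bounded decaying solution, not its uniqueness. A second, smaller inaccuracy: the $h_n$ are built to decay to $0$ as $z\to\pm\infty$ (the far-field behaviour is carried entirely by $\phi_n$, already fixed by the outer equation), so $V^{(0)}_{n-1}$ is not obtained by matching the tails of $h_n$ to $\phi_n$; it drops out of the solvability condition itself, which forces the Dirichlet condition $\mu_{n-1,0,0}(0,s)=\tfrac12 B_{n-1}(s)$ on $\G$ and hence, via ${\mathcal T}_\G$, the formulas \eqref{z.1}, \eqref{z.1A} and \eqref{zz.1}.
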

\vskip0.5cm

In \eqref{peg2} we insert  at the left-hand side  the already
determined function $\mu^{(N-1)} (\cdot, \G^{(N)}_t)$ and we
obtain
\begin{equation}
\label{G.2} \mu^{(N-1)}(\xi, t)  = -\e \Delta     m (\xi,t)
   +  \frac{f( m  (\xi,t) )}{\eps}     -   G_2(\xi;\e)  \qquad
\hbox {{\rm in}}\qquad
 \Omega \times (0,T).    \end{equation}
Then $ \mu^{(N-1)}$  is  written  in terms of $ m^{(N)} $, chosen
according  to  the ansatz. Here, we prove  that  there exists a
unique way to find the function $ m^{(N)} $, having indeed the
property required in the ansatz and satisfying equation
\eqref{G.2} in a certain sense (to be specified in the sequel).

  The existence of  any $ m_j$, $j=0,\cdots,N$  is
obtained  provided  that a  compatibility  condition is
satisfied. This  compatibility  condition forces us to define
properly $V^{(0)}_j$, $j=0,\cdots,(N-1)$.

We distinguish two main steps:\\
 \noindent {\bf $\bullet$ Step
1:}
 \noindent  {\it Determination at any order  of the  equations. This is carried out in  the   Subsection \ref{suba}.}

\noindent {\bf $\bullet$  Step 2:}
 \noindent  {\it Analysis of  the equations derived   at  step 1.
 This will be done in the Subsection \ref {suba2}.}

In the present and in the next subsection $ \G_t$ is kept fixed,
 therefore, for the sake of a simpler notation we drop the subscript $t$, except
of the cases that a subscript use may add some clarity in our
arguments.

\subsection { Determination  of the  equations for $m_j$, $j=0, \cdots N$.
}\label{suba} To separate the fast and slow scale  of   $m^{(N)}$
near the surface $\G$, we  write the Laplacian  in the system of
local coordinates introduced in  Subsection 2.1.  The expansion
in $\e$ of the  Laplacian   written  in this coordinate system is
reported in  the Appendix, subsection  A.2.  We then match the
right and left terms of the equations
  having the same  power of  $\e$,  distinguishing  the case where  $     \xi
\in  \NN (\e_0)$ from the  one  with $     \xi \in \Omega\setminus
\NN (\e_0)$. We therefore, get  at any  order  two sets of
equations: one for  $     \xi \in  \NN (\e_0)$ and the other for
$     \xi \in \Omega\setminus  \NN (\e_0)$.  Since the interface
separates $\Om$ in  two regions we will distinguish those $ \xi
\in \Omega\setminus  \NN (\e_0)$ which are inside $\G$ from those
$     \xi \in \Omega\setminus  \NN (\e_0)$ which are outside
$\Gamma$.

Taking into account   formula \eqref{C.100} in  the Appendix,
denoting by $'$ the derivative with respect to $z$,  and by
$a_n$, $b_n$, $c_n$   the quantities defined in \eqref{coef},
after simple, however lengthy computations we obtain the
following  identity
\begin{equation}
\label{C.20}
 \begin{split}   \e^2 \Delta m^{(N)} (z,s)
= &\left \{
  \bar m''(z) +
  \sum_{n=1}^N \e^n  \left [   h''_n(z,s) +a_n (z,s)  \bar m' \right ] \right \}
 \\ & +
 \left \{
  \sum_{n=2}^N \e^n     \sum_{i=1}^{n-1}   a_{n-i}(z,s)
  h'_i (z,s)  + \sum_{n=3}^N \e^n \left [
\sum_{i=1}^{n-2}  b_{n-i}(z,s) \frac {d^2}{d s^2} h_i(z,s) \right
] \right.\\ &+  \left. \sum_{n=4}^N \e^n
  \sum_{i=1}^{n-3}c_{n-i}(z,s) \frac {d }{d s } h_i(z,s)   \right \}
+
 \e^2 \Delta \left [ \sum_{i=1}^N  \e^i
\phi_i(\xi)     \right ]  \cr & +E_1 (\xi,t, \e) + \e^{N+1}  A
(\xi,t, \e),  \end{split}   \end{equation} with
\begin{equation}
\label{ap.100}
 \sup_{(\xi,t) \in \Omega \times [0,T]} | A(\xi,t,
\e)| \le C(T),  \end{equation}
\begin{equation}
\label{ap.101}
 \sup_{ t  \in [0,T]} \int_\Omega d\xi| A(\xi,t,
\e)| \le  \e C(T),  \end{equation}

\begin{equation}
\label{no.1}
 \begin{split} &  E_1 (\xi,\e) \equiv
\e^2 \Delta  r (\frac {d(\xi,\G)} {\e_0}) \left \{    \bar m(\frac
{d(\xi, \G)} {\e })  -  \left  [ \1_{\{d(\xi, \G)>0 \}} -
\1_{\{d(\xi, \G)<0 \} }  \right ]   \right \}
 + 2 \e^2 \nabla r \cdot \nabla
\left [ \bar m(\frac {d(\xi, \G)} {\e })  \right]  \end{split},
\end{equation}
and \begin{equation} \label{st.2}
 \lim_{\e\to 0} \sup_{(\xi,t) \in \Om \times [0,T]} |E_1 (\xi,t,\e)| = 0.  \end{equation}

Let us now define $f_i$ such that
\begin{equation}
\label{C.3}
 \begin{split} &f(m^{(N)}) = f(m_0) + f'(m_0)  \left [\sum_{i=1}^N
 \e^i  m_i  \right ] +  \sum_{i=2}^N  \e^i f_i (m_0,m_1,..,m_{i-1})
 +   \e^{N+1}  B_{N+1}( \cdot , \e),  \end{split}  \end{equation}
\begin{equation}
\label{st.1}
 \sup_{ \xi \in \Omega, t \in [0,T]} |  B_{N+1}( \xi, t, \e)| \le C.  \end{equation}
One can easily obtain $f_i$ for any $i=2,\cdots, N$ by using
Taylor expansions up to $N-1$ order  for $f $  around $m_0$ and
collecting then the terms of the same power of $\e$.

We insert   \eqref{C.20} and  \eqref{C.3} into \eqref{G.2} and
equate terms having the same order in $\e$ (when estimated with
the $L^\infty (\Om)$ norm) obtaining, this way, two equations at
any order,  one for $ \xi \in \Omega \setminus \NN (\e_0)$, and
the other for $ \xi \in
  \NN (\e_0)$.  The  equation for $
\xi \in \Om \setminus \NN (\e_0)$ will determine $\phi_i$ which
are the slowly varying terms,  while the one   for $ \xi \in
  \NN (\e_0)$ will determine  $h_i$ i.e. the rapidly decaying terms.
When  deriving the equations  for   $ \xi \in \NN (\e_0)$ then
terms of the type $  \Delta \phi_i( \e z, s) $ appear.  The $
\phi_i$
 are
$C^\infty$ functions, since they are  proportional to $\mu_i$,
and they
 have the same type of singularity in $\e$ when
differentiated in $\xi$. So, the terms    $ \e^{n+1} \Delta
\phi_{n-1}( \e z, s)$  are of order  $  {\mathcal O}(\e^n)$, and
therefore, we write them  in the $\e^n$ order equation.

In  the following we will use the notation $ f (\pm 1)$  (or
$f(m(\pm \infty)))$   in $\Om \setminus \NN (\e_0)$. This refers
to the fact that $ \Gamma$ separates $\Om$ in two sets, i.e. $
\Omega^+$ which is the part outside $\Gamma$ and $ \Omega^-$ the
part inside $\Gamma$. Therefore, $\pm$ refers to different regions
of $(\Om \setminus \NN (\e_0)) \cap \Om^\pm $. It is convenient
to  write the external potential $G_{2, i}$, for $i=0, \cdots, N$
in local coordinates  when $ \xi \in \NN (\e_0)$.  We therefore
identify $G_{2, i}(\xi) = G_{2, i}(d(\xi,\G),s(\xi,\G))$,  for $i=0,
\cdots, N$.
 \vskip0.5cm

\noindent
 { \bf Zero order term in $ \e$: }
 Matching gives
\begin{equation}
\label{2.221} 0 = -  r \Big{(}\frac \e  {\e_0} z\Big{)}   m_0''(z)
+ f(m_0 (z))\;\;\;\;\hbox {for}\;\;\;\;z \in  \Big{[}-\frac {\e_0}
{  \e},\frac {\e_0}{  \e} \Big{]},
\end{equation}
and
\begin{equation}
\label{M.1}
 f(m_0 (\pm \infty) ) =0\;\;\;\;
\hbox {for}\;\;\;\;  \xi \in \Om \setminus \NN (\e_0).
\end{equation}
 
\vskip0.5cm

\noindent
 { \bf  First  order term   in $ \e$: }
Again matching gives for $z \in  \left [-\frac {\e_0}
{  \e},\frac {\e_0}{  \e} \right ]$ and $s \in \G$
\begin{equation}
\label{2.20}
 \begin{split} & \mu_{0}( \e z, s) = -
\left [   h''_1 (z,s) - K(s)m_0'(z) \right ] +f'( m_0)
 \left [ h_1(z, s )   +  \phi_1(\e z,
s)   \right ]  - G_{2,0} ( \e z,s)   \end{split}     \end{equation}
and
 \begin{equation}
\label{2.200}
  \mu_{0}( \xi) = f'( \bar m ( \pm \infty )) \phi_1(\xi)  - G_{2,0} ( \xi)
\;\;\;\;\hbox {for}\;\;\;\;\xi \in \Omega \setminus \NN (\e_0).
\end{equation}

\vskip0.5cm

\noindent
 { \bf Second   order term   in $ \e$: }
\begin{equation}
\label{2.m8}
 \begin{split}
\mu_1 (  \e z, s)     = & - \left [  h''_2(z,s)- K^2(s) z
m_0'(z)- K (s)
 h'_1(z,s ) \right] \\ & +f'(m_0(z)) \left [ h_2(z,s)   +   \phi_2(
  \e z, s)   \right ]  -\e  \Delta \phi_1( \e z, s)   + f_2(m_0,m_1)(\e z,s)  - G_{2,1} (\e z,s),   \end{split}
\end{equation}

\begin{equation}
\label{2.c8}
 \mu_1 ( \xi )  = f'(\bar m ( \pm \infty))
\phi_2( \xi)  + f_2(\bar m ( \pm \infty),\phi_1  (\xi)) -\e
\Delta \phi_1(\xi) - G_{2,1} (\xi)\;\;\;\;\hbox {for}\;\;\;\; \xi
\in \Om \setminus \NN (\e_0).    \end{equation}

More explicitly the $f_2$ term for $\xi \in  \NN (\e_0)$ is given
by
\[
   f_2(m_0,m_1) (\e z, s)  =
\frac 12  f''(m_0 (z))  \left [ h_1^2 (z,s ) + \phi_1^2( \e z, s)
+
   2\phi_1 ( \e z, s)
 h_1(z,s) \right],
\]
and by
 \[
  f_2( \bar m ( \pm \infty) ,\phi_1 (\xi) )  =  \frac 12  f''(\bar m ( \pm \infty))
\phi_1^2 (\xi),
\]
for $ \xi \in  \Om \setminus \NN (\e_0)$.

\vskip1.cm

\noindent
 { \bf n-th order term   in $ \e $ ($ 3\le n\le N $):}  
 \begin{equation}
\label{2.mm8}
  \begin{split}
\mu_{n-1} (  \e z, s)    =&  -
   h''_n(z,s) + a_n(z,s)   m'_0 (z)
     +
  \sum_{i=1}^{n-1} a_{n-i}(z,s)
 h_i'(z,s)
   \\ &  + \sum_{i=1}^{n-2} b_{n-i}(z,s) \frac {d^2}{d s^2} h_i (z,s)
  + \1_{\{n\ge 4\}} \sum_{i=1}^{n-3}   c_{n-i}(z,s)
\frac {d}{d s} h_i (z,s)
   \\& -    \e  \Delta
\phi_{n-1}( \e z, s)
 +f'(  m_0 ) \left [ h_n(z,s )  +   \phi_n ( \e z, s)   \right ] \\ &
  +
  f_n(m_0,m_1,m_2,..,m_{n-1})(\e z,s)   - G_{2,n-1} (\e z,s),  \qquad  \quad
\xi \in
  \NN (\e_0),
 \end{split}
  \end{equation}

 \begin{equation}
\label{2.ce8} \mu_{n-1} (\xi)  = -\e \Delta \phi_{n-1}( \xi )  +
f'(\pm 1) \phi_n( \xi) +f_n(\pm
1,\phi_1,\phi_2,..,\phi_{n-1})(\xi)    - G_{2,n-1} (\xi),
       \end{equation}
for $\xi \in \Om \setminus \NN (\e_0)$. \vskip0.5cm \noindent
\noindent {\bf Remainder term:} The remainder  $\tilde R_2(\xi,
t, \e)\equiv \tilde R_2$  is the following:
\begin{equation}
\label{v.10}
 \e \tilde R_2  =  \e^{N+1}G_{2,N} +  \e^{N+1}  A
+   \e^{N+2}\Delta \phi_N
   + E_1
   +  \e^{N+1}
B_{N+1}.
 \end{equation}
From \eqref{ap.100},  \eqref{st.2} and  \eqref{st.1} we obtain
\begin{equation}
\label{tue.1}
 \sup_{(\xi,t) \in \Om \times [0,T]} |\tilde R_2(\xi, t, \e) | \le C \e^N .   \end{equation}

\begin {rem}  Since, for $i=0, \cdots, N$,   we required $\partial_{\nu} G_{2,i} = 0$ on
$ \partial \Om$ then the $ \mu_i$ constructed in Theorem \ref
{re.2} satisfy $ \partial_{\nu} \mu_i =0$ on $ \partial \Om$; to
obtain $ \partial_{\nu} m_i =0$ on $ \partial \Om$ it is enough
to have $\partial_{\nu} \phi_i =0$ on $ \partial \Om$.
\end {rem}

\subsection   {Analysis of compatibility conditions}    \label {suba2}
Our aim is to analyze  the equations obtained so far in
 the previous subsection. The strategy is to find first at each order in $\e$
 the slowly varying  part $\phi_i$ by solving
the equations for $ \xi \in \Om \setminus \NN (\e_0,
\G^{(N)}_t)$. Then,  we extend $\phi_i$
   globally in $\Om$  and  determine  the rapidly
decaying part $h_i$ by solving the equations in $ \xi \in
  \NN (\frac {\e_0} 2, \G^{(N)}_t)$. However here, in order to continue to arbitrary order,
it is convenient to modify the way we extract the compatibility
condition required for solving the equation for $h_i$. The
modification is to add and subtract to each order a term of lower
order $ \e^{i+1} \alpha_i(s, \G) \bar m'(z)$, with
$\alpha_i(\cdot, \G)\in C^\infty  (\G)$.  Adding and subtracting
terms does not change, of course, the total quantity but it
modifies  the equation we obtain at each single order.
 In the following,
for the sake of short notation, we write  $\alpha_i (s) \equiv
\alpha_i (s,\G)$.

At  each order $i \ge 1$ in $\e$ the associate function  $m_i $
splits into two parts. The first is the function $\phi_i$ defined
globally in $\Om$ and satisfying Neumann condition on the
boundary of $\Om$ while the other part is the  function $h_i$
which differs from zero only in a tubular neighborhood of $\G $,
$\NN (\e_0, \G )$, and is exponentially decaying to zero far from
$\G$.

The zero order term is different in the sense that $m_0 $ far
from the interfaces relaxes exponentially fast to $\pm 1$. We
first state the following lemma, which is taken from \cite{A-B-C}.
We shall use this lemma to determine the condition for
solvability of equations of the type \eqref{4.4}, where $ \LL$ is
the operator on $L^2(\R)$ defined by
\begin{equation}
\label{elldef}  \LL g(z) = -g''(z) + f'(\bar m (z))g(z).
\end {equation}
  The operator $\LL$ is self adjoint on $L^2(\R)$ and has
a null space spanned by $\bar m'$. Therefore, the condition for
solvability of ${\mathcal L} h_1 = g$ is
\begin{equation}
\label{patt1} \int_\R g(z){\bar m }'(z){\rm d}z = 0.
\end{equation}

\begin{lem}\label {41} [see \cite{A-B-C}]  Let $A(z,s,t) $ defined for $z\in
\R$, $s \in  \G$, $t \in [0,T]$.   Assume that there exists
$A^{\pm} (s,t)$ such that $A(z,s,t)-A^{\pm} (s,t)= {\mathcal
O}(e^{-\alpha |z|}) $ as $|z| \to \infty$ for  $s \in  \G$ and $t
\in [0,T]$.   Then for each $s \in  \G$ and $t \in [0,T]$

\begin{equation}
\label{4.4}
 \begin{split}& (\LL w) (z,s,t) =
A(z,s,t) \qquad \hbox {{\rm for}} \qquad z \in \R ,\\ &
w(0,s,t)=0,\qquad w(\cdot,s,t)\in L^\infty (\R),  \end{split}
    \end{equation}
has a solution if and only if
\begin{equation}
\label{4.5}
 \int_{\R} A(z,s,t) \bar m'(z) dz =0  \qquad  \hbox {{\rm for all}} \qquad s \in \G, \qquad t \in [0,T].   \end{equation}

In addition if the solution exists, then it is unique and
satisfies
\begin{equation}
\label{4.6} D^{\ell}_z \left [ w(z,s,t)+\frac {A^{\pm} (s,t)}
{f'(1) } \right ] = {\mathcal O}(e^{-\alpha |z|} ) \qquad \hbox
{{\rm as}} \qquad |z| \to \infty, \qquad \hbox {{\rm for}}\qquad
\ell=0,1,2.
\end{equation}

Furthermore, if $A(z,s,t)$ satisfies
\[
 D^m_s D^n_t D^{\ell}_z \left [ A(z,s,t)- A^{\pm} (s,t)
\right] = {\mathcal O}(e^{-\alpha |z|} ),
\]
then
\[
 D^m_s D^n_t D^{\ell}_z \left [ w(z,s,t)+ \frac {A^{\pm}
(s,t)} {f'(1) } \right] = {\mathcal O}(e^{-\alpha |z|}),
\]
for all $m=0,1,\cdots,M$, $n=0,1,\cdots,N$, and
$\ell=0,1,\cdots,L+2$. Further, since $\LL$ is a preserving
parity operator,  the solution $w(z,s,t)$ is  odd (even)  with
respect to $z$ if $A(z,s,t) $ is  odd
 (even) with respect to $z$ for $s \in \G$ and $ t\in [0,T]$.  \end{lem}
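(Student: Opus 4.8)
The plan is to solve \eqref{4.4}, for each fixed $(s,t)$, as a linear second-order ODE in the variable $z$ by reduction of order, exploiting that the homogeneous equation $\LL u=0$ already possesses the explicit bounded solution $u_1:=\bar m'$: differentiating the profile equation $-\bar m''+f(\bar m)=0$ gives $\LL\bar m'=0$, and $u_1$ is even, strictly positive, and exponentially decaying at $\pm\infty$. First I would construct a second, linearly independent homogeneous solution through the Wronskian recipe
\[
u_2(z):=u_1(z)\int_0^z\frac{d\tau}{u_1(\tau)^2},
\]
so that $u_2(0)=0$, $u_1u_2'-u_1'u_2\equiv1$, and $u_2$ grows exponentially as $z\to\pm\infty$. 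Variation of parameters then writes the general solution of $\LL w=A$ as $w=u_1\Phi_1+u_2\Phi_2$ with
\[
\Phi_1(z)=c_1+\int_0^z u_2(\tau)A(\tau,s,t)\,d\tau,\qquad
\Phi_2(z)=c_2-\int_0^z u_1(\tau)A(\tau,s,t)\,d\tau,
\]
where the constants $c_1,c_2$ may depend on $(s,t)$; the assumption $A-A^\pm={\mathcal O}(e^{-\alpha|z|})$ makes $u_1A$ integrable on $\R$, so the integral in $\Phi_2$ converges at $\pm\infty$, while one checks that $u_1\Phi_1$ remains bounded at $\pm\infty$ for every $c_1$.

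Next I would read off the solvability criterion. Since $u_2$ grows exponentially at both ends, boundedness of $w$ at $+\infty$ forces $\lim_{z\to+\infty}\Phi_2(z)=0$, i.e. $c_2=\int_0^{+\infty}u_1A\,d\tau$; boundedness at $-\infty$ forces $c_2=-\int_{-\infty}^{0}u_1A\,d\tau$. These two requirements are compatible precisely when $\int_\R u_1(\tau)A(\tau,s,t)\,d\tau=\int_\R\bar m'(z)A(z,s,t)\,dz=0$, which is \eqref{4.5} (equivalently the solvability condition \eqref{patt1}). Conversely, when \eqref{4.5} holds I would fix $c_2$ to be this common value; then $\Phi_2(z)=\int_z^{+\infty}u_1A\,d\tau$ decays exponentially as $z\to+\infty$ (and symmetrically as $z\to-\infty$), so $u_2\Phi_2$, and hence $w$, is bounded on $\R$. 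The normalization $w(0,s,t)=0$ then reads $u_1(0)\,c_1=0$ (because $u_2(0)=0$), forcing $c_1=0$, so the solution is unique. Uniqueness in general is immediate: any two bounded solutions differ by an element of $\ker\LL\cap L^\infty(\R)=\R\,\bar m'$, and vanishing at $z=0$ kills it.

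For the refined behaviour \eqref{4.6}, near $+\infty$ I would compare $w$ with the constant $A^+/f'(1)$, which is the bounded solution of the limiting constant-coefficient equation $-u''+f'(1)u=A^+$: the difference $v:=w-A^+/f'(1)$ satisfies $\LL v=A-f'(\bar m)\,A^+/f'(1)=(A-A^+)+\bigl(f'(1)-f'(\bar m(z))\bigr)A^+/f'(1)$, whose right-hand side is ${\mathcal O}(e^{-\alpha|z|})$ because $A-A^+$ decays by hypothesis and $\bar m(z)\to1$ exponentially. A standard one-dimensional decay estimate — Duhamel against the exponential solutions of $-u''+f'(1)u=0$, combined with the boundedness of $v$ just obtained — yields $v={\mathcal O}(e^{-\alpha|z|})$, and differentiating the equation in $z$ propagates this to $D^\ell_z v$, $\ell=1,2$; the case $z\to-\infty$ is identical with $A^-$. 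The joint estimates involving $D^m_s D^n_t$ follow by differentiating \eqref{4.4} in $s$ and $t$ (the coefficient $f'(\bar m(z))$ being independent of $s,t$) and repeating the argument, with uniformity in $(s,t)$ coming from the assumed uniform exponential bounds on $A$ and its derivatives. Finally, the parity statement is a symmetry argument: $f'(\bar m(\cdot))$ is even, so if $w$ solves $\LL w=A$ then $z\mapsto w(-z)$ solves $\LL(\,\cdot\,)=A(-\,\cdot\,,s,t)$; hence, when $A(\cdot,s,t)$ is odd (resp.\ even), $-w(-\,\cdot\,)$ (resp.\ $w(-\,\cdot\,)$) is again a bounded solution vanishing at $0$, so it coincides with $w$ by uniqueness and $w$ inherits the parity of $A$.

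The genuine content is thus a routine application of reduction of order together with the solvability condition \eqref{patt1}; the step deserving the most care is the uniform-in-$(s,t)$ exponential decay of all derivatives asserted in \eqref{4.6}, which is nonetheless classical. Since the statement is quoted from \cite{A-B-C}, one may also simply invoke it there.
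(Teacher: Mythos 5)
Your proof is correct. The paper itself gives no argument for this lemma --- it is quoted verbatim from \cite{A-B-C} --- and your reduction-of-order/variation-of-parameters derivation (second homogeneous solution from the Wronskian, solvability condition $\int_\R A\,\bar m'\,dz=0$ forced by matching the two boundedness constraints at $\pm\infty$, uniqueness from $\ker\LL\cap L^\infty(\R)=\R\,\bar m'$ plus $w(0)=0$, decay by comparison with $A^\pm/f'(1)$, parity by uniqueness) is exactly the standard argument found in that reference, so nothing further is needed.
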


\begin{rem}  Note that if $A(\cdot,\cdot,\cdot)\in C^\infty \left (
\R \times \G \times [0,T] \right ) $ then  the solution
 $w(\cdot,\cdot,\cdot)$ of the problem \eqref{4.4} is a function
 in $  C^\infty \left ( \R \times \G \times [0,T] \right )$.
This would be always the case whenever we  apply Lemma \ref{41}.
\end{rem}

\noindent  The compatibility conditions must hold for every $\G$
in $\MM$, and so, in our derivation we refer to $\G:=\G_t$.

\subsubsection{Zero order term in $ \e$:} For $\xi
\in  \NN (\frac {\e_0}2) $ using \eqref{2.221} we obtain
\begin{equation}
\label{2.aa1} 0 = -  \bar m''(z) + f( \bar m(z)) \qquad \hbox {\rm
for} \qquad  z \in  \Big{[}-\frac {\e_0} {2\e},\frac {\e_0} {2\e
}\Big{]},
\end{equation}
while \eqref{M.1} yields
 \begin{equation}
\label{2.b1}
 0 =   f ( \pm 1)
\quad \hbox { for }\quad \xi \in \Om \setminus \NN (\e_0).
\end{equation}   The above relations determine $m_0$, more specifically $m_0(z) =
\bar m(z)$,  where $\bar m$ solves the equation
$$ -   m''(z) + f(m(z)) =0, \;\;\;\;z \in \R.$$

\subsubsection{First  order term   in $ \e $ and determination of $V_0$ via the Fredholm alternative}
As explained at the beginning of this section, it is convenient
for solving  \eqref{2.20} to add a  term $ \e \alpha_1 (s,\G)
\bar m' (z)$, $s \in \G$ and $ z \in \R$,  with $\alpha_1
(\cdot,\G) $ to be determined. This term  will be subtracted to
the second  order.  Recalling the definition of $\LL$, see
\eqref{elldef}, and adding $\e \alpha_1(s) \bar m' (z)$, we  write
\eqref{2.20} as
\begin{equation}
\label{2.a2}
  \mu_{0}(\e z,s)- f'(\bar m(z))
    \phi_1( \e z, s)-  K(s)\bar m'(z)  +\e \alpha_1 (s) \bar m' (z) +G_{2,0} ( \e z, s) =
 (\LL h_1)(z,s),      \end{equation}
  for $  s
\in \G$,  $|z|\le \frac {\e_0} {2 \e}$. By \eqref{2.200} we obtain
\begin{equation}
\label{4.1}
 \phi_1(\xi)=   \frac {  \mu_{0}( \xi) + G_{2,0} ( \xi)} { f'( \bar m ( \pm \infty ))}, \qquad   \xi \in
\Om \setminus \NN (\e_0).
 \end{equation}

 We extend this definition of
$\phi_1$  globally in $\Omega$. We then insert   \eqref{4.1} into
\eqref{2.a2} obtaining for $  s \in \G$,  $|z|\le \frac {\e_0} {2
\e}$
\begin{equation}
\label{2.aa2}
   \mu_{0}(\e z,s) \left [1   -   \frac  {f'(\bar m(z))}
  {f'(\pm1)}  \right ] +
\left(1 - \frac {f'( \bar m(z))} { f'(\pm1)}\right) G_{2,0}(\e
z,s)     - K(s) \bar m'(z) +\e \alpha_1 (s) \bar m' (z)    = (\LL
h_1)(z, s).
      \end{equation}
Since the left hand side of \eqref{2.aa2} tends exponentially to
$0$ as $z \to \pm \infty$, if the solution of \eqref{2.aa2}
exists (cf.  Lemma \ref{41}), then it decays exponentially  fast
to $0 $. We can, therefore, extend \eqref{2.aa2} for any $ z$ in
$\R$.

 We prove the next result.

\begin{lem}\label {42}  Set

\begin{equation}
\label{z.1}
     V^{(0)}_0(\xi,\G) =    {\mathcal T}_\G \left [
    S \left \{ K(\cdot) - \frac {\int_\G   K (\xi) \dsx}  {|\G|} \right \}
  + \frac 1 4   \left \{ B_{0,0,0}(\cdot) - \frac {\int_\G
B_{0,0,0}(\xi) \dsx }   {|\G|}  \right \} \right ] ( \xi ) \qquad
\xi \in \G ,
\end{equation}
 where
 \begin{equation}
\label{surf} S= \frac 14 \int_\R \left (\bar m'(z) \right)^2
dz,   \end{equation} and $B_{0,0,0}(\cdot)$ is defined in \eqref
{roma.3} while ${\mathcal T}_\G $ is the Dirichlet-Neuman
operator given by \eqref{roma4}. Then there exists a uniquely
determined $\alpha_1(\cdot,\G) \in C^\infty (\G)$ and a unique
solution $h_1(\cdot,s)$ of \eqref{2.aa2} with $s \in \G$,
   such that
 $h_1(0,s)=0$ and $h_1(\cdot,s) \in L^\infty (\R)$.
  Moreover,  $h_1(\cdot , s)  $ for $s \in \G$  and its
derivatives  with respect to $z$ decay exponentially fast to $0$
as $z$ tends to $\pm \infty$.  Further $h_1(\cdot , s)=    \tilde h_1(\cdot , s) + \e q_\e(\cdot,s)$
where $\tilde h_1(\cdot , s)$ is an even function of $z$,  $\tilde h_1(0,s)=0$ and $ \tilde h_1(\cdot,s) \in L^\infty (\R)$.
\end{lem}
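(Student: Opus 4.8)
The plan is to read \eqref{2.aa2} as the inhomogeneous problem $\LL h_1=A_1(\cdot,s;\e)$ for the one-dimensional operator $\LL$ of \eqref{elldef}, with
\[
A_1(z,s;\e)=\Big(1-\tfrac{f'(\bar m(z))}{f'(\pm1)}\Big)\big[\mu_0(\e z,s)+G_{2,0}(\e z,s)\big]-K(s)\,\bar m'(z)+\e\,\alpha_1(s)\,\bar m'(z),
\]
and to apply the Fredholm-type Lemma \ref{41}. Its hypotheses hold: $1-f'(\bar m(z))/f'(\pm1)$ and $\bar m'(z)$ are $O(e^{-\alpha|z|})$ while $\mu_0$ (smooth by Theorem \ref{re.2}) and $G_{2,0}$ (smooth by Assumption A1) are bounded with bounded derivatives, so $A_1(\cdot,s;\e)$ decays exponentially, has limits $A^\pm=0$, and is jointly $C^\infty$ in $(z,s)$.

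First I would determine $\alpha_1$ from the solvability condition. By Lemma \ref{41}, \eqref{2.aa2} admits a unique solution $h_1(\cdot,s)$ with $h_1(0,s)=0$ and $h_1(\cdot,s)\in L^\infty(\R)$ if and only if $\int_\R A_1(z,s;\e)\,\bar m'(z)\,dz=0$ for all $s\in\G$. With $\int_\R(\bar m'(z))^2\,dz=4S$ (from \eqref{surf}), this is one linear equation for $\alpha_1(s)$,
\[
4S\,\e\,\alpha_1(s)=4S\,K(s)-\int_\R\Big(1-\tfrac{f'(\bar m(z))}{f'(\pm1)}\Big)\big[\mu_0(\e z,s)+G_{2,0}(\e z,s)\big]\bar m'(z)\,dz,
\]
which uniquely determines $\alpha_1$; $\alpha_1\in C^\infty(\G)$ since the right side is smooth in $s$ and the $z$-integral converges absolutely with all $s$-derivatives because of the exponential weight. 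Moreover $\int_\R\big(1-f'(\bar m(z))/f'(\pm1)\big)\bar m'(z)\,dz=2$ (integrate $\bar m'-\bar m'''/f'(\pm1)$, using $\bar m''(\pm\infty)=0$), and the odd-in-$z$ part of $\mu_0(\e z,s)=\mu_0(0,s)+\e z\,\partial_d\mu_0(0,s)+O(\e^2z^2)$ integrates to zero against the even weight $(1-f'(\bar m(z))/f'(\pm1))\bar m'(z)$, so the right side above equals $4SK(s)-2\big(\mu_0(0,s)+G_{2,0}(0,s)\big)+O(\e^2)$; the choice \eqref{z.1} is exactly the one for which, by the potential-theoretic representation of $\mu_0$ in Theorem \ref{re.2} and \eqref{ms3}, the trace of $\mu_0$ on $\G$ converges modulo a constant (absorbed in $c_0(t)$) to $2SK-G_{2,0}$, making this leading term vanish and keeping $\alpha_1$ bounded as $\e\to0$.

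With $\alpha_1$ so fixed the solvability condition holds, and Lemma \ref{41} produces the unique $h_1(\cdot,s)$ with $h_1(0,s)=0$, $h_1(\cdot,s)\in L^\infty(\R)$; since $A^\pm=0$, the bound \eqref{4.6} and the regularity part of Lemma \ref{41} give $D_s^mD_z^\ell h_1(z,s)=O(e^{-\alpha|z|})$ for all $m,\ell$, so $h_1$ and all its $z$-derivatives decay exponentially and $h_1\in C^\infty(\R\times\G)$. For the splitting, set $\mu^\ast(s):=\lim_{\e\to0}\mu_0(0,s)$ and
\[
\bar A_1(z,s):=\Big(1-\tfrac{f'(\bar m(z))}{f'(\pm1)}\Big)\big[\mu^\ast(s)+G_{2,0}(0,s)\big]-K(s)\,\bar m'(z),
\]
the $\e\to0$ limit of $A_1-\e\alpha_1\bar m'$. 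It is even in $z$ (both $f'(\bar m(\cdot))$ and $\bar m'$ are even) and, by the choice \eqref{z.1}, orthogonal to $\bar m'$, so Lemma \ref{41} gives an even solution $\tilde h_1(\cdot,s)$ with $\tilde h_1(0,s)=0$, $\tilde h_1(\cdot,s)\in L^\infty(\R)$. Then $\LL(h_1-\tilde h_1)=A_1-\bar A_1$, which is orthogonal to $\bar m'$ and $O(\e)$ in $L^\infty(\R)$ uniformly in $s$ (Taylor-expand $\mu_0(\e z,s)$, $G_{2,0}(\e z,s)$ around $d=0$, use the $O(\e)$ rate of the trace convergence, and $|z|e^{-\alpha|z|}\le C$); a final application of Lemma \ref{41} yields $h_1-\tilde h_1=\e\,q_\e$ with $q_\e(\cdot,s)$ bounded uniformly in $\e$.

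The main obstacle is the step underlying \eqref{z.1}: one must show, via the single-layer representations of Theorem \ref{re.2} and the weak convergence $\tfrac1\e m_0'(\cdot/\e)\rightharpoonup 2\,\delta_\G$, that the $\e\to0$ limit of the trace of $\mu_0$ on $\G$ is the trace of the solution of the Mullins--Sekerka-type problem \eqref{nov.3}, and recognize the map sending Dirichlet data $g$ on $\G$ to the jump $[\partial_\nu u]_\G$ of the function harmonic off $\G$ with $u|_\G=g$ and $\partial_\nu u=0$ on $\partial\Om$ as the operator $\TT_\G$ of \eqref{z.1}; the mean-value subtractions $\int_\G K\,\dsx/|\G|$ and $\int_\G B_{0,0,0}\,\dsx/|\G|$ in \eqref{z.1} encode that the trace is fixed only up to the constant $c_0(t)$ of \eqref{j1} and that $\TT_\G$ annihilates constants. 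A secondary point, needed only for the last splitting, is the quantitative $O(\e)$ bound for the difference between $\mu_0$ near $\G$ and its trace.
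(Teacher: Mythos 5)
Your proposal follows essentially the same route as the paper: the Fredholm alternative of Lemma \ref{41} for $\LL$, the choice of $V^{(0)}_0$ via the Dirichlet--Neumann operator so that the leading-order solvability condition holds, $\alpha_1$ absorbing the $O(\e)$ discrepancy, and the splitting $h_1=\tilde h_1+\e q_\e$ obtained by comparing the full equation with its trace-frozen limit (the paper merely reverses the order, constructing $\tilde h_1$ first from the frozen equation \eqref{2.d2} and then $q_\e$ from the difference equation \eqref{2.c2}, whereas you construct $h_1$ first). The only slip is immaterial: since $\partial_d\mu_0$ jumps across $\G$, the first Taylor term of $\mu_0(\e z,s)$ is not odd in $z$, so the remainder in your solvability identity is $O(\e)$ rather than $O(\e^2)$ --- which still leaves $\alpha_1$ bounded, smooth and uniquely determined.
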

\begin{proof}
We start  determining $\tilde h_1$ as solution of 
\begin{equation}
\label{2.d2}
   \mu_{0}(0,s) \left [1   -   \frac  {f'(\bar m(z))}
  {f'(\pm1)}  \right ] +
\left(1 - \frac {f'( \bar m(z))} { f'(\pm1)}\right) G_{2,0}(0,s)     - K(s) \bar m'(z) +\e \tilde \alpha_1 (s) \bar m' (z)    = (\LL
\tilde h_1)(z, s).
      \end{equation}
     Equation  \eqref {2.d2} differs from \eqref {2.aa2}  only for terms of order $\e$.
     Namely  $|  G_{2,0}(0,s) -  G_{2,0}(\e z,s)| \le
C \e |z|$   and $ |\mu_{0}(0,s)-  \mu_{0}(\e z ,s)| \le
C \e |z|$.
 For any fixed $ s  \in \G $, the condition for the
existence of $\tilde h_1$ requires that
\begin{equation}
\label{4.7d}
  \begin{split} & \int_\R \mu_{0}(0,s) \left [1   -   \frac  {f'(\bar m(z))}
  {f'(\pm 1)}  \right ]   \bar m'(z) dz      +  \int_\R G_{2,0}(0,s) \left [1   -   \frac  {f'(\bar m(z))}
  {f'(\pm 1)}  \right ]   \bar m'(z) dz
\\ &  =  \left [ K(s) -\e\tilde  \alpha_1 (s) \right ]  \int_\R
\left (\bar m'(z) \right)^2 dz\;\;\;\;\hbox {for}\;\;\;\; s  \in
\G.
\end{split}  \end{equation}
Taking into account that    $
 \int_\R f'(\bar m(z))\bar m'(z) dz = f(1)- f(-1)= 0  $  and $ \int \bar m'(z) dz =2$ formula  \eqref {4.7d} can be written as 
 \begin{equation}
  2 [ \mu_{0}(0,s)       +   G_{2,0}(0,s)]  =
    \left [ K(s) -\e\tilde  \alpha_1 (s) \right ]  \int_\R
\left (\bar m'(z) \right)^2 dz\;\;\;\;\hbox {for}\;\;\;\; s  \in
\G.
  \end{equation}

  Recalling \eqref {peg1as1} we obtain

\begin{equation}
\label{4.7a}
  \begin{split} & 2  \mu_{0,0}(0,s)     = -   2 \tilde \mu_{0}(0,s)  -  2 G_{2,0}(0,s) 
 \cr &  
+   \left [ K(s) -\e \tilde \alpha_1 (s) \right ]  \int_\R \left (\bar
m'(z) \right)^2 dz\;\;\;\;\hbox{for}\;\;\;\; s  \in \G.
\end{split}  \end{equation}

 Further, we set
\begin{equation}
\label{roma.1} \mu_{0,0,0}( \xi) = 2\int_{\G } V^{(0)}_0( \eta )
G( \xi,
 \eta )  \dse    + c_0(t)  \qquad \xi \in \Om , \end{equation}

 \begin{equation}\label{roma.22} \tilde \mu_{0,0,0}(\xi) = 2  \langle V_0\rangle_{\eightpoint
\Gamma} \int_{\G }  G( \xi,
 \eta )  \dse   -  \int_\Om G(\xi,\eta)G_{1,0} (\eta)  {\rm d}
 \eta
  \qquad \xi \in \Om, \end{equation}
  and
 \begin{equation} \label{roma.3}  B_{0,0,0} (s)     =
-2 \left [ \tilde\mu_{0,0,0} (0,s) +  G_{2,0}(0,s) \right ].
   \end{equation}
It is immediate to see that $$ \mu_{0,0}(  \xi) -\mu_{0,0,0}( \xi)
\simeq  \e,$$  $$\tilde \mu_{0}(\xi)-  \tilde \mu_{0,0,0}(\xi)
  \simeq \e .$$          We   first choose $V^{(0)}_0$
 imposing   for $
s  \in \G$, cfr \eqref {4.7a}, the next identity
  \begin{equation}  
\label{4.8}
 2 \mu_{0,0,0}( \xi)     =   B_{0,0,0} (\xi)+  K(\xi )   \int_\R
\left (\bar m'(z) \right)^2 dz \quad \xi \in \G.
   \end{equation}
 Inserting \eqref{roma.1}   in \eqref{4.8}
  and   integrating over $\G$  we obtain  that
 \begin{equation}
\label{4.8E}
 4\int_{\G} \dsx \int_{\G } V^{(0)}_0(\eta) G( \xi,
 \eta)  \dse  +  2c_0 (t) |\G|    =   \int_\G   K(\xi) \dsx   \int_\R
\left (\bar m'(z) \right)^2 dz  +      \int_{\G}  B_{0,0,0}
(\xi)\dsx ,
     \end{equation}
 and therefore,
\begin{equation}
\label{4.8G}
  c_0 (t)=  \frac 1  {2|\G|} \left [    \int_\G   K(\xi) \dsx  \int_\R
\left (\bar m'(z) \right)^2 dz       +  \int_{\G}
B_{0,0,0}(\xi)\dsx - 4\int_{\G }
 \dsx  \int_{\G } V^{(0)}_0( \eta ) G(\xi,
 \eta) \dse \right ] .  \end{equation}
 We  note that  $c_0(t)$   is written in terms  of
the velocity field  $V^{(0)}_0$ which still needs to be
determined.   We insert $c_0(t)$,  as in \eqref{4.8G}, into
\eqref{4.8} to obtain the   equation determining $V^{(0)}_0$
$$
{\mathcal S}_{\G} V^{(0)}_0 (\eta,\G) = S \left [
    K(\eta)  - \frac { \int_\G K (\xi) \dsx }   {|\G|} \right]
    +    \frac 14  \left [  B_{0,0,0}(\xi)-   \frac { \int_{\G}  B_{0,0,0} (\xi)\dsx} {|\G|} \right ]\;\;\;\;\eta \in
    \G.$$
   Here, ${\mathcal S}_{\G}$ is   the linear operator  defined in
\eqref{sform} and $S$ the quantity defined in \eqref {surf}.
Applying the Dirichlet-Neumann operator, see \eqref{ma.3},  we
arrive at \eqref {z.1}.
 The above determines first $V^{(0)}_0$ and then $c_0(t) $, see \eqref{4.8G}.
  Now since $V^{(0)}_0$   and  $c_0(t) $ are  chosen, we may then
simply choose $ \tilde \alpha_1(s)$  so that \eqref{4.7d} is satisfied.
Then   for any $ s \in \G$,  the existence of a unique $\tilde h_1$
satisfying $\tilde h_1(0,s)=0$ is assured; $\tilde h_1$ is exponentially
decaying to zero as $|z| \to \infty$. Since the left-hand side of
\eqref{2.d2} is even, then the solution $\tilde h_1 (\cdot,s) $ is even
as a function of $z$.

Define $   \e q_\e(\cdot,s)= h_1 (\cdot,s) -\tilde h_1 (\cdot,s)$ and subtract  \eqref {2.d2} to \eqref {2.aa2}. 
We have that   $ q_\e(\cdot,s)$    satisfies 
  \begin{equation} 
\label{2.c2} \begin {split} & \frac 1 \e 
 \left [1   -   \frac  {f'(\bar m(z))}
  {f'(\pm1)}  \right ]  [  \mu_{0}(\e z,s)-  \mu_{0}(0,s)   +
   G_{2,0}(\e z,s)- G_{2,0}(0,s)]      + [ \alpha_1 (s) - \tilde \alpha_1 (s)] \bar m' (z)  \cr &   =   (\LL
  q_\e)(z, s), \end {split}
      \end{equation}
       where $ \alpha_1 (\cdot)$ must still be determined. 
    We determine   $ \alpha_1 (\cdot)$  so that  the  following solvability condition for  $  q_\e(\cdot,s)$ holds:
  \begin{equation} 
  \begin {split} &  \frac 1 \e \int    dz \bar m' (z)   \left [1   -   \frac  {f'(\bar m(z))}
  {f'(\pm1)}  \right ]  [  \mu_{0}(\e z,s)-  \mu_{0}(0,s)   +
   G_{2,0}(\e z,s)- G_{2,0}(0,s)]   \cr &   +  [ \alpha_1 (s) - \tilde \alpha_1 (s)] \int dz  (\bar m' (z))^2 =0. 
  \end {split}    \end{equation} 
By Lemma \ref {41} we    have  that  for all $s \in \G$,    $  q_\e(0,s)=0$, $  q_\e(0,s) \in L^\infty(\R)$.
\end{proof}
\begin {rem} \label {nov1}  Let us denote by
$$ \mu_{0,0,0}^F =  \mu_{0,0,0}+ \tilde  \mu_{0,0,0}, $$
the quantities defined in \eqref {roma.1} and \eqref {roma.22}.
Taking into account  \eqref {2.1000} and  the definition of
$\langle V_0\rangle_{\eightpoint \Gamma} = \frac 1 {2 |\G_t|}
\int_\Om  G_{1,0} (\eta)  {\rm d}\eta $,   see \eqref {zcond1},
we  have that  $\mu_{0,0,0}^F$  is the solution of 
\begin {equation} \label{mv1} \Delta \mu (\xi) = - G_{1,0} (\xi) \qquad{\rm for}\qquad \xi
\in \Omega \setminus \G,
\end{equation}
\begin{equation}
\label{mv2} \mu (\xi) =  2 S K(\xi)  -G_{2,0} (\xi) \
\;\;\;\; {\rm on}\;\;\;\;\G,\;\;\;\;
 \partial_ \nu \mu = 0\;\;\;\;{\rm
 on}\;\;\;\;
\partial\Omega.
\end{equation}
One deduces \eqref {mv2} by   \eqref {4.8}, taking into account the definition of $S$
and      \eqref {roma.3}.

\end {rem}
\vskip0.5cm
 \subsubsection{Second  order term   in $ \e $ }
\vskip0.5cm \noindent
  From \eqref {2.c8} we have that \begin{equation}
\label{4.10} \phi_2(\xi)= \frac 1 {f'(\pm 1) } \left[
 \mu_1( \xi ) -
f_2(\bar m ( \pm \infty),\phi_1  (\xi)) +\e \Delta \phi_1 +
G_{2,1} (\xi)   \right ], \qquad   \xi \in \Om \setminus \NN
(\e_0).   \end{equation}
 As was done before, we  extend  the validity of \eqref{4.10}  globally in $\Om$.
We insert  \eqref{4.10} into \eqref{2.m8}. Further, we add and
subtract to the next order $\e \alpha_2(s) \bar m'(z)$ to obtain
\begin{equation}
\label{4.11} \mu_1 (\e z,s) \left [1   - \frac{ f'(\bar m (z))}
{f'(\pm 1) } \right] - A_2(z,s) + \e \alpha_2(s) \bar m'(z)
 =
 (\LL h_2)(z, s),
   \end{equation}
 where we set
 \begin{equation}
\label{ap.5}
  \begin{split}  A_2(z,s)  = & \left [ G_{2,1} (\e z,s)+ \e \Delta  \phi_1(\e z,s) - f_2(\pm 1,\phi_1) (\e z,s) \right ]  \left [1-   \frac {f'(\bar m (z))}{
f'(\pm 1)}  \right ]
 \\ &
- K^2(s) z \bar m'(z)-K (s)  h'_1(z,s) +\alpha_1(s) \bar m'(z) .
\end{split}       \end{equation}
 All the quantities in  \eqref{ap.5} have been already determined.
Furthermore,
\begin{equation}
\label{exp}
 \lim_{|z| \to \infty} A_2(z,s) = 0 \qquad  s \in \G,   \end{equation}
the convergence being exponentially fast.
   As done
before, we extend \eqref{4.11} in $\R$.
\begin{lem}\label {43}
Set
\begin{equation}
\label{z.1A}
     V^{(0)}_1(\xi,\G) =   {\mathcal T}_{\G} \left [  \frac 14 B_1( \cdot )   -  \frac 1{4|\G|} \int_{\G}
B_1( s )  {\rm {d} } s  \right ] (\xi)
  \qquad \xi \in \G,   \end{equation}
where $ B_1(s ) $ is defined in \eqref{o.5} and ${\mathcal T}_\G$
is the Dirichlet--Neumann operator. Then there exist uniquely
determined $\alpha_2(\cdot,\G) \in C^{\infty} (\G)$ and
$h_2(\cdot,s) \in \L^{\infty} (\R)$ with $h_2(0,s)=0$ for $ s \in
\G $, solutions  of \eqref{4.11}.
 Moreover $h_2 (\cdot,s)$  and its
derivatives with respect to $z$
  decay   exponentially  to  $0 $,  as $z$ tends to $\pm
\infty$.   \end{lem}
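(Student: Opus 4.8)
The plan is to repeat, one order higher, the argument already carried out for Lemma~\ref{42}. Since $\LL$ in \eqref{elldef} is self-adjoint on $L^2(\R)$ with kernel spanned by $\bar m'$, Lemma~\ref{41} tells us that \eqref{4.11} has a unique solution $h_2(\cdot,s)\in L^\infty(\R)$ with $h_2(0,s)=0$, decaying exponentially together with its $z$-derivatives as $|z|\to\infty$, if and only if its right-hand side is $L^2(\R)$-orthogonal to $\bar m'$, that is
\[
\int_\R\Big\{\mu_1(\e z,s)\Big[1-\frac{f'(\bar m(z))}{f'(\pm1)}\Big]-A_2(z,s)+\e\,\alpha_2(s)\,\bar m'(z)\Big\}\,\bar m'(z)\,dz=0\qquad(s\in\G),
\]
with $A_2$ given by \eqref{ap.5}. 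First I would simplify this condition using $\int_\R f'(\bar m(z))\bar m'(z)\,dz=f(1)-f(-1)=0$, $\int_\R\bar m'(z)\,dz=2$, $\int_\R(\bar m'(z))^2\,dz=4S$ (cf. \eqref{surf}), $\int_\R z(\bar m'(z))^2\,dz=0$ by parity, and the fact proved in Lemma~\ref{42} that $h_1=\tilde h_1+\e q_\e$ with $\tilde h_1(\cdot,s)$ even in $z$, so that $\int_\R h_1'(z,s)\bar m'(z)\,dz={\mathcal O}(\e)$. Since every slowly varying factor entering $\mu_1$ and $A_2$ (namely $\mu_1$, $G_{2,1}$, $\phi_1$, $\Delta\phi_1$, $f_2(\pm1,\phi_1)$) depends on $z$ only through $\e z$ while the prefactor $1-f'(\bar m(z))/f'(\pm1)$ decays exponentially, freezing these factors at $z=0$ costs only ${\mathcal O}(\e)$, and the solvability condition collapses to $2\mu_1(0,s)=B_1(s)-4S\,\e\,\alpha_2(s)+{\mathcal O}(\e)$ on $\G$, where $B_1$, the quantity of \eqref{o.5}, gathers the already constructed objects $\tilde\mu_1(0,\cdot)$, $G_{2,1}(0,\cdot)$, $f_2(\pm1,\phi_1)(0,\cdot)$ and $\alpha_1(\cdot)$ (more precisely their $\e\to0$ limits on $\G$). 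Note that, in contrast with the first order, no genuine curvature term survives here: $-K^2(s)z\bar m'$ integrates to zero by parity and $-K(s)h_1'$ is ${\mathcal O}(\e)$, while the role that $K$ played at first order is now played by the already known function $\alpha_1$, which is absorbed into $B_1$.

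Next I would pin down $V^{(0)}_1$, mirroring \eqref{4.8}--\eqref{z.1}. Writing $\mu_1=\mu_{1,0}+\tilde\mu_1$ as in Lemma~\ref{231}, with $\tilde\mu_1$ independent of $V^{(0)}_1$ and $\mu_{1,0}$ given by \eqref{rep2}, the local-coordinate computation of \eqref{rm.5} yields the single-layer representation $\mu_{1,0,0}(\xi)=2\int_\G V^{(0)}_1(\eta)G(\xi,\eta)\,\dse+c_1(t)$, $\xi\in\G$, for the $\e\to0$ limit of $\mu_{1,0}$ on $\G$. Imposing the leading identity $2\mu_{1,0,0}(\xi)=B_1(\xi)$ on $\G$, integrating it over $\G$ to solve for the constant $c_1(t)$ in terms of $V^{(0)}_1$, and substituting $c_1(t)$ back, I obtain ${\mathcal S}_\G V^{(0)}_1=\frac14 B_1-\frac1{4|\G|}\int_\G B_1$ with ${\mathcal S}_\G$ as in \eqref{sform}; applying the Dirichlet--Neumann operator ${\mathcal T}_\G$ of \eqref{roma4} yields exactly \eqref{z.1A}, thereby determining $V^{(0)}_1$, hence $c_1(t)$ and $\mu_1$. (As in the remark after \eqref{2.ce8}, the function $\phi_2$ defined by \eqref{4.10} then inherits the Neumann condition on $\partial\Om$.)

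With $V^{(0)}_1$, $c_1(t)$ and $\mu_1$ fixed, I would finish by solving \eqref{4.11} for $\alpha_2$ and $h_2$ by the same two-step device as in Lemma~\ref{42}: first solve the equation obtained by freezing the slow coefficients at $z=0$ and replacing $h_1$ by $\tilde h_1$, choosing $\tilde\alpha_2(\cdot)\in C^\infty(\G)$ so that its Fredholm condition \eqref{4.5} holds and producing $\tilde h_2$ by Lemma~\ref{41}; then set $q_\e:=\e^{-1}(h_2-\tilde h_2)$, subtract the two equations, observe that the resulting data for $q_\e$ is ${\mathcal O}(1)$ uniformly in $\e$ (the $\e^{-1}$ on the increments $\mu_1(\e z,s)-\mu_1(0,s)$, $G_{2,0}(\e z,s)-G_{2,0}(0,s)$, etc. being absorbed by a factor $z$ against the exponentially decaying weight), and choose $\alpha_2(\cdot)\in C^\infty(\G)$ so that the Fredholm condition for $q_\e$ holds, again producing $q_\e$ by Lemma~\ref{41}. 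Then $h_2=\tilde h_2+\e q_\e$ vanishes at $z=0$, lies in $L^\infty(\R)$, is $C^\infty(\R\times\G)$ because its data is smooth, and, the data being exponentially close to its limits at $\pm\infty$, decays exponentially together with its $z$-derivatives by the last part of Lemma~\ref{41}; moreover $\alpha_2$ is uniquely determined.

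I expect the only real difficulty to be the bookkeeping: keeping track of which contributions to the solvability condition are ${\mathcal O}(\e)$ and hence belong to the third-order equation, verifying that $B_1$ in \eqref{o.5} depends solely on quantities already constructed at lower order (so that ${\mathcal S}_\G V^{(0)}_1=\cdots$ is a closed, ${\mathcal T}_\G$-invertible equation for $V^{(0)}_1$, exactly as in Lemma~\ref{42}), and checking that the successive choices of $\tilde\alpha_2$ and then $\alpha_2$ remain available after $V^{(0)}_1$ and $c_1(t)$ have been used up to enforce the leading identity. No analytic input beyond Lemmas~\ref{41}, \ref{42} and \ref{231} should be required.
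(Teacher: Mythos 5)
Your proposal is correct and follows essentially the same route as the paper: Fredholm solvability for ${\mathcal L}$ via Lemma~\ref{41}, separation of the already-determined $\tilde\mu_1$ contribution into $B_1$ as in \eqref{o.5}, the leading identity $\mu_{1,0,0}(0,\cdot)=\tfrac12 B_1$ which first yields $c_1(t)$ by integration over $\G$ and then ${\mathcal S}_{\G}V^{(0)}_1=\tfrac14 B_1-\tfrac1{4|\G|}\int_\G B_1$, inverted by ${\mathcal T}_\G$ to give \eqref{z.1A}. The only cosmetic differences are your parity remarks on which pieces of $A_2$ actually contribute to $B_1$ and your reuse of the two-step ``freeze at $z=0$'' device from Lemma~\ref{42}, whereas the paper simply fixes $\alpha_2$ directly from \eqref{o.4}; both yield the same uniquely determined $\alpha_2$ and $h_2$.
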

\begin{proof}
The solvability condition, see \eqref{4.5}, is satisfied
provided  that for $s  \in  \G$ and $t \in [0,T]$ the next
relation holds true
\begin{equation}
\label{4.12}
 \begin{split} \int_\R  \mu_1 (\e z,s ) \left [1   -   \frac  {f'(\bar m(z))}
  {f'(1)}  \right ]   \bar m'(z) dz  =  \int_\R A_2(z,s)
\bar m'(z) dz  - \e \alpha_2(s) \int_\R \left (\bar m'(z)
\right)^2 dz,    \end{split}
\end{equation}
  where  $\mu_1 $ is defined in \eqref{div}. The term $\tilde \mu_{1} $ of
$\mu_{1} $ has been  already completely determined. As in the
previous case, still to be determined are the constant $c_1(t)$,
the velocity $V^{(0)}_1$ and $\alpha_2(s)$. First, we write
\eqref{4.12} as
\begin{equation}
\label{o.4}
 \int_\R\mu_{1,0}(\e z,s ) \left [1   -   \frac  {f'(\bar m(z))}
  {f'(1)}  \right ]   \bar m'(z) dz =  B_1(s) -\e \alpha_2(s) \int_\R
\left (\bar m'(z) \right)^2 dz,
\end{equation}
where
\begin{equation}
\label{o.5}
   B_1( s)   =
  \int_\R \left \{   A_2(z,s) -   \tilde \mu_{1}(\e z,s  )  \left [1   -   \frac  {f'(\bar m(z))}
  {f'(1)}  \right ] \right \}   \bar m'(z) dz.    \end{equation}
Let now
\begin{equation}
\label{roma.2} \mu_{1,0,0}( \xi): = 2\int_{\G } V^{(0)}_1( \eta  )
G( \xi, \eta )  \dse +c_1(t) \qquad   \xi \in \Omega.
\end{equation}
Since $\mu_{1,0,0}(\xi) - \mu_{1,0}( \xi) \simeq \e $, we choose
$V_1$ by imposing
\begin{equation}
\label{o.4b}
  \int_\R\mu_{1,0,0}(0, s) \left [1   -   \frac  {f'(\bar m(z))}
  {f'(1)}  \right ]   \bar m'(z) dz =  B_1( s),
\end{equation}
and obtain
\begin{equation}
\label{o.4c}
   \mu_{1,0,0}(0,s)= \frac 12 B_1( s)  \qquad s \in \G.
\end{equation}
 Inserting \eqref{roma.2} in \eqref{o.4c} and integrating over $\G$ we arrive at
 \begin{equation}
\label{(o.4d)} c_1(t) = \frac 1 {|\G_t|} \left [ \frac 12
\int_{\G_t} B_1(\eta) \dse  - 2 \int_{\G_t } \dsx \int_{\G_t }
V^{(0)}_1( \eta  ) G( \xi,
 \eta)  \dse \right ].
\end{equation}
Observe that $ \int_{\G} V^{(0)}_1(s) ds =0$ and let $S_{\G}$ be
the linear operator  defined in
 \eqref{sform}.
Then obviously, \eqref{o.4c} can be written as
\[
S_{\G} V^{(0)}_1 (\xi)=   \frac 14 B_1(\xi )   -  \frac 1{4 |\G|}
\int_{\G} B_1(s) ds
 \qquad \xi \in \G,
 \]
 and thus, applying the Dirichlet-Neumann operator (see \eqref{ma.3}) we
obtain  \eqref{z.1A}. This determines  the (constant in $\xi$)
$c_1(t) $. Now, since $V^{(0)}_1$ and $c_1(t) $ are determined we
may choose $ \alpha_2(s)$ so that \eqref{o.4} is satisfied.
   \end{proof}

\begin{rem}
Notice that $\mu_{1,0,0}$ solves
\begin{equation}
\label{4.16} \begin{split} & \Delta  \mu_{1,0,0} =0 \qquad \hbox
{{\rm for}}\qquad \xi \in \Omega \setminus \G,  \\ & \mu_{1,0,0}
(\xi) =  \frac 1 2 B_1(\xi)\qquad
   \hbox {{\rm on}}\qquad  \G.    \end{split}   \end{equation}
\end{rem}

\subsubsection {  $n-$th order term   in $ \e $, $ 3 \le n\le N$:} As previously,
  we determine  the function $\phi_n$ for $ \xi
\in \Omega \setminus \NN (\e_0)$
 from
\eqref{2.ce8}. Then, we extend the validity in $\Omega$ obtaining
\begin{equation}
\label{2.c08}
  \phi_n(
\xi)=  \frac 1 { f'(1) } \left [ \mu_{n-1} ( \xi )    +\e\Delta
\phi_{n-1}( \xi )  -f_n(\pm
1,\phi_1,\phi_2,\cdots,\phi_{n-1})(\xi) + G_{2,n-1} (\xi) \right]
\qquad \xi \in \Omega.
    \end{equation}
We then insert \eqref{2.c08} into \eqref{2.mm8}, we add and
subtract to  the next order  the quantity $\e \alpha_n(s) \bar
m'(z)$, to the left hand side of \eqref{2.mm8} and we obtain
\begin{equation}
\label{4.final} \mu_{n-1} (\e z,s ) \left [ 1- \frac {f'(\bar
m(z))} {f'( 1)} \right]  - A_n(z, s) + \e\alpha_n(s) \bar m'(z)  =
      (\LL h_n)(z,s),
\end{equation}
where  we set
 \begin{equation}
\label{2.ZZ8}
 \begin{split} &  A_n(z, s)=
      -a_{n-1}(z,s) \bar m'-
  \sum_{i=1}^{n-1} \left[ a_{n-i}(z,s) h_i'(z,s)
\right] - \sum_{i=1}^{n-2} b_{n-i}(z,s) \frac {d^2}{ds^2} h_i
(z,s)
    \\ &   - \1_{n\ge 4}\sum_{i=1}^{n-3} \left[ c_{n-i}(z,s)  \frac {d}{ds} h_i
(z,s) \right]   -\e  \Delta \phi_{n-1}(\e z,s)\left[1- \frac
{f'(\bar m (z))} {f'( 1)} \right]
        +    G_{2,n-1} (\e \xi)\left[1- \frac {f'(\bar m (z))} {f'(
1)} \right]   \cr & + \frac {f'(\bar m(z))} {f'( \pm1)} f_n(\pm
1,\phi_1,\phi_2,\cdots,\phi_{n-1})(\e z,s)   -
  f_n(m_0,m_1,m_2,\cdots,m_{n-1})(\e z,s).
     \end{split}  \end{equation}
  It is easy to verify that  for all $s \in \G$
 \[
\lim_{|z| \to \infty}  A_n(z,s) =0,
\]
where the convergence is exponentially fast.  Namely there is no
problem for those terms involving $\bar m'$, $h_i (\cdot,s)$ and
their derivatives because of the exponential convergence to zero
of all these terms for all $s \in \G$. For the remaining terms
recall that $\displaystyle{\lim_{|z| \to \infty}} f'(\bar m (z)) =
f'(\pm 1) $,  $m_i= h_i + \phi_i$ with $h_i(z, s)  \to 0$ as $|z|
\to \infty$  for all $s \in \G$, all limits being exponentially
fast. Then one obtains immediately
\[
\lim_{|z| \to \infty} \left [  1- \frac {f'(\bar m(z))} {f'(
\pm1)} \right ]   f_n(\pm1,\phi_1,\phi_2,\cdots,\phi_{n-1})(\e
z,s) = 0,
\]
exponentially fast also. We  extend  \eqref{4.final} to hold on
all of $\R$ and regard it as an equation for $ h_n (\cdot,s) $
for $s \in \G$.

\begin{lem}\label {44}
For any positive  integer $n$, $n \le N$, set
\begin{equation}
\label{zz.1} V^{(0)}_{n-1}(\xi , \G ) =   {\mathcal T}_\G \frac
14 \left [B_{n-1}(\cdot)- \frac 1 {|\G|} \int_{\G}B_{n-1}(s)  ds
\right ] \qquad  \hbox {for} \qquad
 \xi \in
\G,
  \end{equation}
where $ B_{n-1}(s) $ is defined in \eqref{EP.1}.  Then there
exist   uniquely determined  $ \alpha_n(\cdot,\G) \in C^\infty
(\G)$  and
 $h_n(\cdot,s)\in L^{\infty} ( \R)$ for $s \in \G$ with
 $h_n(0,s)=0$,
solutions of \eqref{4.final}. Moreover, $h_n (\cdot,s)  $ for all
$s \in \G$, and its derivatives with respect to $z$ decay
exponentially
  to $0 $   as $z \to \pm \infty$.\end{lem}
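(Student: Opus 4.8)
The plan is to run, at a generic order $n$ with $3\le n\le N$, the same argument already used in Lemmas \ref{42} and \ref{43}. The only genuinely new feature of \eqref{4.final} is that the already determined part $A_n$ of its right-hand side (see \eqref{2.ZZ8}) now also carries the geometric coefficients $a_{n-i},b_{n-i},c_{n-i}$ and the higher nonlinearities $f_n(m_0,\dots,m_{n-1})$, but these have just been checked to decay exponentially to $0$ together with all their derivatives in $z$, $s$ and $t$, so Lemma \ref{41} still applies. I would argue by induction on $n$, assuming that $m_0,\dots,m_{n-1}$ (equivalently $h_1,\dots,h_{n-1}$ and $\phi_1,\dots,\phi_{n-1}$), the functions $\alpha_1,\dots,\alpha_{n-1}$, the means $\langle V_j\rangle$ for $j\le n-1$ and the orthogonal parts $V^{(0)}_0,\dots,V^{(0)}_{n-2}$ are already constructed and enjoy the regularity and exponential decay prescribed in the Ansatz \ref{main} and in Lemmas \ref{234}, \ref{42}, \ref{43}. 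In particular $\mu_{n-1}=\mu_{n-1,0}+\tilde\mu_{n-1}$, with $\tilde\mu_{n-1}$ (cf. \eqref{div4}) already known and $\mu_{n-1,0}$ (cf. \eqref{rep22}) still carrying the unknowns $V^{(0)}_{n-1}$ and $c_{n-1}(t)$.

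Applying the Fredholm alternative of Lemma \ref{41} to \eqref{4.final}, and recalling that $\LL$ has one-dimensional kernel spanned by $\bar m'$, a solution $h_n(\cdot,s)\in L^\infty(\R)$ with $h_n(0,s)=0$ exists, and is then unique, exactly when the right-hand side is $L^2(\R)$-orthogonal to $\bar m'$, that is
\begin{equation*}
\int_\R \mu_{n-1}(\e z,s)\Big[1-\frac{f'(\bar m(z))}{f'(1)}\Big]\bar m'(z)\,dz
=\int_\R A_n(z,s)\,\bar m'(z)\,dz-\e\,\alpha_n(s)\int_\R\big(\bar m'(z)\big)^2\,dz .
\end{equation*}
Collecting into a single function $B_{n-1}(s)$ all the contributions already determined at this stage, namely those coming from $A_n$ and the one coming from $\tilde\mu_{n-1}$, that is defining $B_{n-1}$ as in \eqref{EP.1}, this condition becomes the analogue of \eqref{o.4},
\begin{equation*}
\int_\R\mu_{n-1,0}(\e z,s)\Big[1-\frac{f'(\bar m(z))}{f'(1)}\Big]\bar m'(z)\,dz
=B_{n-1}(s)-\e\,\alpha_n(s)\int_\R\big(\bar m'(z)\big)^2\,dz .
\end{equation*}
I would then set $\mu_{n-1,0,0}(\xi):=2\int_\G V^{(0)}_{n-1}(\eta)G(\xi,\eta)\dse+c_{n-1}(t)$ and note, by the computation in local coordinates already performed in \eqref{rm.5}, that $\mu_{n-1,0,0}-\mu_{n-1,0}\simeq\e$. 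Since $\int_\R f'(\bar m(z))\bar m'(z)\,dz=f(1)-f(-1)=0$ and $\int_\R\bar m'(z)\,dz=2$, the leading part of the solvability condition collapses to the pointwise identity $\mu_{n-1,0,0}(0,s)=\tfrac12 B_{n-1}(s)$ on $\G$. Inserting the definition of $\mu_{n-1,0,0}$ and integrating over $\G$ fixes $c_{n-1}(t)$ in terms of the not yet determined $V^{(0)}_{n-1}$, exactly as in \eqref{(o.4d)}. Putting this value of $c_{n-1}(t)$ back, and using $\int_\G V^{(0)}_{n-1}(s)\,ds=0$, the identity rewrites as $S_\G V^{(0)}_{n-1}(\xi)=\tfrac14 B_{n-1}(\xi)-\tfrac1{4|\G|}\int_\G B_{n-1}(s)\,ds$ on $\G$, with $S_\G$ the operator of \eqref{sform}; the right-hand side has zero mean on $\G$, so the Dirichlet-Neumann operator ${\mathcal T}_\G$ (cf. \eqref{ma.3}, \eqref{roma4}) may be applied and produces precisely the stated formula \eqref{zz.1}, smoothness of $B_{n-1}$ on $\G$ giving $V^{(0)}_{n-1}(\cdot,\G)\in C^\infty(\G)$. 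With $V^{(0)}_{n-1}$ and $c_{n-1}(t)$ now determined, I would finally pick $\alpha_n(\cdot,\G)$ so that the full solvability condition above holds: the residual discrepancy between $\mu_{n-1,0,0}$ and $\mu_{n-1,0}$ is ${\mathcal O}(\e)$, hence is absorbed by the term $\e\,\alpha_n(s)\int_\R(\bar m'(z))^2\,dz$, and since $\int_\R(\bar m'(z))^2\,dz=4S>0$ this determines $\alpha_n(s)\in C^\infty(\G)$ uniquely. Lemma \ref{41} then yields the unique $h_n(\cdot,s)\in L^\infty(\R)$ with $h_n(0,s)=0$, decaying exponentially together with its derivatives in $z$ and smooth in $s$ and $t$ since $A_n$ and $\mu_{n-1}$ are, which closes the induction.

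The step I expect to be the main obstacle, or at least the one demanding the most care, is the bookkeeping that makes $B_{n-1}$ well defined: one must check that every ingredient entering it ($h_i,\phi_i,\alpha_i,\mu_i$ with $i<n$, the coefficients $a_{n-i},b_{n-i},c_{n-i}$, the functions $f_n(m_0,\dots,m_{n-1})$ and $f_n(\pm1,\phi_1,\dots,\phi_{n-1})$, together with the part of $\mu_{n-1}$ already built, $\tilde\mu_{n-1}$) is genuinely available at this stage of the induction, that $A_n$ decays exponentially in $z$ with all of its derivatives so that Lemma \ref{41} delivers the claimed decay for $h_n$, and that the resulting $B_{n-1}$ is smooth on $\G$ so that, after subtracting its mean, ${\mathcal T}_\G$ returns a smooth $V^{(0)}_{n-1}$, and hence a smooth $m_n$ as demanded by the Ansatz \ref{main}. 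All the rest is a verbatim repetition of the proofs of Lemmas \ref{42} and \ref{43}.
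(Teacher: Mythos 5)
Your proposal is correct and follows essentially the same route as the paper's own proof: the same solvability condition \eqref{L.20}, the same splitting $\mu_{n-1}=\mu_{n-1,0}+\tilde\mu_{n-1}$ leading to $B_{n-1}$ as in \eqref{EP.1}, the same introduction of $\mu_{n-1,0,0}$ with the identity $\mu_{n-1,0,0}(0,s)=\tfrac12 B_{n-1}(s)$, the determination of $c_{n-1}(t)$ by integration over $\G$, the inversion via ${\mathcal S}_\G$ and ${\mathcal T}_\G$ to get \eqref{zz.1}, and finally the choice of $\alpha_n$ to absorb the ${\mathcal O}(\e)$ discrepancy before invoking Lemma \ref{41}. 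The paper leaves the inductive bookkeeping and the exponential decay of $A_n$ as brief remarks preceding the lemma, so your explicit attention to these points is consistent with, not a departure from, its argument.
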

\begin{proof}
The solvability condition is satisfied provided that
\begin{equation}
\label{L.20} \begin{split}
 \int_\R
\mu_{n-1}(\e z,s) \left [ 1   -   \frac  {f'(\bar m(z))}
  {f'(1)}  \right ]   \bar m'(z) dz  =   \int_{\R} A_n(z, s)
\bar m'(z) dz   -\e \alpha_n(s)   \int_\R \left (\bar m'(z)
\right)^2 dz.    \end{split}   \end{equation} Since in view of
\eqref{rep10}
  $
\mu_{n-1}= \mu_{n-1,0}+ \tilde \mu_{n-1} $ with $\tilde
\mu_{n-1}$ being  already   determined  to satisfy \eqref{L.20},
we require  that
\begin{equation}
\label{EP.2}
 \int_\R \mu_{n-1,0}(\e z, s) \left [1   -   \frac  {f'(\bar m(z))}
  {f'(1)}  \right ]   \bar m'(z) dz
  = B_{n-1} (s)  -\e \alpha_n(s)  \int_\R
\left (\bar m'(z) \right)^2 dz,
   \end{equation}
where
\begin{equation}
\label{EP.1}
  B_{n-1} (s)  =   \int_{\R}\left \{  A_n(z,s) - \tilde \mu_{n-1} (\e z,s) \left [ 1   -   \frac  {f'(\bar
m(z))}
  {f'(1)}  \right ]   \right\}
\bar m'(z) dz.
\end{equation}
We set
\begin{equation}
\label{ap.51}
  \mu_{n-1,0,0}(\xi) =
2\int_{\G } V_{n-1}( \eta ) G(\xi,
 \eta) \dse+ c_{n-1} (t).   \end{equation}
Since $ \mu_{n-1,0}(\xi, t)- \mu_{n-1,0,0}(\xi, t) \simeq \e $
then we may determine $ V_{n-1}$ by imposing
\begin{equation}
\label{EP.22}
 \int_\R \mu_{n-1,0,0}(0,s) \left [1   -   \frac  {f'(\bar m(z))}
  {f'(1)}  \right ]   \bar m'(z) dz
  = B_{n-1} (s),
  \end{equation}
obtaining thus
\begin{equation}
\label{EP.2a}
 \mu_{n-1,0,0}(0,s)  = \frac 12 B_{n-1} (s)  \qquad s\in \G.    \end{equation}
Inserting  \eqref{ap.51} in \eqref{EP.2a} and integrating over
$\G$ we get
\begin{equation}
\label{constant}
 c_{n-1} (t) = \frac 1 {|\G|} \left [\frac 12 \int_{\G} B_{n-1} (\eta ) \dse  - 2  \int_{\G}
\dsx \int_{\G } V^{(0)}_{n-1}( \eta) G(\xi,
 \eta)  \dse  \right ].  \end{equation}
We insert \eqref{constant} into \eqref{ap.51}, so, \eqref{EP.2a}
gives
 \[
  S_{\G} V^{(0)}_{n-1} (\xi) = \frac 14 \left [  B_{n-1} (\xi  )- \frac 1 {|\G|}  \int_{\G}
B_{n-1} (s) {\rm d} s \right ] \qquad \xi \in \G,
 \]
and \eqref{zz.1} follows.  This determines $V^{(0)}_{n-1}$ first
and then $ c_{n-1}(t)$. Therefore, we may define $\alpha_n$ in
order to satisfy \eqref{EP.2}. \end{proof}

\subsection { Proof of Theorem  \ref{re.3}}

To complete the proof of  Theorem \ref{re.3}  we need to estimate
the remainder term (cf. \eqref{v.10}) given by
\begin{equation}
\label{v.100}
 \begin{split} & \e R_2(\xi, t, \e) =  \e \tilde R_2(\xi, t, \e) - \e^{N+1} \alpha_N(s(\xi),t) \bar m'( \frac
{d(\xi,\G)} {\e}).    \end{split}
 \end{equation}
Since \eqref{tue.1} holds true, we obtain that
\begin{equation}
\label{Z.4} \sup_{\xi \in \Omega} \sup_{t \in [0,T]} | R_2(\xi,t,
\e) | \le C \e^{N}.   \end{equation} So, Theorem \ref{re.3} is
proved. \qed

\section{Proof of Theorem \ref {1.1}}
The  proof of   Theorem \ref {1.1}  is an immediate consequence
of the following  result.

\noindent\begin{thm} \label{1}Take $N>1$ and  $G_1$ and $G_2$   as in Assumption A1. There exist vector fields $V_j$,
$j=0,\cdots,(N-1)$ and  functions $m_j$, $j=0,\cdots, N$  from ${\mathcal M}$ to  $C^\infty(\Om)$  as in
the Ansatz \ref{main} such that the following holds.  For any
$\G_0 \in \MM$, choose $k_0 \ge k (\G_0)$, set $\e_0 = \frac 1 {2
k_0}$  and let $T$  be the  lifetime of the solution of
\eqref{e0} in ${\mathcal  M}$, according to  \eqref{ll.2}. Then
for all $t < T$ and
   for all
$\e \in  (0, \e_0]$  we can construct   $ (\tilde m^{(N)}, \tilde
\mu^{(N-1)}) \in C^\infty (\Om \times [0,T]) $  where $\tilde
m^{(N)}$ is an $\e^N$ modification of $m^{(N)}$, i.e.
$\sup_{(\xi,t)\in \Om \times [0,T]} | \tilde m^{(N)}(\xi,t)
-m^{(N)}(\xi,t)| \le C \e^N $ and $\tilde \mu^{(N-1)}$ is an
$\e^{N-1} $ modification of $\mu^{(N-1)}$, i.e.  $
\sup_{(\xi,t)\in \Om \times [0,T]} |\tilde \mu^{(N-1)}(\xi,t)-
 \mu^{(N-1)}(\xi,t)| \le C\e^{N-1} $
satisfying

\begin{equation}
\label{v.5}  \begin{split}   &  \partial_t 
\tilde m^{(N)}(\xi,t)   =
   \Delta
\tilde \mu^{(N-1)}(\xi, t) + \sum_{j=0}^{N-1}  \e^j G_{1,j}(\xi)
\quad  \hbox { in }\quad \Omega \times (0,T),
\\ &
 \tilde \mu^{(N-1)}(\xi, t)  = -\e \Delta   \tilde m^{(N)}(\xi,t)
   +\frac 1 \e  f(\tilde m^{(N)}(\xi,t))    - \sum_{j=0}^{N-1}  \e^j G_{2,j}(\xi) +  R^{(N)}(\xi,t, \e)
\quad   \hbox { in }\quad
 \Omega \times (0,T),     \end{split}  \end{equation}
 where
\[  \sup_{\xi \in \Om} \sup_{t \in [0,T]} \left |  R^{(N)}(  \xi,
t, \e) \right | \le C \e^{N-1} .  \]

 Further, $\tilde \mu^{(N-1)}(\cdot,t)$ and $\tilde m^{(N)}(\cdot,t) $, for $ t
\in [0,T] $,  satisfy Neumann homogeneous  boundary conditions on
the boundary of $\Om$.   In addition
\begin{equation}
\label{v.6} \sup_{t \in [0,T]} \sup_{\xi \in\Om} | \tilde
\mu^{(N-1)}(\xi, t) - \mu_{0,0,0}^F(\xi,t) | \le C\e,
   \end{equation}
where $\mu_{0,0,0}^F$ is the solution of
\begin{equation}
\label{no.2} \Delta\mu (\xi) = -G_{1,0} (\xi)   \qquad \xi \in
\Om\setminus \G^{(N)}_t,
\end{equation}
subject to the boundary conditions
\begin{equation}
\label{ms2b} \mu (\xi) =  2S K(\xi) -   G_{2,0} (\xi)  \quad
\hbox {\rm on}\quad \G^{(N)}_t,\;\;\;\;
  \partial_ \nu \mu  = 0  \quad \hbox {\rm on}\quad
\partial \Om,
\end{equation}
and
\begin{equation}
\label{v.7}\sup_{ t \in [0,T]} \sup_{\xi \in \NN (\e_0,
\G^{(N)}_t)} \left |   \tilde m^{(N)}(\xi,t) - \bar m\left (
\frac {d (\xi,\G^{(N)}_t) }\e    \right  )\right | \le C \e  ,
\end{equation}

\begin{equation}
\label{v.8} \sup_{t \in [0,T]}\sup_{\xi \in \Om \setminus \NN
(\e_0), \G^{(N)}_t)} \left |   \tilde m^{(N)}(\xi,t) \mp  1
\right | \le C \e \ .
\end{equation}
 \end{thm}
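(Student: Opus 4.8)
The plan is to correct the pair $(\mu^{(N-1)},m^{(N)})$ furnished by Theorem \ref{re.2} and Theorem \ref{re.3} by additive terms that are small in $\e$, so as to turn \eqref{L.1b} into the first identity of \eqref{v.5} exactly. Recall that those functions already obey \eqref{L.1b} with a remainder $R_1$ satisfying $|R_1|\le C(T)\e^{N-1}$ and $|\int_\Om R_1\,{\rm d}\xi|\le C(T)\e^N$ (see \eqref{2.014}, \eqref{2.015}), and \eqref{G.1} with a remainder $R_2$ satisfying $|R_2|\le C\e^{N}$. First I would set $\langle R_1\rangle(t):=\frac{1}{|\Om|}\int_\Om R_1(\xi,t,\e)\,{\rm d}\xi$ and define $\psi(\cdot,t)\in C^\infty(\Om)$ as the unique solution of the Neumann problem $\Delta\psi(\xi,t)=R_1(\xi,t,\e)-\langle R_1\rangle(t)$ in $\Om$, $\partial_\nu\psi=0$ on $\partial\Om$, normalized by $\int_\Om\psi\,{\rm d}\xi=0$; this is solvable since the right-hand side has zero mean, $\psi$ is $C^\infty$ in $(\xi,t)$ because $R_1$ is, and the two-dimensional elliptic estimate $\|\psi\|_{W^{2,p}(\Om)}\le C\|R_1\|_{L^p(\Om)}$ together with $W^{2,p}\hookrightarrow L^\infty$ and $|R_1|\le C\e^{N-1}$ gives $\sup_{\Om\times[0,T]}|\psi|\le C\e^{N-1}$. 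Next I would put $c(t):=-\int_0^t\langle R_1\rangle(\tau)\,{\rm d}\tau$, so that $c\in C^\infty([0,T])$, $c(0)=0$ and $|c(t)|\le C(T)\e^{N}$ by \eqref{2.015}. Finally I would define $\tilde\mu^{(N-1)}:=\mu^{(N-1)}+\psi$ and $\tilde m^{(N)}:=m^{(N)}+c(t)$; these lie in $C^\infty(\Om\times[0,T])$, are the claimed $\e^{N-1}$ resp. $\e^{N}$ modifications, satisfy the homogeneous Neumann conditions (inherited from $\mu^{(N-1)},m^{(N)},\psi$, the shift $c$ being constant in $\xi$), and $\tilde m^{(N)}(\cdot,0)=m^{(N)}(\cdot,0)$ since $c(0)=0$.

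The verification of \eqref{v.5} is then a direct substitution. Since $c$ is $\xi$-independent, $\Delta\tilde m^{(N)}=\Delta m^{(N)}$ and $\Delta\tilde\mu^{(N-1)}=\Delta\mu^{(N-1)}+R_1-\langle R_1\rangle$; combining this with \eqref{L.1b} and $\partial_t c(t)=-\langle R_1\rangle(t)$ gives $\partial_t\tilde m^{(N)}=\Delta\mu^{(N-1)}+\sum_{j=0}^{N-1}\e^j G_{1,j}+R_1-\langle R_1\rangle=\Delta\tilde\mu^{(N-1)}+\sum_{j=0}^{N-1}\e^j G_{1,j}$, i.e. the first equation of \eqref{v.5} with no remainder. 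For the second, \eqref{G.1} gives $\tilde\mu^{(N-1)}=-\e\Delta m^{(N)}+\tfrac1\e f(m^{(N)})-\sum_{j=0}^{N-1}\e^j G_{2,j}+R_2+\psi$, and since $-\e\Delta\tilde m^{(N)}=-\e\Delta m^{(N)}$ and $\tfrac1\e f(m^{(N)})=\tfrac1\e f(\tilde m^{(N)})+\tfrac1\e\bigl(f(m^{(N)})-f(m^{(N)}+c(t))\bigr)$ with the last term bounded by $\tfrac1\e\sup|f'|\,|c(t)|\le C\e^{N-1}$ on the bounded range of values attained by $m^{(N)}$, I obtain the second equation of \eqref{v.5} with $R^{(N)}:=R_2+\psi+\tfrac1\e(f(m^{(N)})-f(\tilde m^{(N)}))$, whence $\sup_{\Om\times[0,T]}|R^{(N)}|\le C\e^{N-1}$, the dominant contributions being $\psi$ and the $f$-difference.

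For the leading-order estimates I would argue as follows. For \eqref{v.6}, write $\tilde\mu^{(N-1)}=\mu_0+\sum_{i=1}^{N-1}\e^i\mu_i+\psi$ with $\mu_0=\mu_{0,0}+\tilde\mu_0$ as in Lemma \ref{230}. By Remark \ref{nov1}, $\mu_{0,0,0}^F=\mu_{0,0,0}+\tilde\mu_{0,0,0}$ is exactly the solution of \eqref{no.2}--\eqref{ms2b}, and the proof of Lemma \ref{42} records $\mu_{0,0}-\mu_{0,0,0}=O(\e)$ and $\tilde\mu_0-\tilde\mu_{0,0,0}=O(\e)$ uniformly; since $\sum_{i\ge1}\e^i\mu_i=O(\e)$ and $|\psi|\le C\e^{N-1}\le C\e$, \eqref{v.6} follows. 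For \eqref{v.7} and \eqref{v.8}, recall $\tilde m^{(N)}=m_0(d(\xi,\G^{(N)}_t)/\e)+\sum_{j=1}^N\e^j m_j+c(t)$, where $m_j=h_j+\phi_j$ with $h_j$ bounded and supported in $\NN(\e_0,\G^{(N)}_t)$, $\phi_j$ globally Lipschitz, hence $\sum_{j=1}^N\e^j m_j=O(\e)$, while $|c(t)|\le C\e^{N}$. Inside $\NN(\e_0,\G^{(N)}_t)$ the cut-off in \eqref{e1a} satisfies $r(\tfrac\e{\e_0}z)=1$ for $|d|<\e_0/2$, and where it differs from $1$ the quantity $m_0-\bar m=(1-r(\tfrac\e{\e_0}z))({\rm sgn}(z)-\bar m(z))$ is exponentially small in $1/\e$; this gives \eqref{v.7}. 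Outside $\NN(\e_0,\G^{(N)}_t)$ one has $m_0(d/\e)={\rm sgn}(d)=\pm1$ and $h_j\equiv0$, so $\tilde m^{(N)}\mp1=\sum_{j=1}^N\e^j\phi_j+c(t)=O(\e)$, which is \eqref{v.8}.

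I expect the only genuinely delicate point to be the treatment of the mass defect. One cannot correct $\mu^{(N-1)}$ by a potential alone: because $\int_\Om R_1\,{\rm d}\xi$ is merely $O(\e^N)$ and not exactly zero, the Neumann problem $\Delta\psi=R_1$ is not solvable, so one is forced to absorb the defect into a time-dependent but spatially constant shift $c(t)$ of $m^{(N)}$. The compatibility of this shift with the prescribed initial datum is precisely the condition $c(0)=0$, which holds because $c$ is the time integral starting at $0$ of $\langle R_1\rangle$. The only other care needed is that the elliptic estimate for $\psi$ not lose powers of $\e$, which is why the two-dimensional Sobolev embedding is invoked; everything else is bookkeeping built directly on Theorems \ref{re.2} and \ref{re.3}.
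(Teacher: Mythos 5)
Your proposal is correct and follows essentially the same route as the paper: the paper likewise defines $\tilde m^{(N)}=m^{(N)}-\int_0^t\bar R_1(\tau,\e)\,{\rm d}\tau$ (your $c(t)$) and $\tilde\mu^{(N-1)}=\mu^{(N-1)}+v$ with $\Delta v=R_1-\bar R_1$ under Neumann conditions (your $\psi$), and produces the identical remainder $R^{(N)}=R_2+v+\tfrac1\e\bigl(f(m^{(N)})-f(\tilde m^{(N)})\bigr)$ with the same order counting. The only difference is that you spell out the elliptic estimate for $\psi$ and the verification of \eqref{v.6}--\eqref{v.8} in more detail than the paper, which dispatches these by construction and by Remark \ref{nov1}.
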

\begin {proof}

 Set
\begin{equation}
\label{v.3}
 \tilde m^{(N)}(\xi, t)= m^{(N)}(\xi, t)-
\int_0^t \bar R_1 (\t ,\e)  d \t,   \end{equation} where
\[ \bar R_1 (t,\e)=  \frac 1 {|\Om|}
\int_\Om R_1(\xi, t, \e) {\rm d }\xi,
\]
and  $  R_1(\xi, t, \e)$  is the remainder in Theorem \ref{re.2},
defined in \eqref{remainder} and estimated in \eqref {2.014} and
\eqref {2.015}. Let us denote by
\begin{equation}
\label{v.4}
 \tilde \mu^{(N-1)}(\xi, t)= \mu^{(N-1)}(\xi, t)+ v(\xi, t),
 \end{equation}
where $ v(\xi, t)$ solves
\begin{equation}
\label{z.10}
 \begin{split} & \Delta v(\xi, t)=  R_1(\xi, t, \e)- \bar R_1(t,\e)   \qquad  \hbox
{for} \qquad \xi \in \Omega,  \\ &  \partial
_\nu  v= 0  \qquad \hbox {on} \qquad \partial \Om , \end{split}
\end{equation}  with the further requirement
\[
 \int_\Om v (\xi, t) {\rm d } \xi=0 \qquad t \in [0,T].
\]
 Since $ |R_1(\xi, t, \e)|\le C(T)\e^{N-1}$ we have that  $ |v(\xi,
t)| \le C\e^{N-1} $. The functions  $ \tilde m^{(N)} $ and $\tilde
\mu^{(N-1)}$ satisfy \eqref{v.5}. Namely  the first equation of
\eqref{v.5} is satisfied by Theorem \ref{re.2} and by
construction, see \eqref{v.3} and  \eqref{z.10}.  The second
equation is obtained   from Theorem \ref{re.3} adding and
subtracting terms to obtain $\tilde \mu^{(N-1)}$ and $\tilde
m^{(N)}$. We obtain
\[
 \tilde \mu^{(N-1)}   = \mu^{(N-1)} + v =
-\e \Delta  \tilde m^{(N)}  + \frac 1 \e f(\tilde m^{(N)} ) +
R^{(N)},
\]
where
\[
   R^{(N)} \equiv  R^{(N)} (\xi,t,\e)= \frac 1 \e \left [  f \left (\tilde m^{(N)}+ \int_0^t
\bar R_1(\t,\e) {\rm d }\t \right ) - f(\tilde m^{(N)}) \right ]
+ R_2 + v,
 \]
and $R_2$  is the remainder in Theorem \ref{re.3}, see
\eqref{v.100}. Since $ \bar R_1 ={\mathcal O}(\e^N)$, $R_2=
{\mathcal O}(\e^{N})$,  $v= {\mathcal O}(\e^{N-1})$ and
\begin{equation}
\label{NY.2}
 \frac 1 \e \left [  f \left (\tilde m^{(N)}+ \int_0^t \bar
R_1(\t,\e) {\rm d }\t \right ) - f(\tilde m^{(N)}) \right ] \le
\frac C \e \int_0^t \bar R_1(\t,\e) {\rm d }\t = {\mathcal
O}(\e^{N-1}),  \end{equation} then the second equation of
\eqref{v.5} is satisfied as well.  In Remark \ref {nov1}   it is
explained that  if  $\mu_{0,0,0}^F = \mu_{0,0,0}+ \tilde
\mu_{0,0,0} $, where $ \mu_{0,0,0}$ and  $\tilde \mu_{0,0,0}$ are
the quantities defined in \eqref {roma.1} and \eqref {roma.22}
then it verifies \eqref{no.2}  and \eqref {ms2b}.  The  relation
\eqref {v.6} is then immediate from their definition and Theorem
\ref{re.2}.
 The \eqref{v.7} and \eqref{v.8}  are satisfied by construction of
the $m^{(N)}$.
    Theorem \ref{1.1} is then proved.
\end {proof}

\medskip

\section{Appendix}
\subsection*{A.1: The Dirichlet--Neumann operator}   We  recall  in this section the main
properties of the  Dirichlet--Neumann operator which we have been
using through  the paper. Some of these  results were already
presented  in the  Appendix of \cite {CCO}. Let $ G(\xi, \eta) $
be  the Green's function in $\Omega$  defined in \eqref {2.1000}
and verifying \eqref {NY.4}. To define the  Dirichlet--Neumann
operator   we consider the following single layer potentials.
Given a smooth function $h$ defined on $\G \in \MM$,  consider
the {\it single layer potential}
\[
\phi_h(\xi) = \int_\G G(\xi,\eta)h(\eta)\dse,
\]
where $\dse$ denotes the arc length measure along $\G$.  The
function $\phi_h$ satisfies a Neumann  boundary condition  on
$\partial\Omega$, and also the equation
\[
\Delta \phi_h (\xi) = h(\xi)  -{1\over |\Omega|}\int_\G
h(\eta)\dse\  \quad \xi \in \Gamma.
\]
The curve $\G$ separates $\Omega$ in two subsets. We denote by
$\Om_\G^-$  the interior of $\G$ and  by $\Om_\G^+$ its exterior.
We denote by $n$ the  unit outer  normal to $\Om_\G^-$. There is
a discontinuity in   the normal derivatives of $\phi_h$ across
$\G$, so,
 we have that
\begin{equation}
\label{pota2} h(\xi) =  \frac 12 \left [ \partial_\nu   \phi_h
 \right]_{\G}(\xi), \end{equation}
where the right-hand side denotes the jump of the normal
derivatives at $\xi \in \G$, i.e.
\[
\left [\partial \nu \phi_h
 \right]_{\G}(\xi) =
\left( \partial_\nu \phi_h  \right)_{\Om_\G^+} (\xi) -
\left( \partial_\nu \phi_h \right)_{\Om_\G^-} (\xi).
\]
This is a well known result from potential theory  \cite{GT}.
For $\xi$ away from $\G$,
\[
\Delta \phi_h(\xi) = - {1\over |\Omega|}\int_\G h(\eta)\dse .\]
Thus, the single layer potential is harmonic away from $\G$ if and
only if $\int_\G h(\xi)\dsx = 0$. Otherwise, it is subharmonic or
superharmonic, according to whether $-\int_\G h(\xi)\dsx$ is
positive or negative. Every continuous function $\phi$ harmonic
away from $\G$, satisfying the Neumann boundary condition and the
following relation
\begin{equation}
\label{pota3} \int_\Om \phi(\xi){\rm d}\xi = 0, \end{equation} is
the single layer potential of a uniquely determined function $h$
defined on $\G$  and satisfying
\begin{equation}
\label{pota4} \int_\G h(\xi)\dsx = 0. \end{equation} Indeed, if
$\phi_h$ is such a single layer potential, then from
\eqref{NY.4}, we get that $\int_\Om\phi_h(\xi){\rm d}\xi = 0$.

On the other hand, let $\phi$ be any continuous function that is
harmonic on $\Om_\G^-$ and $\Om_\G^+$, and which satisfies
\eqref{pota3}. Let us define $h$ in $\G$ by
\[
h(\xi) =  \frac 12 \left [ \partial_\nu    \phi
 \right]_{\G}(\xi),
\]
and refer to this as the Neumann data for $\phi$. By the
divergence theorem we obtain
\[
2 \int_\G  h(\xi) \dsx = \int_\G \partial_\nu  \phi^+ \dsx  -
\int_\G \partial_\nu  \phi^- \dsx =  - \int_{\Om \setminus \G
}\Delta \phi {\rm d}\xi   = 0 ,
\]
where $\phi^{\pm}$ denotes the restriction of $\phi$ in
$\Om^{\pm}$. Hence, $h$ satisfies \eqref{pota4}.

Notice that  $\phi - \phi_h$ satisfies the Neumann boundary
conditions and
\[
\left [ \partial_\nu  (\phi- \phi_h)
 \right]_{\G}(\xi) = 0 .
\]
This means that $\phi - \phi_h$ is a constant. Since the integral
is zero then $\phi  = \phi_h$. This proves the one to one
correspondence between single layer potentials of functions $h$
satisfying \eqref{pota4}, and continuous functions $\phi$ that
are harmonic on $\Om_\G^-$ and $\Om_\G^+$, and satisfy
\eqref{pota3}.

Next, given a continuous function $\phi$ that is harmonic on
$\Om_\G^-$ and $\Om_\G^+$, whether or not \eqref{pota3} is
satisfied, we define the function $g$ on $\G$ by $g := \phi|_\G$.
We naturally refer to $g$ as the Dirichlet data for $\phi$. The
Neumann data is $\displaystyle{\left [ \partial_\nu    \phi
 \right]_{\G}}$.
The Dirichlet--Neuman operator ${\mathcal T}_\G$ is defined by
\begin{equation}
\label{roma4} {\mathcal T}_\G g = \frac 12 \left [ \partial_\nu  \phi
 \right]_{\G},  \end{equation}
where $\phi$ is the continuous function that is harmonic in
$\Om_\G^-$ and $\Om_\G^+$, with $\phi|_\G = g$.

A simple argument shows that ${\mathcal T}_\G$ is a positive
Hermitian operator. Indeed, let $\psi$ be continuous on $\Om$,
and  harmonic on $\Om_\G^-$ and $\Om_\G^+$, with $\psi|_\G = h$.
Then it follows that
\[
\begin{split}
2\int_\G h{\mathcal T}_\G g {\rm d}s &=  \int_\G \psi \left [ 
 \partial_\nu   \phi
 \right]_{\G} {\rm d}s\\
&= - \int_{\Om_\G^+} \nabla \cdot(\psi (\nabla \phi)) {\rm d}\xi -
\int_{\Om_\G^-} \nabla \cdot(\psi (\nabla \phi)) {\rm d}\xi\\
&= - \int_\Om \nabla \psi \cdot \nabla \phi {\rm d}\xi
.\end{split}
\]
Taking $h=1$, so that $\psi = 1$, we further see that the range
of ${\mathcal T}_\G$ is orthogonal to the constants. We let
${\mathcal T}_\G $ denote the Friedrichs extension of ${\mathcal
T}_\G $. It is easy to see, and well known, that the  form domain
of ${\mathcal T}_\G $ is the Sobolev space $H^{1/2}(\G)$, and that
the kernel consists exactly of the constants. There is an
explicit formula for the inverse of ${\mathcal T}_\G $ restricted
to the orthogonal complement of the constants; we denote this by
${\mathcal S}_\G $. Indeed, let $v$ be any function on $\G$ with
$\int_\G v(s){\rm d}s = 0$. Since the single layer potential
$\phi_v$ for $v$ has Neumann data $v$, all we need to do is to
subtract a constant to make this function orthogonal to the
constants on $\G$, instead of being orthogonal to the constants on
$\Om$. Therefore, the inverse ${\mathcal S}_\G$ is given by
\begin{equation}
\label{sform} {\mathcal S}_\G v(\xi) = \int_\G
G(\xi,\eta)v(\eta)\dse - {1\over |\G|}\int_\G \int_\G
G(\xi,\eta)v(\eta)\dse\dsx\;\;\;\;\xi \in \G .
\end{equation} It is easily checked that the inverse operator is self adjoint on
the orthogonal complement of the constants. Now let $h$ be an
arbitrary smooth function on $\G$ satisfying $\int_\G h(s){\rm
d}s = 0$, and consider the single layer potential
\[
\phi(\xi) = \int_\G G(\xi,\eta)h(\eta)\dse \;\;\;\;\xi \in \Om .\]
In general, the Dirichlet data for $\phi$ do not integrate to
zero on $\G$ and hence, are not directly related to the Neumann
data through the Dirichlet--Neumann operator. However, we can
correct this by subtracting a constant and defining the function
\[
\tilde \phi(\xi) = \phi(\xi) - {1\over |\G|}\int_\G
\phi(\eta)\dse.
\]
Then obviously we have
\begin{equation}
\label{ma.3}
 \begin{split} & \tilde \phi|_\G  = {\mathcal S}_\G h, \\ &
h = {\mathcal T}_\G \tilde \phi . \end{split} \    \end{equation}
We can now express the vector field $V$ driving the
Mullins--Sekerka flow as
\begin{equation}
\label{ma.2} V = {\mathcal T}_\G \left(K - {1\over |\G|}\int_\G
K(s){\rm d}s\right). \end{equation}

We close by establishing notation for the two harmonic extension
operators that will arise throughout what follows:

The {\it Neumann harmonic extension operator} ${\mathcal
E}_{\G,N}$ is defined by
\begin{equation}
\label{enform} \left ( {\mathcal E}_{\G,N} v\right ) (\xi) =
\int_\G G(\xi,\eta)v(\eta)\dse - {1\over |\G|}\int_\G \int_\G
G(\xi,\eta)v(\eta)\dse\dsx \quad \xi \in \Om ,
\end{equation}
where $v$ is a function on $\G$ satisfying
\[
\int_\G v(\xi)\dsx = 0 .
\]
Notice that $\left ( \NExt v\right ) (\xi)$ is the unique
function that is continuous on $\Om$, harmonic  on $\Om
\backslash \G$ satisfying Neumann boundary conditions on
$\partial \Om$, with Neumann data $v$, and with zero integral
over $\G$.

The {\it Dirichlet harmonic extension operator} ${\mathcal
E}_{\G,D}$ is defined by setting $  {\mathcal E}_{\G,D} g (\xi)$
to be the harmonic function $\phi$ on $\Om\backslash \G$ with
Neumann boundary conditions on $\partial \G$, and $\phi|_\G = g$.
Here, there is no restriction on the integral of $g$ over $\G$.
Naturally, the Dirichlet extension can be expressed in terms of
the Neumann extension and the Dirichlet--Neumann operator.
Relations \eqref{sform} and \eqref{enform} give that
\begin{equation}
\label{extrel}
  {\mathcal E}_{\G,D} g  (\xi) = {\mathcal E}_{\G,N}\left({\mathcal T}_\G
\left(g -  {1\over |\G|}\int_\G g(\eta)\dse\right)\right) (\xi)  +
{1\over |\G|}\int_\G g(\eta)\dse \;\;\;\;\xi \in \Om
.\end{equation} \vskip0.5cm \nada {
\subsection*{A.3: Basic properties of potential theory.}
We recall some basic fact about potential theory that we have
been using trough the paper.

 Under the compatibility condition $\displaystyle{\int_\Omega
f(\xi) d \xi =0}$, the unique solution of the
 equation
\begin{equation}
\label{2.10} \begin {split}
 & \Delta v(\xi)  = f(\xi)\;\;\;\;
\hbox{for}\;\;\;\;\xi \in   \Omega, \cr &  
 \partial _\nu  v = 0 \qquad \hbox {on} \quad \partial \Om
\end  {split}
\end{equation}
satisfying $\displaystyle{\int_\Omega v(\xi) d \xi =0}$, is given
by the volume potential
\begin{equation}
\label{2.114}
 v(\xi) =  \int_\Omega  G(\xi, \eta) f(\eta) d \eta  .       \end{equation}
 In \eqref {2.10}  $\nu$ is the unit outward normal to $\partial \Om$.
All the other solutions with Neumann boundary conditions differ
from this one by a constant. We are mainly concerned on   the
potential which are the sum of single layer potential and volume
potential.

Take
$$ u (\xi) = \int_\G G(\xi,\eta)h(\eta)\dse +   \int_\Omega  G(\xi, \eta) f(\eta) d \eta, \quad  \xi \in \Om  $$
Then $ u$ solves
\begin{equation}
\label{2.10a} \begin {split}
 & \Delta u(\xi) = -\frac 1 {|\Om|}  \int_\G h(\eta)\dse +f(\xi)   -\frac 1 {|\Om|}  \int_\Omega  f(\eta) d \eta
\hbox{for}\;\;\;\;\xi \in   \Omega \setminus \G, \cr & h(\eta) =
 \frac 12 \left [  \partial_\nu \phi_h \right]_{\G}(\xi)
\cr & \partial_\nu v = 0 \qquad \hbox {on}
\quad \partial \Om
\end  {split}
\end{equation}
}

 \vskip0.5cm
\subsection*{A.2: The  expansion in $\e$ of the Laplacian in
local coordinates}  \label {A.2} Let  $ f (z, s)$, with $z=\frac
{d}{\e}$,  be a $C^2 $ function from $ \R \times \G$ to $\R$.
Then, in the two dimensional case, we have that
\begin{equation}
\label{C.11}
 \begin{split}  \e^2 \Delta f  (z, s)&= \frac 1{ 1-K(s)\e z}
 \left \{ \left ( ( 1-K(s)
\e z) f_z \right )_z +\e^2 \left (\frac{ f_s}{ 1-K(s) \e z}
\right)_s \right \}
\\ &=
 f_{zz} - \e K(s)f_z \frac 1{ 1-K(s) \e z} +\e^2\frac{ f_{ss}}{( 1-K(s)
\e z)^2}+ \e^3 f_s \frac{ \frac d {ds } K(s) z }{( 1-K(s) \e
z)^3}.  \end{split}
   \end{equation}
Recalling that for $|x| <1$ it holds that
\[
 \frac 1 {(1-x)} = \sum_{n=0}^\infty x^n,\;\;\;\;\;
  \frac 1 {(1-x)^2} =
\sum_{n=0}^\infty n x^{n-1},\;\;\;\;\;
 \frac 1 {(1-x)^3}  =
\frac 12 \sum_{n=0}^\infty n(n-1) x^{n-2}, \]    we may rewrite
\eqref{C.11}  as follows
\begin{equation}
\label{C.100}  \e^2 \Delta f
 = f_{zz} + \sum_{n=0}^\infty \e^{n+1}  \left \{ a_{n+1}(z,s)     f_z + b_{n+1}(z,s)
f_{ss} + c_{n+1}(z,s) f_s \right \},
\end{equation}
where
\begin{equation}
\label{coef}  \begin{split} & a_{n+1}(z,s)=-K^{n+1}(s) z^n , \\ &
 b_{n+1}(z,s)=n K^{n-1}(s) z^{n-1}   , \\ &
 c_{n+1}(z,s)=\frac 12 n(n-1)  z^{n-1}   K^{n-2}(s)  \frac d {ds } K(s)  .  \end{split}  \end{equation}

 \vskip0.5cm
\subsection*{A.3: proof of Theorem \ref {thFaprox}}  The proof  goes very much as in \cite [Theorem 2.1] {A-B-C}.  Hence we outline only those points where the presence of the $G_1$ and $G_2$  makes a  difference. 
First of all  the constructed  functions $m^{(N)} (t)$, $t \in [0,T]$ satisfy the requirements needed to apply the spectral estimates    proven by
\cite {AF}  and  \cite {Chen2}.    Namely,  see \eqref {e1},  \eqref {e2}  and Lemma \ref {42},
we can always write  for $ \xi \in \Omega$ and  $\in [0,T]$
 $$ m^{(N)}(\xi,t) =  m_0  \Big{(}\frac {d(\xi,\G^{(N)}_t)} \e
\Big{)} +  \e    \tilde h_1 \Big{(}{d(\xi,\G^{(N)}_t)\over \e},
s(\xi,\G^{(N)}_t)\Big{)}  + \e^2  q^\e (\xi,\G^{(N)}_t)
     + \e  \phi^\e (\xi,\G^{(N)}_t), $$
    where  $m_0$ is given in \eqref {e1a},  $\tilde h_1 (\cdot, s)$  is the function determined in  Lemma \ref {42}  which is even  as function of $z \in \R$ for any $s \in \G^{(N)}_t$,
 equal to $ 0 $ in $\Om \setminus \NN
  (\e_0)$ and when $d(\xi,\G^{(N)}_t)=0$.    We denote by $\e^2  q^\e (\xi,\G^{(N)}_t)$ the remaining functions in the expansion of $m^{(N)}$ which are equal to zero in $ \Omega \setminus \NN(\e_0)$
  and by $ \e  \phi^\e (\xi,\G^{(N)}_t)$  the corrections to $\pm 1$ in $ \Omega \setminus \NN(\e_0)$.
 Recall 
$m^{(N)}(\cdot ,t)$  are $C^\infty(\Omega)$ for any $t \in [0,T]$.  We immediate have, since $\bar m (\cdot)$ is odd  while  $h_1 (\cdot,s)$ is  even,  
$$ \int_{\R} \tilde h_1(z,s) (\bar m'(z))^2 f''(\bar m(z)) dz = 6 \int_{\R} \tilde h_1(z,s) (\bar m'(z))^2  \bar m(z) dz =0, \qquad \forall s \in \G^{(N)}_t.  $$
This  is one of the requirement needed to apply the spectral estimates.  The  remaining requirements
are immediately satisfied by the  smoothness of $m^{(N)}(\cdot ,t)$ and by the fact that the $\Phi_j$, $ j=1, \dots, N$   
in the expansion of $ m^{(N)}$ satisfy a  global Lipschitz bound independent on $\e$. 

Then one proceeds as in \cite {A-B-C}. 
Write \eqref{1a}  as the  the following :
\begin{equation}\label{sysch}
\begin{split}
&\partial_tm^\eps=\Delta\mu^\eps+G_1\;\;\;\;\mbox{in}\;\;\Omega_T,\\
&\mu^\eps=-\eps\Delta m^\eps+\frac{1}{\eps}f(m^\eps)-G_2\;\;\;\;\mbox{in}\;\;\Omega_T,\\
&m^{\eps}(\xi,0)=m^{\eps}_0(\xi)\;\;\;\;\xi\in\;\;\Omega,\\
&\partial_\nu m^{\eps}=\partial_\nu\Delta
m^{\eps}=0\;\;\;\;\mbox{on}\;\;\partial\Omega,
\end{split}
\end{equation}
  and \eqref{ch0N} the  the following:
\begin{equation}\label{syscha}
\begin{split}
&\partial_tm^{(N)}=\Delta\mu^{(N)}+\sum_{j=0}^{N-1}\eps^jG_{1,j}\;\;\;\;\mbox{in}\;\;\Omega_T,\\
&\mu^{(N)}=-\eps\Delta m^{(N)}+\frac{1}{\eps}f(m^{(N)})-\sum_{j=0}^{N-1}\eps^jG_{2,j}+R^{(N)}\;\;\;\;\mbox{in}\;\;\Omega_T,\\
&m^{(N)}(\xi,0)=m^{\eps}_0(\xi)\;\;\;\;\xi\in\;\;\Omega,\\
&\partial_\nu m^{(N)}=\partial_\nu\Delta
m^{(N)}=0\;\;\;\;\mbox{on}\;\;\partial\Omega.
\end{split}
\end{equation}
Define
$R:=m^\eps-m^{(N)}$. Then integrating $R$ in space,  by \eqref{sysch} and \eqref{syscha} and the fact that
$R(\xi,0)=0$, we obtain
\begin{equation}\label{thm1}
\begin{split}
\int_\Omega
R(\xi,t)d\xi&=\int_\Omega\Big{(}m^\eps-m^{(N)}\Big{)}d\xi=\int_\Omega  d\xi \int_0^t\partial_s\Big{(}m^\eps-m^{(N)}\Big{)}ds \\
&=\int_\Omega d\xi \int_0^t\Big{(}\Delta\mu^\eps+G_1-\Delta\mu^{(N)}-\sum_{j=0}^{N-1}\eps^jG_{1,j}\Big{)}ds \\
&=\int_0^t ds
\int_\Omega\Delta\Big{[}\mu^\eps-\mu^{(N)}\Big{]}d\xi +
\e^N \int_0^t ds \int_\Omega\Big{[} G_{1,N}\Big{]}d\xi ds\\
&=\int_0^t ds
\int_\Omega\Delta\Big{[}\mu^\eps-\mu^{(N)}\Big{]}d\xi =0,
\end{split}
\end{equation}
since $\partial_\nu[\mu^\eps-\mu^{(N)}]=0$ on $\partial\Omega$,
 and \eqref {D.10} holds.
Note that we need (and used for the above) $\partial_\nu
G_2=\partial_\nu R^{(N)}=0$ on $\partial\Omega$ in order to have
$\partial_\nu[\mu^\eps-\mu^{(N)}]=0$ on $\partial\Omega$.
 In addition, we need the
residual $R^{(N)}$ satisfying a Neumann condition i.e.
$$\partial_\nu R^{(N)}=0\;\;\;\;\mbox{on}\;\;\partial\Omega$$
which is true by construction, cf. system \eqref{v.5} and the b.c.
on system solutions in the statement of Theorem \ref{1}.

Hence, since by \eqref{thm1}, for any $t\in[0,T]$ $\int_\Omega
R(\xi,t)d\xi=0$ then there exists unique $\psi(\xi,t)$ such that
\begin{equation}\label{thm2}
\begin{split}
&-\Delta\psi(\cdot,t)=R(\cdot,t)\;\;\;\;\mbox{in}\;\;\Omega,\\
&\partial_\nu\psi(\cdot,t)=0\;\;\;\;\mbox{on}\;\;\partial\Omega,\\
&\int_\Omega\psi(\cdot,t)=0\;\;\;\;\mbox{for any}\;\;t\in[0,T].
\end{split}
\end{equation}
At this point one can continue the proof as in \cite {A-B-C}. 
\qed

\section*{Acknowledgments}
DA  and GK are supported by `Aristeia' (Excellence) grant, ${\rm
\Sigma\Pi A}$ 00086. GK has been partially supported by the
European Union’s Seventh Framework Programme
(FP7-REGPOT-2009-1) under grant agreement no. 245749
‘Archimedes Center for Modeling, Analysis and Computation’
(University of Crete, Greece).  EO has been supported by  MIUR - PRIN 2011-2013. 

\end{document}